\title{Mass-lumping discretization and solvers\\ for distributed elliptic optimal control problems} 
\author{Ulrich~Langer\footnote{Institute of Numerical Mathematics,
		Johannes Kepler University Linz, 
		and Johann Radon Institute for Computational and Applied Mathematics 
		of the Austrian Academy of Sciences,
		Altenberger Stra{\ss}e 69, 4040 Linz,
		Austria, Email: ulanger@numa.uni-linz.ac.at},
	\;
	Richard~L\"oscher\footnote{Institut f\"{u}r Angewandte Mathematik,
		Technische Universit\"{a}t Graz, Steyrergasse 30, 8010 Graz, Austria,
		Email: loescher@math.tugraz.at}, 
	\; Olaf~Steinbach\footnote{Institut f\"{u}r Angewandte Mathematik,
		Technische Universit\"{a}t Graz, Steyrergasse 30, 8010 Graz, Austria,
		Email: o.steinbach@tugraz.at}, 
	\; Huidong~Yang\footnote{Faculty of Mathematics, University of Vienna,
		Oskar--Morgenstern--Platz 1, 1090 Wien, Austria, and Christian
		Doppler Laboratory for Mathematical Modeling and Simulation of Next
		Generations of Ultrasound Devices (MaMSi),
		Oskar--Morgenstern--Platz 1, 1090 Wien, Austria, Email: huidong.yang@univie.ac.at}
}  
\date{}
\newcommand{\norm}[1]{\|#1\|}
\newcommand{\skpr}[1]{\left\langle #1\right\rangle}
\newtheorem{remark}{Remark}
\providecommand{\keywords}[1]{\textbf{\textit{Keywords:}} #1}
\newtheorem{theorem}{Theorem}
\newtheorem{lemma}{Lemma}
\begin{document}
	
	\maketitle
%
%%%%%%%%%%%%%%%%%%%%%%%%%%%%%%%%%%%%%%%%%%%%%%%%%%%%
%
\begin{abstract}
The purpose of this paper is to investigate the effects of the use of mass-lumping 
in the finite element discretization of the reduced first-order optimality
system arising from a standard tracking-type, distributed 
elliptic optimal control problem with $L_2$ regularization.
We show that mass-lumping will not affect the $L_2$ error 
between the desired state and the computed state,
but will lead to a Schur-complement system
that allows for a fast matrix-by-vector multiplication.
We show that the use of the Schur-Complement Preconditioned Conjugate Gradient method 
in a nested iteration setting leads to an asymptotically optimal solver
with respect to the complexity.\\[2ex]
%
%For the optimal choice of the regularization parameter, the lumped mass-matrix 
%can be used as preconditioner in the Schur-complement conjugate gradient solver.
%In a nested iteration setting, the Schur-Complement Preconditioned Conjugate Gradient
%Method produces state approximations possessing the same approximation quality 
%as the exact solution of the Schur-complement system in assymptotically optimal complexity.
%
\keywords{Elliptic optimal control problems, 
\and $L_2$ regularization,
\and finite element discretization,
\and mass lumping,
\and preconditioned conjugate gradient method, 
\and nested iteration.}
\end{abstract}
%
%%%%%%%%%%%%%%%%%%%%%%%%%%%%%%%%%%%%%%%%%%%%%%%%%%%%%%%%%%%%%%%%%%%%%%%%%%%%%%%%%%%%%
%
%\input{References}
%
%%%%%%%%%%%%%%%%%%%%%%%%%%%%%%%%%%%%%%%%%%%%%%%%%%%%%%%%%%%%%%%%%%%%%%%%%%%%%%%%%%%%%
%
\section{Introduction}
\label{LLSY:sec:Introduction}	
%
%\red{{\bf UL:} More references have to be added and discussed !\\ }
\noindent
We consider the following tracking-type, distributed elliptic optimal control problem with standard 
$L_2$ regularization: 
find the state $y_\varrho \in Y=H_0^1(\Omega)$ and the control $u_\varrho \in U=L_2(\Omega)$  
minimizing the cost functional
\begin{equation}
\label{LLSY:Eqn:CostFunctional}
J(y_\varrho,u_\varrho) := \frac{1}{2} \, \|y_\varrho - y_d\|_{L_2(\Omega)}^2
+ \frac{\varrho}{2} \, \| u_\varrho  \|_{L_2(\Omega)}^2,
\end{equation}
subject to (s.t.) the elliptic boundary value model problem 
%(BVP)
%
\begin{equation}
\label{LLSY:Eqn:EllipticBVP_SF}
-\Delta y_\varrho = u_\varrho\text{ in }\Omega, \quad
y_\varrho=0\text{ on }\partial\Omega, 
\end{equation}	
for some given desired state (target) $y_d\in L_2(\Omega)$, 
%$y_d \neq 0$ \richard{(why?), see (42)-(43): avoid division by zero}, 
and some regularization parameter $\varrho >0$,
where $\Omega \subset \mathbb{R}^d$, $d=1,2,3$, is a bounded Lipschitz domain 
with the boundary $\partial \Omega$.
We here use the standard notations for Lebesgue and Sobolev spaces. 
Since the state equation \eqref{LLSY:Eqn:EllipticBVP_SF} has a unique solution $y_\varrho \in Y$ 
for every given control $u_\varrho \in U$, the optimal control problem \eqref{LLSY:Eqn:CostFunctional}-\eqref{LLSY:Eqn:EllipticBVP_SF} 
has a unique solution $(y_\varrho,u_\varrho) \in Y \times U$ too; 
see, e.g., \cite{LLSY:Lions:1968a}, \cite{LLSY:HinzePinnauUlbrichUlbrich:2009Book},
%and  
or \cite{LLSY:Troeltzsch:2010a}.
Moreover, the state $y_\varrho$ obviously belongs to $H^\Delta(\Omega)=\{y \in H_0^1(\Omega): \Delta y \in L_2(\Omega)\}$,
and the solution operator $S$ mapping $u_\varrho$ to $y_\varrho$ 
(control-to-state map)
is an isomorphism between $L_2(\Omega)$
and $H^\Delta(\Omega)$.
%UL: S >>> S^{-1}

The finite element (fe) discretization of the reduced  (after elimination of the control $u_\varrho$)  optimality system, 
which defines the solution to  the optimal control problem \eqref{LLSY:Eqn:CostFunctional}-\eqref{LLSY:Eqn:EllipticBVP_SF}, 
leads to the solution of a large-scale symmetric, but indefinite linear system of algebraic equations 
for defining the fe nodal adjoint state vector 
$\mathbf{p}_h \in \mathbb{R}^{n_h}$
and the fe nodal state vector $\mathbf{y}_h \in \mathbb{R}^{n_h}$ such that
\begin{equation}
\label{LLSY:Eqn:DiscreteReducedOptimalitySystem}
  %\begin{bmatrix}
  \begin{pmatrix}
    \varrho^{-1} M_h & K_h\\
   % K_h^\top & -M_h
   K_h & -M_h
  \end{pmatrix}
  %\end{bmatrix}
  \begin{pmatrix}
    \mathbf{p}_h\\
    \mathbf{y}_h
  \end{pmatrix}
  =
  \begin{pmatrix}
    \mathbf{0}_h\\
    -\mathbf{y}_{dh}
  \end{pmatrix},
\end{equation}
where the stiffness matrix $K_h$ and the mass matrix $M_h$ 
are symmetric and positive definite (spd),  
$\mathbf{y}_{dh} \in \mathbb{R}^{n_h}$ is nothing but the fe 
load vector representing  the desired state $y_d$,
and $h$ denotes a suitable discretization parameter.
For fixed $\varrho$, discretization error estimates can be found, e.g.,  in 
\cite{LLSY:HinzePinnauUlbrichUlbrich:2009Book}.
%\red{\bf [REF]}.
There is
%many 
a huge number of
publications on efficient preconditioned iterative solvers 
for symmetric, but indefinite systems in general; see, e.g.,
%\uli{ 
the unified approach proposed in \cite{LLSY:Zulehner:2001MathComp},
the survey paper \cite{LLSY:BenziGolubLiesen:2005ActaNumer}, 
the review article \cite{LLSY:MardalWinther:2011NLA}, 
the books \cite{LLSY:ElmanSilvesterWathen:2005Book} and \cite{LLSY:BaiPan:2021Book},
the more recent papers 
%%\red{\bf [REF]}
%\cite{LLSY:Notay:2019SIMAX},
\cite{LLSY:AxelssonKaratson:2020NumerMath,LLSY:Bai:2019IMANA,
LLSY:BaiBenzi:2017BIT,LLSY:Notay:2019SIMAX},
and the literature cited therein.
%}
%the survey papers \cite{LLSY:BenziGolubLiesen:2005ActaNumer} and  \cite{LLSY:MardalWinther:2011NLA}.
%\cite{LLSY:BenziGolubLiesen:2005ActaNumer,LLSY:MardalWinther:2011NLA}.
%
Special iterative solvers for discrete optimality systems such as
\eqref{LLSY:Eqn:DiscreteReducedOptimalitySystem}
should be not only robust with respect to (wrt) the mesh refinement quantified by the 
discretization parameter $h$ but also wrt the regularization parameter $\varrho$
that can be quite small depending on the cost that we are willing to pay.
Such kind of $h$ and $\varrho$ robust preconditioned iterative methods have been 
proposed and investigated in 
\cite{LLSY:AxelssonKaratson:2020NumerMath,
LLSY:BaiBenziChenWang:2013IMANA,
LLSY:PearsonWathen:2012NLA,
LLSY:SchoeberlZulehner:2007SIMAX,
LLSY:Zulehner:2011SIMAX};
%LLSY:ReesDollarWathen:2010SISC, >>> not \varrho robust !
see also
\cite{LLSY:AxelssonNeytchevaStroem:2018JNM,
LLSY:DravinsNeytcheva:2021-003-nc,
LLSY:PearsonStollWathen:2014NLA,
LLSY:SchielaUlbrich:2014SIOPT,
LLSY:StollWathen:2012NLA},
for handling control and state constraints,
and the references therein. 
Alternatively, we can use all-at-once multigrid methods to solve 
saddle-point problems such as
\eqref{LLSY:Eqn:DiscreteReducedOptimalitySystem} efficiently;
see, e.g.,  \cite{LLSY:SchulzWittum:2008CVS}
and the review paper \cite{LLSY:BorziSchulz:2009SIAMreview}.

In this paper, we are interested in the case $\varrho=h^4$ leading to asymptotically 
optimal balanced estimates of the $L_2$-error between the desired state $y_d$ 
and the computed finite element state $y_{\varrho h}$ that is related to 
$\mathbf{y}_h$ by the fe isomorphism; 
see \cite{LLSY:LangerLoescherSteinbachYang:2023}.
Asymptotically optimal preconditioned iterative solvers for the saddle-point system 
\eqref{LLSY:Eqn:DiscreteReducedOptimalitySystem}
were proposed in 
\cite{LLSY:LangerLoescherSteinbachYang:2023} and \cite{LLSY:LangerLoescherSteinbachYang:2023LSSC}
for constant and variable $L_2$ regularizations, respectively.
%{\bf [REF]}. 
More precisely, it turns out that very cheap preconditioners 
for the MINRES and BP-CG can be constructed on the basis of simple diagonal 
approximations of the mass matrix $M_h$. 
Of course, we can further reduce the saddle-point system \eqref{LLSY:Eqn:DiscreteReducedOptimalitySystem}
to the Schur-Complement (SC) system
\begin{equation}
 \label{LLSY:Eqn:SchurComplementSystemFullM}
		(\varrho K_h M_h^{-1}K_h +M_h) \mathbf{y}_h = \mathbf{y}_{dh} 
\end{equation}
by eliminating the adjoint state $\mathbf{p}_h$. The system matrix is spd, 
and we would like to solve this system by means of the 
Preconditioned Conjugate Gradient (PCG) method. 
Although we can use very cheap diagonal matrices $D_h$ 
such as $\mbox{diag}(M_h)$ or the lumped mass matrix $\mbox{lump}(M_h)$ 
as preconditioners that are spectrally equivalent to the Schur 
complement $\varrho K_h M_h^{-1}K_h +M_h$ for $\varrho=h^4$ 
\cite{LLSY:LangerLoescherSteinbachYang:2023LSSC,LLSY:LangerLoescherSteinbachYang:2023},
we cannot simply replace the mass matrix $M_h^{-1}$ by the lumped mass matrix $(\mbox{lump}(M_h))^{-1}$ 
in the Schur complement without a precise analysis of the impact of this replacement to the discretization error.
In Section~\ref{LLSY:sec:MassLumpingAndErroAnalysis}, we just provide this analysis,
and show that, in the case of continuous, piecewise linear (Courant's) 
finite element spaces, the discretization error is not affected at all. 
This theoretical result is supported by our numerical results 
presented in Section~\ref{LLSY:sec:NumericalResults}.
Now we have to solve the mass-lumped 
SC system
%Schur-complement system
%
\begin{equation}
 \label{LLSY:Eqn:SchurComplementSystemLumpedMass}
 (\varrho K_h (\mbox{lump}(M_h))^{-1}K_h +M_h) \hat{\mathbf{y}}_h
 = \mathbf{y}_{dh} 
\end{equation}
instead of the original 
%Schur-Complement 
SC system \eqref{LLSY:Eqn:SchurComplementSystemFullM}. 
Using the  diagonal preconditioner
$D_h = \mbox{lump}(M_h)$, we can now solve \eqref{LLSY:Eqn:SchurComplementSystemLumpedMass} 
in asymptotically optimal complexity for some fixed relative accuracy. 
In Section~\ref{LLSY:sec:NestedPCGIteration}, we show how we can use 
this SC-PCG in a nested iteration setting in order to compute a fe approximation to the desired state $y_d$,
which differs from $y_d$ in the $L_2$-norm in the order of the discretization error,
with asymptotically optimal complexity $O(n_h)$. 
These theoretical results are again quantitatively illustrated 
by numerical experiments in Section~\ref{LLSY:sec:NumericalResults}.

%
%%%%%%%%%%%%%%%%%%%%%%%%%%%%%%%%%%%%%%%%%%%%%%%%%%%%%%%%%%%%%%%%%%%%%%%%%%%%%%%%%%%%%
%
%\section{Mass-Lumping and Discretization Error  Analysis}
\section{Mass-Lumping and Error  Analysis}
\label{LLSY:sec:MassLumpingAndErroAnalysis}	
%	
%The purpose of this note is to give an error analysis of the form $\norm{\hat y_{\varrho h}-y_d}_{L_2(\Omega)}$, %where $\hat y_{\varrho h}$ is a approximate solution of the optimal control problem to minimize 
%	\begin{align}\label{eq:optimal-control-problem}
%		\mathcal{J}(y_\varrho,u_\varrho) = \int_\Omega [y_\varrho-y_d]^2\, dx +\frac{\varrho}{2}\norm{u_\varrho}_{L_2(\Omega)}^2,\quad \text{s.t.} -\Delta y_\varrho = u_\varrho\text{ in }\Omega, \, y_\varrho=0\text{ on }\partial\Omega, 
%	\end{align}
%	for given $y_d\in L_2(\Omega)$ and regularization parameter $\varrho >0$.  
	
	The first-order optimality system, derived from 
	\eqref{LLSY:Eqn:CostFunctional}--\eqref{LLSY:Eqn:EllipticBVP_SF}, 
	%reads as follows:
	is given by 
	%the following equations:
	the equations
	\begin{equation}
        %\label{eq:optimality-system1}
       % -\Delta y_\varrho(x) = u_\varrho(x),\, -\Delta p_\varrho(x) = y_\varrho(x)-y_d(x),\, 
		%	 p_\varrho(x) +\varrho u_\varrho(x)=0,\quad x\in \Omega,\\
		 \label{LLSY:Eqn:optimality-system1}
		-\Delta y_\varrho = u_\varrho,\; -\Delta p_\varrho = y_\varrho-y_d,\;\mbox{and}\; 		
			 p_\varrho +\varrho u_\varrho=0 \quad \mbox{in}\; \Omega,	 
	\end{equation} 
	%
	%and boundary conditions 
	with the boundary conditions 
	\begin{equation} %%\label{eq:optimality-system1}
	\label{LLSY:Eqn:optimality-system2}
		%y_\varrho(x) = 0,  p_\varrho(x)=0,\quad x\in\partial \Omega.
		y_\varrho = 0 \;\mbox{and}\;p_\varrho=0 \quad \mbox{on}\; \partial \Omega.
	\end{equation}
	%
	%When eliminating the control $u_\varrho$ we have to find $(y_\varrho,p_\varrho)\in H_0^1(\Omega)\times H_0^1(\Omega)$ such that 
	%\begin{equation}\label{eq:VF-optimality-system}
	%	\begin{aligned}
	%		&\forall q\in H_0^1(\Omega):\,& \frac{1}{\varrho}\skpr{p_\varrho,q}_{L_2(\Omega)}&+&\skpr{\nabla y_\varrho,\nabla q_{L_2(\Omega)} &=& 0\\
	%		&\forall v\in H_0^1(\Omega):\,& -\skpr{\nabla p_\varrho,\nabla v}_{L_2(\Omega)}&+&\skpr{y_\varrho,v}_{L_2(\Omega)} &=& \skpr{y_d,v}_{L_2(\Omega)}.
	%	\end{aligned}
	%\end{equation}
	%
	Eliminating the control $u_\varrho$, we arrive at the reduced first-order optimality 
	system, the variational form of which reads as follows: 
	find $(y_\varrho,p_\varrho)\in H_0^1(\Omega)\times H_0^1(\Omega)$ such that 
	%
    %\begin{equation}\label{eq:VF-optimality-system}
	%	\begin{aligned}
	%		%&\forall q\in H_0^1(\Omega):\,& 
	%		\frac{1}{\varrho}\skpr{p_\varrho,q}_{L_2(\Omega)}&+&\skpr{\nabla y_\varrho,\nabla q}_{L_2(\Omega)} &=& 0, 
	%		\quad \forall q\in H_0^1(\Omega),\\
			%&\forall v\in H_0^1(\Omega):\,& 
	%		-\skpr{\nabla p_\varrho,\nabla v}_{L_2(\Omega)}&+&\skpr{y_\varrho,v}_{L_2(\Omega)} &=& \skpr{y_d,v}_{L_2(\Omega)},
	%		\quad \forall v\in H_0^1(\Omega).
	%	\end{aligned}
	%\end{equation}
	%
	%
    \begin{eqnarray} %\label{eq:VF-optimality-system}
    %\eqref{LLSY:Eqn:VF-optimality-system1} - \eqref{LLSY:Eqn:VF-optimality-system1}
    \label{LLSY:Eqn:VF-optimality-system1}
        \frac{1}{\varrho}\skpr{p_\varrho,q}_{L_2(\Omega)} + \skpr{\nabla y_\varrho,\nabla q}_{L_2(\Omega)} 
            &=& 0, \;\; \forall q\in H_0^1(\Omega),\\ 
     \label{LLSY:Eqn:VF-optimality-system2}       
        -\skpr{\nabla p_\varrho,\nabla v}_{L_2(\Omega)} + \skpr{y_\varrho,v}_{L_2(\Omega)} 
            &=& \skpr{y_d,v}_{L_2(\Omega)},
			\;\; \forall v\in H_0^1(\Omega).
    \end{eqnarray}
	Introducing the variable $\tilde p_\varrho = \frac{1}{\sqrt{\varrho}}p_\varrho$, we further derive the scaled system
	%
	%\begin{equation}\label{eq:VF-optimality-system-scaled}
	%	\begin{aligned}
	%		&\forall q\in H_0^1(\Omega):\,& \frac{1}{\sqrt{\varrho}}\skpr{\tilde p_\varrho,q}_{L_2(\Omega)}&+&\skpr{\nabla y_\varrho,\nabla q}_{L_2(\Omega)} &=& 0,\\
	%		&\forall v\in H_0^1(\Omega):\,& -\skpr{\nabla \tilde p_\varrho,\nabla v}_{L_2(\Omega)}&+&\frac{1}{\sqrt{\varrho}}\skpr{y_\varrho,v}_{L_2(\Omega)} &=& \frac{1}{\sqrt{\varrho}}\skpr{y_d,v}_{L_2(\Omega)}.
	%	\end{aligned}
	%\end{equation}
	%
	\begin{eqnarray}%\label{eq:VF-optimality-system-scaled}
	\label{LLSY:Eqn:VF-optimality-system-scaled1}
	\frac{1}{\sqrt{\varrho}}\skpr{\tilde p_\varrho,q}_{L_2(\Omega)} + \skpr{\nabla y_\varrho,\nabla q}_{L_2(\Omega)} &=& 0, 
	\;\; \forall q\in H_0^1(\Omega),\\
	\label{LLSY:Eqn:VF-optimality-system-scaled2}
	-\skpr{\nabla \tilde p_\varrho,\nabla v}_{L_2(\Omega)} + \frac{1}{\sqrt{\varrho}}\skpr{y_\varrho,v}_{L_2(\Omega)} &=& \frac{1}{\sqrt{\varrho}}\skpr{y_d,v}_{L_2(\Omega)}, \;\; \forall v\in H_0^1(\Omega).
	%  \eqref{eq:VF-optimality-system-scaled} = \eqref{LLSY:Eqn:VF-optimality-system-scaled1} - \eqref{LLSY:Eqn:VF-optimality-system-scaled2}
	\end{eqnarray}
	%From now on, we assume 
	For simplicity, we assume from now on
	that $\Omega\subset \mathbb{R}^d$ is polygonally ($d=2$) or polyhedrally ($d=3$) bounded. 
	Let $\mathcal{T}_h=\{\tau_e\}_{e=1}^{N_h}$ be an admissible,
%\olaf{globally/locally?}\richard{ globally (we need a global inverse inequality as in the case of $L^2$-control)} 
      globally  
    quasi-uniform and shape-regular decomposition of $\Omega$ into simplicies $\tau_e$, with the mesh-size $h_e=|\tau_e|^{1/d}$, 
    such that 
	$\overline{\Omega}=\bigcup_{e=1}^{N_h}\overline{\tau}_e$. 
	Let $S_h^1(\mathcal{T}_h)=\text{span}\{\varphi_j^h\}_{j=1}^{\overline
          n_h}$ denote the space of piecewise linear, globally continuous
        functions
        %\olaf{underlying $\mathcal{T}_h$ ?weglassen?} 
        spanned by the Lagrange basis functions $\varphi_j^h$ (hat functions), which fulfil the equations
	%\red{[Here we should use another subindex since we need $\ell$ as level indes !]}{\color{blue} resolved!}
	\begin{align}\label{eq:properties-phijh}
		%\forall x\in \Omega:\, 
		\sum_{j=1}^{\overline{n}_h} \varphi_j^h(x) =1
		\;\; \forall x\in \Omega,
		\quad \text{and}\quad \varphi_j^h(x_i) = \delta_{i,j}\text{ for each node }x_i,\, i=1,\ldots,\overline{n}_h. 
	\end{align} 
	Further, we define $V_{h}:=S_h^1(\mathcal{T}_h)\cap H_0^1(\Omega)=\text{span}\{\varphi_j^h\}_{j=1}^{n_h}$, 
	where we assume that the ordering of the basis functions is such that
        the indices $j=1,\ldots,n_h$ correspond to vertices $x_j\in \Omega$ and
        $j=n_h+1,\ldots,\overline{n}_h$ corresponds to the vertices on the
        boundary, $x_j\in\partial \Omega$. 
	%\uli{
	We refer to the books
 \cite{LLST:Braess:2007a,LLSY:ErnGuermond:2004,LLSY:Steinbach:2008Monograph}    
 for more details on standard finite element discretizations
 of elliptic PDEs.
	%}
	
	A conforming discretization of 
	%\eqref{eq:VF-optimality-system-scaled} 
	\eqref{LLSY:Eqn:VF-optimality-system-scaled1}-\eqref{LLSY:Eqn:VF-optimality-system-scaled2}
	is then to find $(y_{\varrho h},\tilde p_{\varrho h})\in V_h\times V_h$ such that
	%
	%\begin{equation}\label{eq:DVF-optimality-system-scaled}
	%	\begin{aligned}
	%		&\forall q_h\in V_h:\,& \frac{1}{\sqrt{\varrho}}\skpr{\tilde p_{\varrho h},q_h}_{L_2(\Omega)}&+&\skpr{\nabla y_{\varrho h},\nabla q_h}_{L_2(\Omega)} &=& 0\\
	%		&\forall v_h\in V_h:\,& -\skpr{\nabla \tilde p_{\varrho h},\nabla v_h}_{L_2(\Omega)}&+&\frac{1}{\sqrt{\varrho}}\skpr{y_{\varrho h},v_h}_{L_2(\Omega)} &=& \frac{1}{\sqrt{\varrho}}\skpr{y_d,v_h}_{L_2(\Omega)}.
	%	\end{aligned}
	%\end{equation}
	%
	\begin{eqnarray}%\label{eq:DVF-optimality-system-scaled}
	\label{LLSY:Eqn:DVF-optimality-system-scaled1}
	\frac{1}{\sqrt{\varrho}}\skpr{\tilde p_{\varrho h},q_h}_{L_2(\Omega)}+ \skpr{\nabla y_{\varrho h},\nabla q_h}_{L_2(\Omega)} 
     &=& 0, \;\; \forall q_h\in V_h, \\
     \label{LLSY:Eqn:DVF-optimality-system-scaled2}
	 -\skpr{\nabla \tilde p_{\varrho h},\nabla v_h}_{L_2(\Omega)} + \frac{1}{\sqrt{\varrho}}\skpr{y_{\varrho h},v_h}_{L_2(\Omega)} 
	 &=& \frac{1}{\sqrt{\varrho}}\skpr{y_d,v_h}_{L_2(\Omega)}, \;\;\forall v_h\in V_h.
	 % \eqref{eq:DVF-optimality-system-scaled} = \eqref{LLSY:Eqn:DVF-optimality-system-scaled1}-\eqref{LLSY:Eqn:DVF-optimality-system-scaled2}
\end{eqnarray}
In \cite{LLSY:LangerLoescherSteinbachYang:2023}, we were able to show
the following result for the $L_2$ error between the desired state $y_d$
and the computed finite element state $y_{\varrho h}$.
\begin{theorem}[{\cite[Corollary~1]{LLSY:LangerLoescherSteinbachYang:2023}}]
  \label{thm:error-yy_d}
  Let $(y_{\varrho h},\tilde p_{\varrho h})\in V_h\times V_h$ be the unique
  solution of the coupled finite element variational formulation
  \eqref{LLSY:Eqn:DVF-optimality-system-scaled1}-\eqref{LLSY:Eqn:DVF-optimality-system-scaled2}. 
  Let $y_d\in H_0^s(\Omega)$ for $s\in[0,1]$ or
  $y_d\in H^s(\Omega)\cap H_0^1(\Omega)$ for $s\in (1,2]$. Then 
  \begin{align*}
    \norm{y_{\varrho h}-y_d}_{L_2(\Omega)} \leq c \, h^s \, \norm{y_d}_{H^s(\Omega)},
  \end{align*}
  provided that $\varrho = h^4$. 
\end{theorem}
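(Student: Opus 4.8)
The plan is to bound the total error by the triangle inequality,
\begin{equation*}
  \norm{y_{\varrho h}-y_d}_{L_2(\Omega)} \le \norm{y_\varrho-y_d}_{L_2(\Omega)} + \norm{y_{\varrho h}-y_\varrho}_{L_2(\Omega)},
\end{equation*}
splitting it into a \emph{regularization error} between the continuous optimal state and the target, and a \emph{discretization error} between the discrete and the continuous optimal state. The choice $\varrho=h^4$ is exactly the one that balances the two contributions so that each becomes $O(h^s\norm{y_d}_{H^s(\Omega)})$.

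For the regularization error I would pass to the spectral basis. Let $(\lambda_k,\phi_k)_{k\ge1}$ be the $L_2(\Omega)$-normalised Dirichlet eigenpairs of $-\Delta$, expand $y_d=\sum_k y_{d,k}\phi_k$ with $\norm{y_d}_{H^s(\Omega)}^2\simeq\sum_k\lambda_k^s y_{d,k}^2$. Eliminating $u_\varrho$ and $p_\varrho$ from \eqref{LLSY:Eqn:optimality-system1}--\eqref{LLSY:Eqn:optimality-system2} yields the closed equation $(I+\varrho(-\Delta)^2)\,y_\varrho=y_d$, hence $y_{\varrho,k}=(1+\varrho\lambda_k^2)^{-1}y_{d,k}$ and
\begin{equation*}
  \norm{y_\varrho-y_d}_{L_2(\Omega)}^2=\sum_{k\ge1}\Big(\frac{\varrho\lambda_k^2}{1+\varrho\lambda_k^2}\Big)^2 y_{d,k}^2 .
\end{equation*}
The proof then reduces to the elementary scalar bound $\big(t/(1+t)\big)^2\le t^{s/2}$ for $t\ge0$ and $s\in[0,4]$ (treat $t\le1$ and $t\ge1$ separately), applied with $t=\varrho\lambda_k^2$. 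This gives $\norm{y_\varrho-y_d}_{L_2(\Omega)}\le\varrho^{s/4}\norm{y_d}_{H^s(\Omega)}=h^s\norm{y_d}_{H^s(\Omega)}$ once $\varrho=h^4$; the range $s\in[0,4]$ available here is later cut down to $s\in[0,2]$ by the finite element approximation order.

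For the discretization error I would use that \eqref{LLSY:Eqn:DVF-optimality-system-scaled1}--\eqref{LLSY:Eqn:DVF-optimality-system-scaled2} is a conforming Galerkin discretization of the symmetric, indefinite saddle-point form
\begin{equation*}
  \mathcal{B}\big((\tilde p,y),(q,v)\big):=\tfrac{1}{\sqrt{\varrho}}\skpr{\tilde p,q}_{L_2(\Omega)}+\skpr{\nabla y,\nabla q}_{L_2(\Omega)}+\skpr{\nabla \tilde p,\nabla v}_{L_2(\Omega)}-\tfrac{1}{\sqrt{\varrho}}\skpr{y,v}_{L_2(\Omega)} ,
\end{equation*}
obtained after changing the sign of the second equation. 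The key is the $\varrho$-weighted norm $\|(v,q)\|_\varrho^2:=\varrho^{-1/2}\norm{v}_{L_2(\Omega)}^2+|v|_{H^1(\Omega)}^2+\varrho^{-1/2}\norm{q}_{L_2(\Omega)}^2+|q|_{H^1(\Omega)}^2$, in which I would prove boundedness (immediate by Cauchy--Schwarz) together with an inf-sup stability estimate for $\mathcal{B}$ with constants independent of $h$ and $\varrho$; stability follows by the usual device of testing with a suitable linear combination of the arguments. Galerkin orthogonality (from $V_h\subset H_0^1(\Omega)$) and C\'ea's lemma then give quasi-optimality in $\|\cdot\|_\varrho$, which I would bound by the interpolation error of $(\tilde p_\varrho,y_\varrho)$ in $V_h\times V_h$. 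Extracting the weight $\varrho^{-1/2}\norm{y_{\varrho h}-y_\varrho}_{L_2(\Omega)}^2$ and inserting the approximation estimates leaves $\norm{y_{\varrho h}-y_\varrho}_{L_2(\Omega)}\le c\,h^s\norm{y_d}_{H^s(\Omega)}$ after setting $\varrho=h^4$.

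I expect this last balancing step to be the main obstacle. The interpolation errors carry powers of $\varrho^{-1/2}$, and the Sobolev norms of the continuous solution $(\tilde p_\varrho,y_\varrho)$ themselves degenerate as $\varrho\to0$ — for instance $\norm{y_\varrho}_{H^2(\Omega)}$ grows like a negative power of $\varrho$ when $y_d$ has low regularity. The crux is therefore to re-derive, again from $y_{\varrho,k}=(1+\varrho\lambda_k^2)^{-1}y_{d,k}$ and the analogous representation of $\tilde p_\varrho$, sharp $\varrho$-dependent bounds of the type $\varrho^{-1/2}h^{2\sigma}\|y_\varrho\|_{H^\sigma(\Omega)}^2\le c\,h^{2s}\norm{y_d}_{H^s(\Omega)}^2$, so that every term collapses to the single target rate precisely at $\varrho=h^4$. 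Since this argument is purely spectral, it requires no elliptic regularity beyond that carried by $y_d$ itself, which is what permits the result on general polygonal or polyhedral domains and for the stated fractional range of $s$.
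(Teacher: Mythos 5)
Your overall architecture is sound and matches the skeleton of the proof in the cited reference: split $\norm{y_{\varrho h}-y_d}_{L_2(\Omega)}$ by the triangle inequality into a regularization error $\norm{y_\varrho-y_d}_{L_2(\Omega)}$ and a discretization error $\norm{y_{\varrho h}-y_\varrho}_{L_2(\Omega)}$, prove C\'ea-type quasi-optimality for the saddle-point form in the $\varrho$-weighted norm $\varrho^{-1/2}\norm{\cdot}_{L_2(\Omega)}^2+|\cdot|_{H^1(\Omega)}^2$ (which for $\varrho=h^4$ is exactly the $h^{-2}$-weighted norm recalled in the proof of Theorem~\ref{thm:h1h2error-lumped}), and close the argument with $\varrho$-robust bounds on the continuous solution. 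Where you genuinely diverge is in how the regularization error and these solution bounds are obtained: you work in the Dirichlet eigenbasis, writing $y_{\varrho,k}=(1+\varrho\lambda_k^2)^{-1}y_{d,k}$ and using the elementary inequality $\bigl(t/(1+t)\bigr)^2\le t^{s/2}$, which is correct and delivers all fractional orders $s$ in one stroke. The paper instead proves the endpoint estimates by energy arguments --- testing the optimality system and integrating by parts, as recalled in Lemma~\ref{lem:regularization-estimates}, giving $\norm{y_\varrho-y_d}_{L_2(\Omega)}\le\sqrt{\varrho}\,\norm{\Delta y_d}_{L_2(\Omega)}$ and $\norm{\Delta y_\varrho}_{L_2(\Omega)}\le\norm{\Delta y_d}_{L_2(\Omega)}$ --- and then interpolates between $s=0,1,2$ (cf.\ the proof of Theorem~\ref{thm:hserror-lumped}). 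The spectral route buys directness but obliges you to justify $\sum_k\lambda_k^s y_{d,k}^2\le c\,\norm{y_d}_{H^s(\Omega)}^2$, i.e.\ the embedding of $H^s(\Omega)\cap H_0^1(\Omega)$ (resp.\ $H_0^s(\Omega)$) into the spectral interpolation scale, which is delicate near $s=1/2$, $s=3/2$ and on nonconvex domains where $H^\Delta(\Omega)\neq H^2(\Omega)$; the energy route avoids eigenfunction expansions and is the one that generalizes to non-self-adjoint state equations. Two further points to watch: (i) for $s=0$ your C\'ea/interpolant argument does not produce $\norm{y_{\varrho h}-y_\varrho}_{L_2(\Omega)}\le c\,\norm{y_d}_{L_2(\Omega)}$, since the $H^1$-seminorms of $y_\varrho$ and $\tilde p_\varrho$ are not controlled by $\norm{y_d}_{L_2(\Omega)}$ uniformly in $\varrho$; this endpoint is obtained directly by testing the discrete system, exactly as in Lemma~\ref{lem:l2-error-lumped}, and then interpolated with the higher-order cases. (ii) The ``sharp $\varrho$-dependent bounds'' you defer to at the end are precisely the content of estimates such as \eqref{eq:thm1-ineq3} and \eqref{eq:thm1-ineq4}; you should make explicit that it is $\norm{\nabla\tilde p_\varrho}_{L_2(\Omega)}\le c\,|y_d|_{H^1(\Omega)}$ and $\norm{\nabla\tilde p_\varrho}_{L_2(\Omega)}\le\sqrt{\varrho}^{1/2}$-type gains on the adjoint variable, not on $y_\varrho$ alone, that make every term collapse to the rate $h^s$ at $\varrho=h^4$.
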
 

\noindent
We recall that $v_h\in V_h$ can be represented in the form 
\begin{align}\label{eq:representation-in-Vh}
  v_h(x) = \sum_{i=1}^{n_h} v_i\varphi_i^h(x),
\end{align}
where $v_i = v_h(x_i)$. Thus, we can associate each finite element
function with its coefficient vector via the finite element isomorphism
$v_h \leftrightarrow \mathbf{v}$, where $\mathbf{v}[i] = v_i. $
With this, the matrix system corresponding to the fe scheme
\eqref{LLSY:Eqn:DVF-optimality-system-scaled1}-\eqref{LLSY:Eqn:DVF-optimality-system-scaled2} 
can be written in the form: find 
$(\mathbf{y}_h,\mathbf{\tilde p}_h) \in \mathbb{R}^{n_h} \times \mathbb{R}^{n_h}$
such that 
\begin{align}\label{eq:matrix-system}
  \begin{pmatrix}
    \frac{1}{\sqrt{\varrho}}M_h & K_h\\
    - K_h^\top & \frac{1}{\sqrt{\varrho}}M_h
  \end{pmatrix}\begin{pmatrix}
    \mathbf{\tilde p}_h\\ \mathbf{y}_h
  \end{pmatrix} =
  \begin{pmatrix}
    \mathbf{0} \\ \frac{1}{\sqrt{\varrho}}\mathbf{y}_{dh}
  \end{pmatrix},
\end{align}
where $M_h$ and $K_h$ denote the mass resp. stiffness matrix with the entries
\begin{align*}
  M_h[i,j] = \int_\Omega \varphi_j^h(x)\varphi_i^h(x)\, dx\quad
  \text{and}\quad K_h[i,j]=\int_\Omega\nabla \varphi_j^h(x)\cdot
  \nabla\varphi_i^h(x) \, dx,
\end{align*}
and the load vector
\begin{align*}
  \mathbf{y}_{dh}[i] = \int_\Omega y_d(x)\varphi_i^h(x)\, dx.
\end{align*}
We note that the system \eqref{eq:matrix-system} is equivalent to
\eqref{LLSY:Eqn:SchurComplementSystemFullM}. Moreover, when eliminating
$\mathbf{\tilde p}_h$ resp. $\mathbf{p}_h$, we arrive at the same 
Schur-complement system \eqref{LLSY:Eqn:SchurComplementSystemFullM}.
    
As already mentioned in the introductionary Section \ref{LLSY:sec:Introduction},
we would like to replace the inverse of the mass matrix $M_h$ in the spd
Schur complement $\varrho K_h M_h^{-1}K_h + M_h$ by the inverse of the lumped
mass matrix $\text{lump}(M_h)$ that is diagonal. 
	%
	%The Schur-complement of the system \eqref{eq:matrix-system} is given as 
	%%\begin{align*}
    %    \begin{align}
	%    \label{LLSY:Eqn:SchurComplementSystem}
	%	\varrho K_h^\top M_h^{-1}K_h\underline y +M_h\underline y = \underline f, 
 	%%\end{align*}
    %    \end{align}
  %for which the analysis for the error and for the solver was done in \cite{LLSY:LangerLoescherSteinbachYang:2023}. Anyway, the numerical results confirmed that we get optimal rates when replacing the mass matrix by its lumped version $\text{lump}(M_h)$, which entries are given as
    %
    The entries of the lumped mass matrix $\text{lump}(M_h)$ are given as
  \begin{align}\label{eq:lumped-mass-matrix}
  	\text{lump}(M_h)[i,j] = \delta_{i,j}\sum_{k=1}^{\overline{n}_h} \widetilde M_h[i,k],\quad i,j=1,\ldots,n_h,
  \end{align}
	where $\widetilde M_h\in\mathbb{R}^{\overline{n}_h}\times \mathbb{R}^{\overline{n}_h}$ denotes the mass matrix on $S_h^1(\mathcal{T}_h)$ with entries 
	$$ \widetilde M_h[i,j]:=\int_\Omega \varphi_j^h(x)\varphi_i^h(x)\, dx,\quad i,j=1,\ldots,\overline{n}_h. $$
	%The Schur complement then reads 
	%
	%$$ \varrho K_h^\top\text{lump}(M_h)^{-1}K_h\underline y +M_h\underline y= \underline f. $$
	The Schur complement system is then given by \eqref{LLSY:Eqn:SchurComplementSystemLumpedMass} 
	with the Schur complement $S_h = \varrho K_h (\text{lump}(M_h))^{-1} K_h + M_h$ as system matrix.
	Unique solvability of the discrete system follows immediately, since $M_h=M_h^\top >0$ is symmetric and positive definite (spd) and 
	%%$K_h^\top \text{lump}(M_h)^{-1}K_h> 0$ 
	$K_h (\text{lump}(M_h))^{-1}K_h> 0$
	is spd, as $\text{lump}(M_h)$ and $K_h$ are spd. 
	
	Now the aim is to show an equivalent result to Theorem \ref{thm:error-yy_d}, when using the lumped mass matrix. 
	We will exploit 
	%using 
	ideas from \cite{LLSY:BeckerHansbo:2008a}. 
	The discrete variational formulation for the lumped case reads as follows: find $(\hat y_{\varrho h},\hat p_{\varrho h})\in V_h\times V_h$ such that
	%
	%\begin{equation}\label{eq:DVF-optimality-system-scaled-lumped}
	%	\begin{aligned}
	%		&\forall q_h\in V_h:\,& \frac{1}{\sqrt{\varrho}}\skpr{\hat p_{\varrho h},q_h}_{h}&+&\skpr{\nabla \hat y_{\varrho h},\nabla q_h}_{L_2(\Omega)} &=& 0\\
	%		&\forall v_h\in V_h:\,& -\skpr{\nabla \hat p_{\varrho h},\nabla v_h}_{L_2(\Omega)}&+&\frac{1}{\sqrt{\varrho}}\skpr{\hat y_{\varrho h},v_h}_{L_2(\Omega)} &=& \frac{1}{\sqrt{\varrho}}\skpr{y_d,v_h}_{L_2(\Omega)},
	%	\end{aligned}
	%\end{equation}
	%
	\begin{eqnarray}%\label{eq:DVF-optimality-system-scaled-lumped}
	\label{LLSY:Eqn:DVF-optimality-system-scaled-lumped1}
	 \frac{1}{\sqrt{\varrho}}\skpr{\hat p_{\varrho h},q_h}_{h} + \skpr{\nabla \hat y_{\varrho h},\nabla q_h}_{L_2(\Omega)} 
	 &=& 0, \;\;\forall q_h\in V_h,\\
	 \label{LLSY:Eqn:DVF-optimality-system-scaled-lumped2}
	-\skpr{\nabla \hat p_{\varrho h},\nabla v_h}_{L_2(\Omega)} + \frac{1}{\sqrt{\varrho}}\skpr{\hat y_{\varrho h},v_h}_{L_2(\Omega)} 
	  &=& \frac{1}{\sqrt{\varrho}}\skpr{y_d,v_h}_{L_2(\Omega)}, \;\; \forall v_h\in V_h,
	  % \eqref{eq:DVF-optimality-system-scaled-lumped} = \eqref{LLSY:Eqn:DVF-optimality-system-scaled-lumped1}-\eqref{LLSY:Eqn:DVF-optimality-system-scaled-lumped2}   
	\end{eqnarray}
	where 
	$\skpr{p_h,q_h}_h = \mathbf{q}_h^\top \text{lump}(M_h)\mathbf{p}_h$ denotes the underintegrated inner product on $L_2(\Omega)$
	%, i.e., the realization of the lumped mass matrix. 
	that is nothing but the realization of the mass lumping.
\begin{lemma}\label{lem:lumped-mass-and-interpolation}
		For $p_h,q_h\in V_h$ the realization of the lumped mass matrix admits the representation 
		\begin{align*}
			\skpr{p_h,q_h}_h = \int_\Omega I_h^1(p_hq_h)\, dx,
		\end{align*} 
	where $I_h^1:\mathcal{C}(\overline \Omega)\to V_h$ denotes the interpolation operator, given as
	\begin{align*}
		%I_h^1v_h(x) = \sum_{i=1}^M v_i\varphi_i^h(x), \, x\in \Omega, \text{ where }v_i=v_h(x_i).
		I_h^1v(x) = \sum_{i=1}^{n_i} v_i\varphi_i^h(x), \, x\in \overline{\Omega},
	\end{align*}
	where $v_i=v(x_i)$, $v \in \mathcal{C}(\overline{\Omega})$.
    Furthermore, it holds that 
    %\olaf{
      \begin{align}\label{eq:bound L2 norm discrete inner product}
	\frac{1}{d+2} \, \norm{p_h}_h^2 \leq
 	\norm{p_h}_{L_2(\Omega)}^2\leq \norm{p_h}_h^2 := \skpr{p_h,p_h}_h \text{ for all } p_h\in V_h. 
    \end{align}
    %}
\end{lemma}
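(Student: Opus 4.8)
The plan is to treat the two assertions separately, beginning with the representation formula. First I would compute the diagonal entries of $\text{lump}(M_h)$ explicitly: by definition \eqref{eq:lumped-mass-matrix} together with the partition-of-unity property $\sum_{k=1}^{\overline n_h}\varphi_k^h\equiv 1$ from \eqref{eq:properties-phijh}, one obtains
\[
  \text{lump}(M_h)[i,i]=\sum_{k=1}^{\overline n_h}\int_\Omega\varphi_k^h\varphi_i^h\,dx
  =\int_\Omega\Big(\sum_{k=1}^{\overline n_h}\varphi_k^h\Big)\varphi_i^h\,dx=\int_\Omega\varphi_i^h\,dx .
\]
Writing $p_h=\sum_i p_i\varphi_i^h$ and $q_h=\sum_i q_i\varphi_i^h$ with $p_i=p_h(x_i)$, $q_i=q_h(x_i)$, the diagonality of $\text{lump}(M_h)$ then yields $\skpr{p_h,q_h}_h=\sum_{i} p_iq_i\int_\Omega\varphi_i^h\,dx$. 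On the other hand, $p_hq_h$ is continuous and vanishes at every boundary node, so its nodal interpolant is $I_h^1(p_hq_h)=\sum_i p_iq_i\varphi_i^h$; integrating this over $\Omega$ reproduces exactly the same sum, which proves the first claim. In particular, taking $q_h=p_h$ gives $\norm{p_h}_h^2=\int_\Omega I_h^1(p_h^2)\,dx$, which is the link I would use for the second part.

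For the norm equivalence \eqref{eq:bound L2 norm discrete inner product} I would localize to a single simplex $\tau\in\mathcal{T}_h$ and work in barycentric coordinates $\lambda_0,\dots,\lambda_d$. Denoting by $p_0,\dots,p_d$ the vertex values of $p_h$ on $\tau$, the restriction of $p_h$ is $\sum_j p_j\lambda_j$, while $I_h^1(p_h^2)\rvert_\tau=\sum_j p_j^2\lambda_j$ since interpolation is local. Using the exact integration formula
\[
  \int_\tau\lambda_0^{a_0}\cdots\lambda_d^{a_d}\,dx
  =|\tau|\,\frac{d!\,a_0!\cdots a_d!}{(d+a_0+\cdots+a_d)!},
\]
together with the identity $\sum_{i\neq j}p_ip_j=(\sum_i p_i)^2-\sum_i p_i^2$, a short computation gives the two closed forms
\[
  \int_\tau p_h^2\,dx=\frac{|\tau|}{(d+1)(d+2)}\Big[\sum_{j=0}^d p_j^2+\Big(\sum_{j=0}^d p_j\Big)^2\Big],
  \qquad
  \int_\tau I_h^1(p_h^2)\,dx=\frac{|\tau|}{d+1}\sum_{j=0}^d p_j^2 ,
\]
the latter from $\int_\tau\lambda_j\,dx=|\tau|/(d+1)$.

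The equivalence then reduces to the elementwise two-sided bound on the ratio of these quantities, i.e.\ to locating $\big[\sum_j p_j^2+(\sum_j p_j)^2\big]/\big[(d+2)\sum_j p_j^2\big]$ between $1/(d+2)$ and $1$. The lower bound is immediate from $(\sum_j p_j)^2\geq 0$, while the upper bound follows from the Cauchy--Schwarz inequality $(\sum_{j=0}^d p_j)^2\leq(d+1)\sum_{j=0}^d p_j^2$; summing the resulting elementwise inequalities over all $\tau\in\mathcal{T}_h$ and invoking $\norm{p_h}_h^2=\int_\Omega I_h^1(p_h^2)\,dx$ yields the global statement. I expect the only delicate point to be the \emph{sharpness} of the constant $1/(d+2)$: one must check that the extreme ratios are attained precisely at the configurations $\sum_j p_j=0$ (cancelling the cross terms) and $p_0=\dots=p_d$ (the Cauchy--Schwarz equality case), from which it also becomes apparent that the constant depends only on the spatial dimension $d$ and is independent of $h$ and of the element shape.
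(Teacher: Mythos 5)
Your proof is correct and follows essentially the same route as the paper's: the representation formula comes from the diagonality of $\mathrm{lump}(M_h)$ together with the partition of unity $\sum_k \varphi_k^h \equiv 1$, and the norm equivalence reduces to the elementwise bounds $\frac{|\tau_e|}{(d+1)(d+2)}\sum_i p_i^2 \le \|p_h\|^2_{L_2(\tau_e)} \le \frac{|\tau_e|}{d+1}\sum_i p_i^2$ combined with the identity $\int_{\tau_e} I_h^1(p_h^2)\,dx = \frac{|\tau_e|}{d+1}\sum_i p_i^2$. The only deviation is that the paper cites the elementwise $L_2$ bounds from Steinbach's monograph, whereas you derive them from scratch via the exact barycentric integration formula and Cauchy--Schwarz; this is a harmless, self-contained variant, and your closing discussion of the sharpness of $1/(d+2)$ is not needed for the stated inequality.
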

\begin{proof}
  With the representation \eqref{eq:representation-in-Vh}, we have the
  coefficient vectors $\mathbf{p}_h\leftrightarrow p_h$ and
  $\mathbf{q}_h\leftrightarrow q_h$.
  % $\underline p\leftrightarrow p_h$ and $\underline q\leftrightarrow q_h$. 
  % \mathbf{y}_h
  Then we compute with \eqref{eq:lumped-mass-matrix}, using
  \eqref{eq:properties-phijh},
  \begin{align*}
    \skpr{p_h,q_h}_h
    & = \mathbf{q}_h^\top \text{lump}(M_h)\mathbf{p}_h =
      \sum_{i=1}^{n_h} \sum_{j=1}^{n_h} p_i q_j\delta_{i,j}
      \sum_{k=1}^{\overline{n}_h}\widetilde M_h[i,k] \\
    & = \sum_{i=1}^{n_h} p_iq_i\sum_{k=1}^{\overline{n}_h}
      \widetilde M_h[i,k] = \sum_{i=1}^{n_h} p_i q_i
      \sum_{k=1}^{\overline{n}_h}
      \int_\Omega \varphi_i^h(x)\varphi_k^h(x)\, dx \\
    & = \int_\Omega \underbrace{\sum_{i=1}^{n_h} p_i q_i\varphi_i^h(x)
      }_{I_h^1(p_hq_h)} \underbrace{\sum_{k=1}^{\overline{n}_h}\varphi_k^h(x)
      }_{=1} \, dx. 
  \end{align*}
  %\olaf{
  The estimate \eqref{eq:bound L2 norm discrete inner product}
    follows from, e.g., \cite[Lemma 9.4]{LLSY:Steinbach:2008Monograph},
    \[
      \frac{|\tau_e|}{(d+1)(d+2)}
      \sum\limits_{x_i \in \overline{\tau}_e} p_i^2 \leq
      \norm{p_h}^2_{L_2(\tau_e)} \leq \frac{|\tau_e|}{d+1}
      \sum\limits_{x_i \in \overline{\tau}_e} p_i^2 ,
    \]
    and
    \[
      \int_{\tau_e} I_h^1(p_h^2)(x) \, dx =
      \sum\limits_{x_i \in \overline{\tau}_e} p_i^2
      \int_{\tau_e} \varphi_i^h(x) \, dx = \frac{|\tau_e|}{d+1}
      \sum\limits_{x_i \in \overline{\tau}_e} p_i^2,
    \]
    when summing up over all elements $\tau_e$.
    %}

  %For the last estimate in Lemma~\ref{lem:lumped-mass-and-interpolation}, 
  %we define $g_e(x)=I_h^1(p_h^2)(x)-p_h(x)^2$ for $x\in \tau_e$. 
  %Then $g_e(x_i)=0$ for each vertex $x_i\in \tau_e$, and the Hessian is
  %negative definite as
  %$$ \nabla^2g_e(x) = -2 \, \nabla p_h(x)\nabla p_h(x)^\top< 0,$$ unless
  %$\nabla p_h=0$, i.e., if $p_h\equiv c\in\mathbb{R}$ on each element
  %$\tau_e$ and by continuity on the whole of $\Omega$. But since
  %$p_h(x)=0$ for $x\in\partial \Omega$, this implies that $p_h\equiv 0$. 
  %Thus, we have $g_e(x)=I_h^1(p_h^2)(x)-p_h(x)^2\geq 0$ for all
  %$x\in\tau_e$ and $p_h\in V_h$. Therefore, we arrive at the estimate
  %\begin{align*}
  %\norm{p_h}_{L_2(\Omega)}^2 &= \int_\Omega |p_h(x)|^2\, dx =
  %\sum_{e=1}^{N_h} \int_{\tau_e} p_h(x)^2\, dx 
  %\leq \sum_{e=1}^{N_h} \int_{\tau_e} I_h^1(p_h^2)(x)\, dx =
  %\skpr{p_h,p_h}_h,	
  %\end{align*}
  %which concludes the proof.
\end{proof}

\noindent
With this representation, we can compute the consistency error. 
\begin{lemma}\label{lem:interpolation-error}
		%Denote by $h = \max_{e=1,\ldots,N}h_e$. 
  Let $h = \max_{e=1,\ldots,N_h} h_e$. Then, for $p_h,q_h\in V_h$, it holds
  \begin{align*}
    \left|\skpr{p_h,q_h}_{L_2(\Omega)}-\skpr{p_h,q_h}_{h}\right|
    &=\left|\int_\Omega \Big[ p_h(x)q_h(x)-I_h^1(p_hq_h)(x) \Big] \,
      dx\right|\\
    & \leq c \, h^2 \Big( \varepsilon^2 \,
      \norm{\nabla p_h}_{L_2(\Omega)}^2 +
      \frac{1}{\varepsilon^2}\norm{\nabla q_h}_{L_2(\Omega)}^2\Big),
  \end{align*}
  for any $\varepsilon>0$. 
\end{lemma}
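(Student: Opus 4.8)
The plan is to first dispose of the identity and then treat the integral element by element. The equality in the statement is immediate: by definition $\skpr{p_h,q_h}_{L_2(\Omega)}=\int_\Omega p_hq_h\,dx$, while Lemma~\ref{lem:lumped-mass-and-interpolation} gives $\skpr{p_h,q_h}_h=\int_\Omega I_h^1(p_hq_h)\,dx$, so their difference is exactly $\int_\Omega[p_hq_h-I_h^1(p_hq_h)]\,dx$. It therefore remains to bound this interpolation defect, and since $I_h^1$ acts locally I would split it as $\sum_{e=1}^{N_h}\int_{\tau_e}[f-I_h^1 f]\,dx$ with $f:=p_hq_h$ and estimate each contribution separately.

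The decisive structural observation is that on a simplex $\tau_e$ both $p_h$ and $q_h$ are affine, so $f$ is a quadratic polynomial whose Hessian is \emph{constant}, namely $D^2 f=\nabla p_h\otimes\nabla q_h+\nabla q_h\otimes\nabla p_h$; in particular $|D^2 f|\le 2\,|\nabla p_h|\,|\nabla q_h|$ pointwise, and hence $|f|_{H^2(\tau_e)}\le 2\,|\tau_e|^{1/2}\,|\nabla p_h|\,|\nabla q_h|$ since the gradients are constant on $\tau_e$. With this I would invoke the standard linear interpolation estimate $\norm{f-I_h^1 f}_{L_2(\tau_e)}\le c\,h_e^2\,|f|_{H^2(\tau_e)}$ (Bramble--Hilbert on the reference simplex, transported by shape-regularity), combine it with Cauchy--Schwarz to get $|\int_{\tau_e}(f-I_h^1 f)\,dx|\le|\tau_e|^{1/2}\norm{f-I_h^1 f}_{L_2(\tau_e)}$, and so reach the local bound $|\int_{\tau_e}(f-I_h^1 f)\,dx|\le c\,h_e^2\,|\tau_e|\,|\nabla p_h|\,|\nabla q_h|$.

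To finish, I would use that $\nabla p_h$ is constant on $\tau_e$, whence $|\tau_e|\,|\nabla p_h|^2=\norm{\nabla p_h}_{L_2(\tau_e)}^2$ (and likewise for $q_h$), and apply the weighted Young inequality $|\nabla p_h|\,|\nabla q_h|\le\tfrac{\varepsilon^2}{2}|\nabla p_h|^2+\tfrac{1}{2\varepsilon^2}|\nabla q_h|^2$ to separate the two factors. Summing over all elements and bounding $h_e\le h$ then produces the claimed estimate with the $\varepsilon^2$ / $\varepsilon^{-2}$ split, the prefactors of $\tfrac12$ being absorbed into $c$.

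The only genuinely delicate point is the \emph{uniformity} of the interpolation constant $c$: the estimate $\norm{f-I_h^1 f}_{L_2(\tau_e)}\le c\,h_e^2|f|_{H^2(\tau_e)}$ must hold with a single $c$ for all elements, which is precisely where the quasi-uniformity and shape-regularity of $\mathcal{T}_h$ enter, guaranteeing $h_e\sim|\tau_e|^{1/d}$ and a uniformly bounded pull-back to the reference simplex. As a clean alternative that avoids Bramble--Hilbert altogether, one can evaluate the local vertex-quadrature defect \emph{exactly} in barycentric coordinates: writing $p_i=p_h(x_i)$ and $\bar p=\frac{1}{d+1}\sum_i p_i=p_h(x_c)$ (the value at the centroid), a direct computation gives $\int_{\tau_e}[f-I_h^1 f]\,dx=-\frac{|\tau_e|}{d+2}\sum_i(p_i-\bar p)(q_i-\bar q)$, and since $p_i-\bar p=\nabla p_h\cdot(x_i-x_c)$ one obtains $\big(\sum_i(p_i-\bar p)^2\big)^{1/2}\le c\,h_e\,|\nabla p_h|$ from shape-regularity, recovering the same local bound. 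Either route then feeds into the identical Young and summation argument.
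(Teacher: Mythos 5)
Your argument is correct, and it reaches the same local bound
$\bigl|\int_{\tau_e}(p_hq_h-I_h^1(p_hq_h))\,dx\bigr|\le c\,h_e^2\,\norm{\nabla p_h}_{L_2(\tau_e)}\norm{\nabla q_h}_{L_2(\tau_e)}$
as the paper, after which the Young-inequality split and the summation over elements are identical. The route to that local bound is genuinely different, though. The paper works entirely by hand: for each dimension $d=1,2,3$ separately it evaluates the quadrature defect \emph{exactly} on the (reference) simplex in terms of the nodal differences $p_i-p_j$, $q_i-q_j$, applies discrete Cauchy--Schwarz to these sums, identifies them with $\int_\tau|\nabla_\eta\widetilde p_h|^2\,d\eta$, and maps back. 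Your primary route instead invokes the Bramble--Hilbert interpolation estimate $\norm{f-I_h^1f}_{L_2(\tau_e)}\le c\,h_e^2|f|_{H^2(\tau_e)}$ together with the key structural observation that $D^2(p_hq_h)=\nabla p_h\otimes\nabla q_h+\nabla q_h\otimes\nabla p_h$ is constant on each simplex, so that $|p_hq_h|_{H^2(\tau_e)}\le 2|\tau_e|^{1/2}|\nabla p_h|\,|\nabla q_h|$; this is shorter and dimension-uniform, at the price of a nonexplicit constant and the (correctly flagged) reliance on shape regularity for its uniformity. Your alternative closed-form identity $\int_{\tau_e}[p_hq_h-I_h^1(p_hq_h)]\,dx=-\frac{|\tau_e|}{d+2}\sum_i(p_i-\bar p)(q_i-\bar q)$ is in fact the dimension-independent distillation of exactly what the paper computes case by case (it reproduces the paper's $d=1,2,3$ formulas, up to an inconsequential sign in the paper's $d=1$ display), and it has the advantage of exhibiting the constant $\frac{1}{d+2}$ explicitly; either of your two routes is a valid and arguably cleaner replacement for the paper's three separate computations.
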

%
%\olaf{
\begin{proof}
  The first representation follows from
  Lemma \ref{lem:lumped-mass-and-interpolation}. Let $\tau_e$ be a
  simplicial finite element with the nodes $x_{e_i}$, $i=1,\ldots,d+1$.
  The associated nodal values of a piecewise linear finite element
  function $p_h$ are the coefficients $p_{e_i}$, $i=1,\ldots,d+1$.
  In particular, for $d=1$ and $x \in \tau_e$, we then compute
  \begin{eqnarray*}
    && \hspace*{-1cm}
       \int_{\tau_e} \Big[ p_h(x) q_h(x) - I_h^1(p_hq_h)(x) \Big] \, dx \\
  && = \, \int_{x_{e_1}}^{x_{e_2}} \left(
     \Big[ p_{e_1} + \frac{x-x_{e_1}}{h_e} (p_{e_2}-p_{e_1}) \Big]
     \Big[ q_{e_1} + \frac{x-x_{e_1}}{h_e} (q_{e_2}-q_{e_1}) \Big] \right. \\
    && \hspace*{4cm} \left.
     - \Big[ p_{e_1}q_{e_1} + \frac{x-x_{e_1}}{h_e} (p_{e_2}q_{e_2}-p_{e_1}q_{e_1})
     \Big] \right) dx \\
    && = \frac{1}{6} \, h_e \, (p_{e_2}-p_{e_1}) \, (q_{e_2}-q_{e_1}) \, = \,
  \frac{1}{6} \, h_e^2 \, \int_{x_{e_1}}^{x_{e_2}}
     \frac{p_{e_2}-p_{e_1}}{h_e} \, \frac{q_{e_2}-q_{e_1}}{h_e} \, dx \\
  && = \frac{1}{6} \, h_e^2 \, \int_{x_{e_1}}^{x_{e_2}} p_h'(x) \, q_h'(x) \, dx
    \, \leq \, \frac{1}{6} \, h_e^2 \, \| \nabla_x p_h \|_{L^2(\tau_e)}
     \| \nabla_x q_h \|_{L^2(\tau_e)} \, .
  \end{eqnarray*}
  For $d=2$ and $x \in \tau_e$, we introduce the representation
  $x=x_{e_1} +J_e \eta$ with respect to the reference element
  $\tau = \{ \eta \in {\mathbb{R}}^2 : \eta_1 \in (0,1), \eta_2 \in
  (0,1-\eta_1)\}$ and we write $p_h(x)=p_h(x_{e_1}+J_e\eta)=
  \widetilde{p}_h(\eta)$, $\eta \in \tau$.
  Similar as in the case $d=1$ we then compute, using
  $\mbox{det} \, J_e = 2 \, |\tau_e|$,
  \begin{eqnarray*}
  && \int_{\tau_e} \Big[ p_h(x) q_h(x) - I_h^1(p_hq_h)(x) \Big] \, dx \, = \,
     \int_\tau \Big[ \widetilde{p}_h(\eta) \widetilde{q}_h(\eta) -
     I_h^1(\widetilde{p}_h\widetilde{q}_h)(\eta) \Big] \,
     \mbox{det} J_e \, d\eta \\
  && = \frac{|\tau_e|}{12} \Big[
     (p_0-p_2)(q_2-q_0) + (p_1-p_0)(q_0-q_1) + (p_1-p_2)(q_2-q_1) 
     \Big] \\
  && \leq \frac{|\tau_e|}{12}
     \Big[
     (p_0 - p_2)^2 + (p_1-p_0)^2 + (p_1-p_2)^2
     \Big]^{1/2} \\
    && \hspace*{3cm} \cdot
     \Big[
     (q_2 - q_0)^2 + (q_0-q_1)^2 + (q_2-q_1)^2
     \Big]^{1/2} \, .
\end{eqnarray*}
With
\[
  (p_1-p_2)^2 =
  (p_1-p_0+p_0-p_2)^2 \leq 2 \, (p_1-p_0)^2 + 2 \, (p_0-p_2)^2, 
\]
we further have, e.g., \cite[Lemma 9.1]{LLSY:Steinbach:2008Monograph},
\begin{eqnarray*}
  && \int_{\tau_e} \Big[ p_h(x) q_h(x) - I_h^1(p_hq_h)(x) \Big] \, dx \\
  && \hspace*{1cm}  \leq \, \frac{|\tau_e|}{4} 
     \Big[ (p_0 - p_2)^2 + (p_1-p_0)^2 \Big]^{1/2}
     \Big[ (q_2 - q_0)^2 + (q_0-q_1)^2 \Big]^{1/2} \\
  & & \hspace*{1cm} = \, \frac{|\tau_e|}{4} \left[
        2 \int_\tau |\nabla_\eta \widetilde{p}_h|^2 d\eta \right]^{1/2}
        \left[
        2 \int_\tau |\nabla_\eta \widetilde{q}_h|^2 d\eta \right]^{1/2} \\
  && \hspace*{1cm}
     = \, \frac{|\tau_e|}{2} \, \| \nabla_\eta \widetilde{p}_h \|_{L^2(\tau)}
     \| \nabla_\eta\widetilde{q}_h \|_{L^2(\tau)} 
     \, \leq \, c \, h_e^2 \, \| \nabla_x p_h \|_{L^2(\tau_e)}
     \| \nabla_x q_h\|_{L^2(\tau_e)} \, .
\end{eqnarray*}
For $d=3$, we proceed in the same way. Now the reference element is
given by $\tau = \{ \eta \in {\mathbb{R}}^3 : \eta_1 \in (0,1),
\eta_2 \in (0,1-\eta_1), \eta_3 \in (0,1-\eta_1-\eta_2) \}$,
and $\mbox{det} \, J_e = 6 \, |\tau_e|$. Then,
\begin{eqnarray*}
  && \int_{\tau_e} \Big[ p_h(x) q_h(x) - I_h^1(p_hq_h)(x) \Big] \, dx \\
  && = \,
     6 \, |\tau_e| \, \int_0^1 \int_0^{1-\eta_1} \int_0^{1-\eta_1-\eta_2}
     \Big[ \widetilde{p}_h(\eta) \widetilde{q}_h(\eta) -
     I_h^1(\widetilde{p}_h\widetilde{q}_h)(\eta) \Big] d\eta_3 d\eta_2 d\eta_1
  \\
  && = \frac{|\tau_e|}{20} \Big[
     (p_0-p_1)(q_1-q_0) + (p_0-p_2)(q_2-q_0) + (p_0-p_3)(q_3-q_0) \\
  && \hspace*{3cm} +
     (p_1-p_2)(q_2-q_1) + (p_1-p_3)(q_3-q_1) + (p_2-p_3)(q_3-q_2) \Big] \\
  && \leq \frac{|\tau_e|}{20} \Big[
     (p_0-p_1)^2 + (p_0-p_2)^2 + (p_0-p_3)^2 + (p_1-p_2)^2 + (p_1-p_3)^2
     + (p_2-p_3)^2 \Big]^{1/2} \\
  && \hspace*{1cm} \cdot \Big[
     (q_1-q_0)^2 + (q_2-q_0)^2 + (q_3-q_0)^2 + (q_2-q_1)^2 + (q_3-q_1)^2
     + (q_3-q_2)^2 \Big]^{1/2} \\
  && \leq \frac{|\tau_e|}{4} \Big[
     (p_0-p_1)^2 + (p_0-p_2)^2 + (p_0-p_3)^2 \Big]^{1/2} \Big[
     (q_1-q_0)^2 + (q_2-q_0)^2 + (q_3-q_0)^2 \Big]^{1/2} \\
  && = \frac{|\tau_e|}{4} \left[ 6 \int_\tau |\nabla_\eta \widetilde{p}_h|^2
     d\eta \right]^{1/2} \left[ 6 \int_\tau |\nabla_\eta| \widetilde{q}_h|^2
     d\eta \right]^{1/2} \\
  && =  \frac{3}{2} \, |\tau_e| \,
        \| \nabla_\eta \widetilde{p}_h \|_{L^2(\tau)}
        \| \nabla_\eta \widetilde{q}_h \|_{L^2(\tau)} \,
  \leq \, c \, h_e^2 \, \| \nabla_x p_h \|_{L^2(\tau_e)}
     \| \nabla_x q_h \|_{L^2(\tau_e)} \, .
\end{eqnarray*}
Hence, using Young's inequality,
\[
  \| \nabla_x p_h \|_{L_2(\tau_e)} \| \nabla_x q_h \|_{L_2(\tau_e)}
  \leq \frac{1}{2} \, \left( \varepsilon^2 \,
    \| \nabla_x p_h \|^2_{L_2(\tau_e)} +
    \frac{1}{\varepsilon^2} \, \| \nabla_x q_h \|^2_{L_2(\tau_e)} \right), 
\]
and summing up over all elements $\tau_e$, this gives the desired
estimate.
\end{proof}

\noindent
We need one more preliminary result, before we can state the main theorem.

\begin{lemma}\label{lem:regularization-estimates}
  Let $(y_\varrho,\tilde p_\varrho)\in H_0^1(\Omega)\times H_0^1(\Omega)$ be
  the unique solution of the reduced optimality system 
  \eqref{LLSY:Eqn:VF-optimality-system-scaled1} and
  \eqref{LLSY:Eqn:VF-optimality-system-scaled2}.
  Then there holds the regularization error estimate
  \begin{align*}
    \norm{y_\varrho-y_d}_{H^{-1}(\Omega)}
    \leq c \, \sqrt{\varrho} \, |y_d|_{H^1(\Omega)} \quad
    \text{for } y_d \in H_0^1(\Omega). 
  \end{align*}
  Additionally, for $y_d \in H^{\Delta}(\Omega)\cap H_0^1(\Omega)$, there holds 
  \begin{align*}
    \norm{y_\varrho-y_d}_{L_2(\Omega)} \leq
    \sqrt{\varrho} \, \| \Delta y_d\|_{L_2(\Omega)} \quad \text{and} \quad
    \norm{\Delta y_\varrho}_{L_2(\Omega)} \leq \norm{\Delta y_d}_{L_2(\Omega)}. 
  \end{align*}
\end{lemma}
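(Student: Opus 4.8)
The plan is to work directly with the scaled weak system \eqref{LLSY:Eqn:VF-optimality-system-scaled1}--\eqref{LLSY:Eqn:VF-optimality-system-scaled2} and to extract all three estimates from one basic identity. Multiplying \eqref{LLSY:Eqn:VF-optimality-system-scaled2} by $\sqrt\varrho$ gives
\begin{align*}
  \skpr{y_\varrho-y_d,v}_{L_2(\Omega)} = \sqrt\varrho\,\skpr{\nabla\tilde p_\varrho,\nabla v}_{L_2(\Omega)} \qquad \forall\, v\in H_0^1(\Omega),
\end{align*}
which will be the engine for everything. I would also record at the outset that testing \eqref{LLSY:Eqn:VF-optimality-system-scaled1} with arbitrary $q\in H_0^1(\Omega)$ shows that $y_\varrho$ solves $-\Delta y_\varrho=-\tfrac{1}{\sqrt\varrho}\tilde p_\varrho$ weakly, hence $y_\varrho\in H^\Delta(\Omega)$ and $\tilde p_\varrho=\sqrt\varrho\,\Delta y_\varrho$.

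For the $H^{-1}$-estimate I would use the dual characterization $\norm{w}_{H^{-1}(\Omega)}=\sup_{0\neq v\in H_0^1(\Omega)}\skpr{w,v}_{L_2(\Omega)}/|v|_{H^1(\Omega)}$. Inserting the basic identity and applying Cauchy--Schwarz in the supremum yields at once $\norm{y_\varrho-y_d}_{H^{-1}(\Omega)}\leq\sqrt\varrho\,\norm{\nabla\tilde p_\varrho}_{L_2(\Omega)}$ (in fact equality, the supremum being attained at $v=\tilde p_\varrho$). It then remains to bound $\norm{\nabla\tilde p_\varrho}_{L_2(\Omega)}$ by $|y_d|_{H^1(\Omega)}$. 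For this I would test \eqref{LLSY:Eqn:VF-optimality-system-scaled1} with $q=y_\varrho$ and \eqref{LLSY:Eqn:VF-optimality-system-scaled2} with $v=\tilde p_\varrho$ and subtract, so that the indefinite coupling terms cancel and
\begin{align*}
  \norm{\nabla y_\varrho}_{L_2(\Omega)}^2 + \norm{\nabla\tilde p_\varrho}_{L_2(\Omega)}^2 = -\tfrac{1}{\sqrt\varrho}\,\skpr{y_d,\tilde p_\varrho}_{L_2(\Omega)}.
\end{align*}
Testing \eqref{LLSY:Eqn:VF-optimality-system-scaled1} once more with $q=y_d$ rewrites the right-hand side as $\skpr{\nabla y_\varrho,\nabla y_d}_{L_2(\Omega)}$, and Cauchy--Schwarz then gives both $\norm{\nabla y_\varrho}_{L_2(\Omega)}\leq|y_d|_{H^1(\Omega)}$ and $\norm{\nabla\tilde p_\varrho}_{L_2(\Omega)}\leq|y_d|_{H^1(\Omega)}$, which closes this estimate.

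Under the stronger assumption $y_d\in H^\Delta(\Omega)\cap H_0^1(\Omega)$ I would test the basic identity with $v=y_\varrho-y_d\in H_0^1(\Omega)$. The term $\skpr{\nabla\tilde p_\varrho,\nabla y_\varrho}_{L_2(\Omega)}$ is replaced by $-\tfrac{1}{\sqrt\varrho}\norm{\tilde p_\varrho}_{L_2(\Omega)}^2$ via \eqref{LLSY:Eqn:VF-optimality-system-scaled1} with $q=\tilde p_\varrho$, while $\skpr{\nabla\tilde p_\varrho,\nabla y_d}_{L_2(\Omega)}$ is integrated by parts to $-\skpr{\tilde p_\varrho,\Delta y_d}_{L_2(\Omega)}$ (the boundary trace vanishing since $\tilde p_\varrho\in H_0^1(\Omega)$); this is exactly where $y_d\in H^\Delta(\Omega)$ enters. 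One obtains
\begin{align*}
  \norm{y_\varrho-y_d}_{L_2(\Omega)}^2 + \norm{\tilde p_\varrho}_{L_2(\Omega)}^2 = \sqrt\varrho\,\skpr{\tilde p_\varrho,\Delta y_d}_{L_2(\Omega)} \leq \sqrt\varrho\,\norm{\tilde p_\varrho}_{L_2(\Omega)}\norm{\Delta y_d}_{L_2(\Omega)}.
\end{align*}
Dropping the first nonnegative term yields $\norm{\tilde p_\varrho}_{L_2(\Omega)}\leq\sqrt\varrho\,\norm{\Delta y_d}_{L_2(\Omega)}$; since $\tilde p_\varrho=\sqrt\varrho\,\Delta y_\varrho$ this is precisely $\norm{\Delta y_\varrho}_{L_2(\Omega)}\leq\norm{\Delta y_d}_{L_2(\Omega)}$, and feeding this bound back into the displayed identity gives $\norm{y_\varrho-y_d}_{L_2(\Omega)}\leq\sqrt\varrho\,\norm{\Delta y_d}_{L_2(\Omega)}$.

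The individual steps are elementary testing together with Cauchy--Schwarz; the only real care is in the bookkeeping of the $\sqrt\varrho$ and $\tfrac{1}{\sqrt\varrho}$ weights, which must telescope into the advertised $\sqrt\varrho$-rate, and in choosing the test functions so that subtracting the two equations produces a sign-definite energy identity rather than an indefinite quantity. The subtlest point is the integration by parts in the $L_2$-estimate: it relies on $y_\varrho,y_d\in H^\Delta(\Omega)$ and on the vanishing of the boundary traces, so I would first make explicit (via \eqref{LLSY:Eqn:VF-optimality-system-scaled1}) that $\tilde p_\varrho=\sqrt\varrho\,\Delta y_\varrho\in L_2(\Omega)$ before invoking it. Conceptually, all of this is the weak-form shadow of the reduced fourth-order equation $\varrho\,\Delta^2 y_\varrho + y_\varrho = y_d$, whose natural energy is $\norm{y-y_d}_{L_2(\Omega)}^2+\varrho\,\norm{\Delta y}_{L_2(\Omega)}^2$; testing against $y_\varrho-y_d$ in that energy is the quickest route and explains why one and the same identity delivers the $L_2$- and the $\Delta$-bound simultaneously.
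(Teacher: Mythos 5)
Your proposal is correct. For the second and third estimates it is essentially the paper's own argument, only rewritten in the scaled variable $\tilde p_\varrho=\varrho^{-1/2}p_\varrho$: the paper computes $\norm{y_\varrho-y_d}_{L_2(\Omega)}^2=\skpr{\nabla p_\varrho,\nabla(y_\varrho-y_d)}_{L_2(\Omega)}=-\varrho\norm{\Delta y_\varrho}_{L_2(\Omega)}^2+\varrho\skpr{\Delta y_\varrho,\Delta y_d}_{L_2(\Omega)}$ and concludes exactly as you do; your identity $\norm{y_\varrho-y_d}_{L_2(\Omega)}^2+\norm{\tilde p_\varrho}_{L_2(\Omega)}^2=\sqrt{\varrho}\,\skpr{\tilde p_\varrho,\Delta y_d}_{L_2(\Omega)}$ is the same statement. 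Two small improvements on your side: you replace the integration by parts against $y_\varrho$ by testing \eqref{LLSY:Eqn:VF-optimality-system-scaled1} with $q=\tilde p_\varrho$, and you note up front that $\Delta y_\varrho=\varrho^{-1/2}\tilde p_\varrho\in L_2(\Omega)$, so the regularity $y_\varrho\in H^\Delta(\Omega)$ is automatic rather than an assumption to be discharged afterwards, as the paper does at the end of its proof. The genuine difference is the first estimate: the paper simply cites \cite[Theorem 4.1, (4.7)]{LLSY:NeumuellerSteinbach:2021a}, whereas you give a short self-contained proof via the dual characterization of the $H^{-1}$-norm, the cancellation identity $\norm{\nabla y_\varrho}_{L_2(\Omega)}^2+\norm{\nabla\tilde p_\varrho}_{L_2(\Omega)}^2=\skpr{\nabla y_\varrho,\nabla y_d}_{L_2(\Omega)}$, and Cauchy--Schwarz; this is correct (it even yields $c=1$ under the seminorm-dual convention for $H^{-1}(\Omega)$, and up to a Poincar\'e constant otherwise) and makes the lemma independent of the external reference.
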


\begin{proof}
  The first estimate is given in
  \cite[Theorem 4.1, (4.7)]{LLSY:NeumuellerSteinbach:2021a}. 
  The second and third estimate can be found in
  \cite[Lemma 1, (2.5)]{LLSY:LangerLoescherSteinbachYang:2023} and
  in the proof of this lemma. But for clarity, we will recall the proof. 
  We note that, by the optimality system, we have the equations
  $-\Delta y_\varrho = u_\varrho$, and $p_\varrho = -\varrho u_\varrho$.
  First, assuming the regularity
  $y_\varrho,y_d\in H^\Delta(\Omega)\cap H_0^1(\Omega)$, using 
  \eqref{LLSY:Eqn:VF-optimality-system1} and
  \eqref{LLSY:Eqn:VF-optimality-system2},
  and integration by parts, we obtain
  \begin{align*}
    \norm{y_\varrho -y_d}_{L_2(\Omega)}^2
    & = \skpr{y_\varrho -y_d,y_\varrho -y_d}_{L_2(\Omega)}
      = \skpr{\nabla p_\varrho,\nabla(y_\varrho-y_d)}_{L_2(\Omega)} \\
    & = \skpr{p_\varrho,-\Delta y_\varrho}_{L_2(\Omega)} +
      \skpr{p_\varrho,\Delta y_d}_{L_2(\Omega)} \\
    & = -\varrho \, \skpr{u_\varrho,-\Delta y_\varrho}_{L_2(\Omega)} -
      \varrho \, \skpr{u_\varrho,\Delta y_d}_{L_2(\Omega)} \\
    & = - \varrho \, \norm{\Delta y_\varrho}_{L_2(\Omega)}^2 +
      \varrho \, \skpr{\Delta y_\varrho,\Delta y_d}_{L_2(\Omega)}.
  \end{align*}
  From this we conclude
  \begin{align*}
    \norm{y_\varrho-y_d}_{L_2(\Omega)}^2 +
    \varrho \, \norm{\Delta y_\varrho}_{L_2(\Omega)}^2 \leq
    \varrho \, \norm{\Delta y_\varrho}_{L_2(\Omega)}
    \norm{\Delta y_d}_{L_2(\Omega)},
  \end{align*}
  and further
  \begin{align*}
    \norm{\Delta y_\varrho}_{L_2(\Omega)} \leq
    \norm{\Delta y_d}_{L_2(\Omega)} \quad \text{and} \quad
    \norm{y_\varrho-y_d}_{L_2(\Omega)} \leq
    \sqrt{\varrho} \, \norm{\Delta y_d}_{L_2(\Omega)}. 
  \end{align*}
  From the first estimate we conclude that
  $y_d\in H^\Delta(\Omega)\cap H_0^1(\Omega)$
  implies 
  $y_\varrho \in H^\Delta(\Omega)\cap H_0^1(\Omega)$. 
  This confirms that it is sufficient to require the  regularity on $y_d$ only. 
\end{proof}

\noindent
The main statement of this paper is formulated in the following theorem. 

\begin{theorem}\label{thm:h1h2error-lumped}
  Let $(\hat y_{\varrho h},\hat p_{\varrho h}) \in V_h\times V_h$ be the
  unique solution of the variational formulation 
  \eqref{LLSY:Eqn:DVF-optimality-system-scaled-lumped1} and
  \eqref{LLSY:Eqn:DVF-optimality-system-scaled-lumped2}.
  Assume that $\mathcal{T}_h$ is globally quasi-uniform such
  that a global inverse inequality holds true. Further,
  choose $\varrho = h^4$. Then, 
  \begin{align*}
    \norm{\hat y_{\varrho h}-y_d}_{L_2(\Omega)} \leq
    \begin{cases}
      c \, h \, \norm{y_d}_{H^1_0(\Omega)},
      & \text{if } y_d\in H_0^1(\Omega),\\[1mm]
      c \, h^2 \, \| y_d \|_{H^2(\Omega)},
      & \text{if } y_d \in H^2(\Omega) \cap H_0^1(\Omega) \text{ and }
      \Omega \text{ is convex.}
    \end{cases}
  \end{align*}
\end{theorem}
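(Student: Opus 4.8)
The plan is to compare the mass-lumped state $\hat y_{\varrho h}$ not directly with $y_d$ but with the consistently discretized state $y_{\varrho h}$ of \eqref{LLSY:Eqn:DVF-optimality-system-scaled1}--\eqref{LLSY:Eqn:DVF-optimality-system-scaled2}, and to invoke the triangle inequality $\norm{\hat y_{\varrho h}-y_d}_{L_2(\Omega)}\le \norm{\hat y_{\varrho h}-y_{\varrho h}}_{L_2(\Omega)}+\norm{y_{\varrho h}-y_d}_{L_2(\Omega)}$. The second term is already controlled by Theorem~\ref{thm:error-yy_d} with $s=1$ (resp.\ $s=2$ in the convex case) and contributes exactly the claimed rate $c\,h^s\norm{y_d}_{H^s(\Omega)}$, so the whole task reduces to estimating the mass-lumping perturbation $e_y:=\hat y_{\varrho h}-y_{\varrho h}$ together with its companion $e_p:=\hat p_{\varrho h}-\tilde p_{\varrho h}$. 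Subtracting \eqref{LLSY:Eqn:DVF-optimality-system-scaled1}--\eqref{LLSY:Eqn:DVF-optimality-system-scaled2} from \eqref{LLSY:Eqn:DVF-optimality-system-scaled-lumped1}--\eqref{LLSY:Eqn:DVF-optimality-system-scaled-lumped2} and writing $\skpr{\hat p_{\varrho h},q_h}_h=\skpr{\hat p_{\varrho h},q_h}_{L_2(\Omega)}-r(q_h)$ with the lumping consistency functional $r(q_h):=\skpr{\hat p_{\varrho h},q_h}_{L_2(\Omega)}-\skpr{\hat p_{\varrho h},q_h}_h$, I obtain an error system for $(e_y,e_p)$ whose only right-hand side is $\tfrac{1}{\sqrt{\varrho}}\,r(\cdot)$; this is precisely where Lemma~\ref{lem:interpolation-error} will enter.

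Next I would run two energy estimates on this error system, exploiting its skew coupling. Testing the first error equation with $e_p$ and the second with $e_y$ and \emph{adding} cancels the $\skpr{\nabla\cdot,\nabla\cdot}$ cross terms and gives $\norm{e_y}_{L_2(\Omega)}^2+\norm{e_p}_{L_2(\Omega)}^2=r(e_p)$; testing the first with $e_y$ and the second with $e_p$ and \emph{subtracting} cancels the $L_2$ cross terms and gives $\norm{\nabla e_y}_{L_2(\Omega)}^2+\norm{\nabla e_p}_{L_2(\Omega)}^2=\tfrac{1}{\sqrt{\varrho}}\,r(e_y)$. Using Lemma~\ref{lem:interpolation-error} in Cauchy--Schwarz form, $|r(q_h)|\le c\,h^2\,\norm{\nabla\hat p_{\varrho h}}_{L_2(\Omega)}\norm{\nabla q_h}_{L_2(\Omega)}$, together with the decisive algebraic identity $h^2/\sqrt{\varrho}=1$ forced by $\varrho=h^4$, the second relation yields $\norm{\nabla e_y}_{L_2(\Omega)}+\norm{\nabla e_p}_{L_2(\Omega)}\le c\,\norm{\nabla\hat p_{\varrho h}}_{L_2(\Omega)}$, and feeding this into the first relation produces the key bound $\norm{e_y}_{L_2(\Omega)}\le c\,h\,\norm{\nabla\hat p_{\varrho h}}_{L_2(\Omega)}$. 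These are genuine multiplicative estimates (no absorption of $\norm{\nabla\hat p_{\varrho h}}$ against itself), so the constants need not be small here.

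The heart of the argument, and the step I expect to be hardest, is an $h$-robust a~priori bound on the discrete lumped adjoint, namely $\norm{\nabla\hat p_{\varrho h}}_{L_2(\Omega)}\le c\,|y_d|_{H^1(\Omega)}$ in the $H_0^1$-case and $\norm{\nabla\hat p_{\varrho h}}_{L_2(\Omega)}\le c\,h\,\norm{y_d}_{H^2(\Omega)}$ in the convex $H^2$-case, since inserting either into the previous bound gives exactly the $O(h)$ resp.\ $O(h^2)$ contribution of $e_y$. Testing \eqref{LLSY:Eqn:DVF-optimality-system-scaled-lumped2} with $\hat p_{\varrho h}$ yields the discrete adjoint identity $\norm{\nabla\hat p_{\varrho h}}_{L_2(\Omega)}^2=\tfrac{1}{\sqrt{\varrho}}\,\skpr{\hat y_{\varrho h}-y_d,\hat p_{\varrho h}}_{L_2(\Omega)}$, and the subtlety is that the data term must \emph{not} be treated as $\tfrac{1}{\sqrt{\varrho}}\skpr{y_d,\hat p_{\varrho h}}$ (which loses powers of $h$), but split through the continuous state as $\skpr{\hat y_{\varrho h}-y_d,\hat p_{\varrho h}}=\skpr{\hat y_{\varrho h}-y_\varrho,\hat p_{\varrho h}}+\skpr{y_\varrho-y_d,\hat p_{\varrho h}}$. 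The second piece is handled by Lemma~\ref{lem:regularization-estimates}: the estimate $\norm{y_\varrho-y_d}_{H^{-1}(\Omega)}\le c\sqrt{\varrho}\,|y_d|_{H^1(\Omega)}$ paired with $\hat p_{\varrho h}\in H_0^1(\Omega)$ gives the $O(1)$ bound in the $H^1$-case, while in the convex $H^2$-case the bound $\norm{\tilde p_\varrho}_{L_2(\Omega)}=\sqrt{\varrho}\,\norm{\Delta y_\varrho}_{L_2(\Omega)}\le\sqrt{\varrho}\,\norm{\Delta y_d}_{L_2(\Omega)}=O(h^2)$, combined with the global inverse inequality (this is exactly why global quasi-uniformity is assumed) and $H^2$-regularity on the convex domain, upgrades an $O(h^2)$ $L_2$-bound on the adjoint into the required $O(h)$ $H^1$-bound. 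The first piece $\tfrac{1}{\sqrt{\varrho}}\skpr{\hat y_{\varrho h}-y_\varrho,\hat p_{\varrho h}}$ carries the state discretization error and must be absorbed using Theorem~\ref{thm:error-yy_d}, the norm equivalence \eqref{eq:bound L2 norm discrete inner product}, and a careful bookkeeping of the Lemma~\ref{lem:interpolation-error} constant against the factor $h^2/\sqrt{\varrho}=1$; closing this coupling between the state error and the adjoint bound is the crux of the proof. Collecting the pieces and adding the Theorem~\ref{thm:error-yy_d} term then yields the asserted rates in both regularity regimes.
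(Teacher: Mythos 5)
Your overall architecture coincides with the paper's: triangle inequality through the consistent discrete solution $y_{\varrho h}$, Theorem~\ref{thm:error-yy_d} for $\norm{y_{\varrho h}-y_d}_{L_2(\Omega)}$, an energy identity for the error pair $(e_y,e_p)$ driven by the lumping consistency functional, Lemma~\ref{lem:interpolation-error} with $h^2/\sqrt{\varrho}=1$, and finally the regularization estimates of Lemma~\ref{lem:regularization-estimates}. Your two coupled identities (the ``add'' and ``subtract'' tests) are correct and give a clean variant of the paper's single identity: the paper instead absorbs the $\norm{\nabla e_p}^2$ term via the inverse inequality with a calibrated $\varepsilon$, while you control $\norm{\nabla e_p}$ directly from the gradient-level identity. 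Up to and including the bound $\norm{e_y}_{L_2(\Omega)}\le c\,h\,\norm{\nabla(\text{adjoint})}_{L_2(\Omega)}$ the argument is sound.

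The genuine gap is in \emph{which} adjoint you anchor the consistency functional to, and it is not a bookkeeping issue. You define $r(q_h)=\skpr{\hat p_{\varrho h},q_h}_{L_2(\Omega)}-\skpr{\hat p_{\varrho h},q_h}_h$, so your final bound reads $\norm{e_y}_{L_2(\Omega)}\le c\,h\,\norm{\nabla \hat p_{\varrho h}}_{L_2(\Omega)}$ and you must estimate the gradient of the \emph{lumped} discrete adjoint. Your proposed route, testing \eqref{LLSY:Eqn:DVF-optimality-system-scaled-lumped2} with $\hat p_{\varrho h}$ and splitting $\hat y_{\varrho h}-y_d=(\hat y_{\varrho h}-y_\varrho)+(y_\varrho-y_d)$, does not close: the term $\frac{1}{\sqrt{\varrho}}\skpr{\hat y_{\varrho h}-y_\varrho,\hat p_{\varrho h}}_{L_2(\Omega)}$ contains $\norm{\hat y_{\varrho h}-y_\varrho}_{L_2(\Omega)}$, which is (up to the already-controlled $\norm{y_\varrho-y_d}$) exactly the quantity the theorem is about, so any bound on it of the form $c\,h\,\norm{\nabla\hat p_{\varrho h}}$ feeds back as $\frac{1}{h^{2}}\cdot c\,h\,\norm{\nabla\hat p_{\varrho h}}\cdot\norm{\hat p_{\varrho h}}_{L_2(\Omega)}\le \frac{c\,c_F}{h}\norm{\nabla\hat p_{\varrho h}}^2$, a term of size $h^{-1}$ times the left-hand side that cannot be absorbed; the $O(1)$ bound $\norm{\hat p_{\varrho h}}_h\le\frac12\norm{y_d}_{L_2(\Omega)}$ from Lemma~\ref{lem:l2-error-lumped} does not help either, since it still costs a full power of $h$. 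Escaping this would require an Aubin--Nitsche duality bound in $H^{-1}$ for $\hat y_{\varrho h}-y_\varrho$, which you do not have. The paper sidesteps the circularity entirely by splitting $\skpr{\hat p_{\varrho h},e_p}_h=\norm{e_p}_h^2+\skpr{\tilde p_{\varrho h},e_p}_h$, so that the consistency error is anchored to the \emph{consistent} discrete adjoint $\tilde p_{\varrho h}$; its gradient is then bounded with no reference to the lumped solution, via $\norm{\nabla\tilde p_{\varrho h}}\le \norm{\nabla\tilde p_\varrho}+\norm{\nabla(\tilde p_{\varrho h}-\tilde p_\varrho)}$, Cea's lemma for the non-lumped scheme, and Lemma~\ref{lem:regularization-estimates}. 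Your proof becomes correct if you make this one substitution at the start of the energy argument; as written, the a~priori bound on $\norm{\nabla\hat p_{\varrho h}}_{L_2(\Omega)}$ is unproved and, by the route you sketch, unprovable.
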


\begin{proof}
  Let $(y_{\varrho h},\tilde p_{\varrho h}) \in V_h\times V_h$ be the unique
  solution of \eqref{LLSY:Eqn:DVF-optimality-system-scaled1} and
  \eqref{LLSY:Eqn:DVF-optimality-system-scaled2}. By the triangle
  inequality, we get that
  \begin{align*}
    \norm{\hat y_{\varrho h}-y_d}_{L_2(\Omega)} \leq
    \norm{\hat y_{\varrho h}-y_{\varrho h}}_{L_2(\Omega)} +
    \norm{y_{\varrho h}-y_d}_{L_2(\Omega)}.
  \end{align*}
  By Theorem~\ref{thm:error-yy_d}, the second term fulfils the estimate.
  Thus, it is sufficient to bound the first term. Therefore, 
  subtracting the variational formulation
  \eqref{LLSY:Eqn:DVF-optimality-system-scaled1} and
  \eqref{LLSY:Eqn:DVF-optimality-system-scaled2} from  
  \eqref{LLSY:Eqn:DVF-optimality-system-scaled-lumped1} and
  \eqref{LLSY:Eqn:DVF-optimality-system-scaled-lumped2}
  with $v_h=\hat y_{\varrho h}-y_{\varrho h}$ and
  $q_h=\hat p_{\varrho h}-\tilde p_{\varrho h}$,
  we obtain the equalities
  \begin{align*}
    \frac{1}{\sqrt{\varrho}} \, \norm{\hat y_{\varrho h}-
    & y_{\varrho h}}_{L_2(\Omega)}^2 \, = \,
      \frac{1}{\sqrt{\varrho}} \, \skpr{\hat y_{\varrho h}-y_{\varrho h},
      \hat y_{\varrho h}-y_{\varrho h}}_{L_2(\Omega)} \\
    & = \skpr{\nabla(\hat p_{\varrho h}-\tilde p_{\varrho h}),
      \nabla (\hat y_{\varrho h}-y_{\varrho h})}_{L_2(\Omega)} \\
    & = \frac{1}{\sqrt{\varrho}} \, \left( \skpr{\tilde p_{\varrho h},
      \hat p_{\varrho h}-\tilde p_{\varrho h}}_{L_2(\Omega)} -
      \skpr{\hat p_{\varrho h},\hat p_{\varrho h}-\tilde p_{\varrho h}}_h
      \right) \\
    & = \frac{1}{\sqrt{\varrho}} \, \left( \skpr{\tilde p_{\varrho h},
      \hat p_{\varrho h}-\tilde p_{\varrho h}}_{L_2(\Omega)} -
      \norm{\hat p_{\varrho h}-\tilde p_{\varrho h}}_h^2 -
      \skpr{\tilde p_{\varrho h},\hat p_{\varrho h}-\tilde p_{\varrho h}}_h
      \right). 
  \end{align*}
  Multiplying by $\sqrt{\varrho}$ and using
  Lemma \ref{lem:lumped-mass-and-interpolation}, we further get 
  \begin{align*}
    \norm{\hat y_{\varrho h}-y_{\varrho h}}_{L_2(\Omega)}^2
    & + \norm{\hat p_{\varrho h}-\tilde p_{\varrho h}}_{L_2(\Omega)}^2
      \leq \norm{\hat y_{\varrho h}-y_{\varrho h}}_{L_2(\Omega)}^2 +
      \norm{\hat p_{\varrho h}-\tilde p_{\varrho h}}_{h}^2 \\[1mm]
    & = \skpr{\tilde p_{\varrho h},
      \hat p_{\varrho h}-\tilde p_{\varrho h}}_{L_2(\Omega)} -
      \skpr{\tilde p_{\varrho h},\hat p_{\varrho h}-\tilde p_{\varrho h}}_h \\
    & = \int_\Omega \left[ \tilde p_{\varrho h}(x)
      (\hat p_{\varrho h}(x)-\tilde p_{\varrho h}(x)) -
      I_h^1(\tilde p_{\varrho h}(\hat p_{\varrho h}-\tilde p_{\varrho h})(x)
      \right] \, dx . 
  \end{align*}
  With Lemma \ref{lem:interpolation-error}, choosing
  $p_h = \tilde p_{\varrho h}$ and $q_h=\hat p_{\varrho h}-\tilde p_{\varrho h}$,
  we estimate, 
  %\olaf{
  for some $\varepsilon > 0$ to be specified,
  %}
  \begin{align*}
    \norm{\hat y_{\varrho h}-y_{\varrho h}}_{L_2(\Omega)}^2
    & + \norm{\hat p_{\varrho h} - \tilde p_{\varrho h}}_{L_2(\Omega)}^2 \\
    & \leq c \, h^2 \, \left( \varepsilon^2 \,
      \norm{\nabla \tilde p_{\varrho h}}_{L_2(\Omega)}^2 +
      \frac{1}{\varepsilon^2} \,
      \norm{\nabla(\hat p_{\varrho h}-\tilde p_{\varrho h})}_{L_2(\Omega)}^2
      \right).
  \end{align*}
  Using an inverse inequality, we estimate the second term by 
  \begin{align*}
    \frac{ch^2}{\varepsilon^2} \,
    \norm{\nabla(\hat p_{\varrho h}-\tilde p_{\varrho h})}_{L_2(\Omega)}^2
    \leq \frac{cc_I}{\varepsilon^2} \,
    \norm{\hat p_{\varrho h}-\tilde p_{\varrho h}}_{L_2(\Omega)}^2 =
    \norm{\hat p_{\varrho h}-\tilde p_{\varrho h}}_{L_2(\Omega)}^2,
  \end{align*}
  when choosing $\varepsilon = \sqrt{cc_I}$. Thus, it holds 
  \begin{align}\label{eq:thm1-ineq1}
    \norm{\hat y_{\varrho h}-y_{\varrho h}}_{L_2(\Omega)}^2 \leq
    \tilde c \, h^2 \, \norm{\nabla \tilde p_{\varrho h}}_{L_2(\Omega)}^2.
  \end{align}
  Now it is sufficient to bound
  $\norm{\nabla \tilde p_{\varrho h}}_{L_2(\Omega)}$ suitably. Let
  $(y_\varrho,\tilde p_\varrho)\in H_0^1(\Omega)\times H_0^1(\Omega)$ be
  the unique solution of the coupled variational formulation
  \eqref{LLSY:Eqn:VF-optimality-system1} and
  \eqref{LLSY:Eqn:VF-optimality-system2}.
  Using the triangle inequality and the trivial inequality
  $(a+b)^2\leq 2(a^2+b^2)$, we get
  \begin{align}\label{eq:thm1-ineq5}
    \norm{\nabla \tilde p_{\varrho h}}_{L_2(\Omega)}^2 \leq
    2 \, \Big( \norm{\nabla \tilde p_\varrho}_{L_2(\Omega)}^2 +
    \norm{\nabla (\tilde p_{\varrho h}-\tilde p_{\varrho})}_{L_2(\Omega)}^2
    \Big). 
  \end{align}
%{\color{blue} 
  For $(y_\varrho, \widetilde p_{\varrho}) \in
    H_0^1(\Omega)\times H_0^1(\Omega)$ and
    $(y_{\varrho h},\widetilde p_{\varrho h})\in V_h\times V_h$ as solutions
    of
    \eqref{LLSY:Eqn:VF-optimality-system1}-\eqref{LLSY:Eqn:VF-optimality-system2}
    and
    \eqref{LLSY:Eqn:DVF-optimality-system-scaled1}-\eqref{LLSY:Eqn:DVF-optimality-system-scaled2},
    respectively, we can show, as in the proof of
    \cite[Theorem 1]{LLSY:LangerLoescherSteinbachYang:2023}, using an
    inverse inequality and $\varrho =h^4$, that Cea's Lemma 
    \begin{eqnarray*}
      && h^{-2} \, \norm{y_\varrho-y_{\varrho h}}_{L_2(\Omega)}^2 +
         \norm{\nabla (y_\varrho - y_{\varrho h})}_{L_2(\Omega)}^2 \\
      && \hspace{10mm} + \, h^{-2} \,
         \norm{\tilde p_\varrho-\tilde p_{\varrho h}}_{L_2(\Omega)}^2 +
         \norm{\nabla (\tilde p_\varrho -\tilde p_{\varrho h})}_{L_2(\Omega)}^2\\
      &&\hspace{20mm}  \leq \, c \, \Big[ h^{-2} \,
         \norm{y_\varrho -v_h}_{L_2(\Omega)}^2 +
         \norm{\nabla (y_\varrho -v_h)}_{L_2(\Omega)}^2 \\
      && \hspace{30mm} + \, h^{-2} \,
         \norm{\tilde p_\varrho-q_h}_{L_2(\Omega)}^2 +
         \norm{\nabla(\tilde p_\varrho-q_h)}_{L_2(\Omega)}^2 \Big]
    \end{eqnarray*}
    holds true for all $(v_h,q_h)\in V_h\times V_h$. Further, using best
    approximation results, we get, for $y_d\in H_0^1(\Omega)$, that 
    \begin{eqnarray*}
      && h^{-2} \, \norm{y_\varrho-y_{\varrho h}}_{L_2(\Omega)}^2 +
         \norm{\nabla (y_\varrho - y_{\varrho h})}_{L_2(\Omega)}^2\\
      && \hspace{10mm} + \, h^{-2} \,
         \norm{\tilde p_\varrho-\tilde p_{\varrho h}}_{L_2(\Omega)}^2 +
         \norm{\nabla (\tilde p_\varrho -\tilde p_{\varrho h})}_{L_2(\Omega)}^2
         \, \leq \, c \, |y_d|_{H^1(\Omega)}^2,
    \end{eqnarray*}
    and, for $y_d\in H_0^1(\Omega)\cap H^2(\Omega)$, that
    \begin{eqnarray*}
      && h^{-2} \, \norm{y_\varrho-y_{\varrho h}}_{L_2(\Omega)}^2 +
         \norm{\nabla (y_\varrho - y_{\varrho h})}_{L_2(\Omega)}^2\\
      &&\hspace{10mm} + \, h^{-2} \,
         \norm{\tilde p_\varrho-\tilde p_{\varrho h}}_{L_2(\Omega)}^2 +
         \norm{\nabla (\tilde p_\varrho -\tilde p_{\varrho h})}_{L_2(\Omega)}^2
         \, \leq \, c \, h^2 \, |y_d|_{H^2(\Omega)}^2.
    \end{eqnarray*}
    From this we immediately conclude that 
    \begin{align}\label{eq:thm1-ineq2}
      \norm{\nabla (\tilde p_{\varrho h}-\tilde p_\varrho)}_{L_2(\Omega)}^2
      \leq
      \begin{cases}
        c \, |y_d|_{H^1(\Omega)}^2,
        &\text{for } y_d\in H^1_0(\Omega),\\[1mm]
	c \, h^2 \, |y_d|_{H^2(\Omega)}^2,
        &\text{for } y_d\in  H_0^1(\Omega)\cap H^2(\Omega). 	
      \end{cases}
    \end{align}
%}
  For the first term, we use the first inequality of
  Lemma \ref{lem:regularization-estimates} for $y_d\in H_0^1(\Omega)$
  to estimate
  \begin{align*}
    \norm{\nabla \tilde p_{\varrho}}_{L_2(\Omega)}^2
    & = \skpr{\nabla \tilde p_\varrho,\nabla \tilde p_\varrho}_{L_2(\Omega)}
      = \frac{1}{\sqrt{\varrho}} \,
      \skpr{y_\varrho-y_d,\tilde p_\varrho}_{L_2(\Omega)} \\
    &\leq \frac{1}{\sqrt{\varrho}} \, \norm{y_\varrho-y_d}_{H^{-1}(\Omega)}
      \norm{\nabla \tilde p_{\varrho}}_{L_2(\Omega)} \, \leq \,
      c \, |y_d|_{H^1(\Omega)} \norm{\nabla \tilde p_\varrho}_{L_2(\Omega)}, 
  \end{align*}
  from which we conclude 
  \begin{align}\label{eq:thm1-ineq3}
    \norm{\nabla \tilde p_\varrho}_{L_2(\Omega)} \leq
    c \, |y_d|_{H^1(\Omega)} . 
  \end{align}
  For a convex domain $\Omega$, we have $H^{\Delta}(\Omega)=H^2(\Omega)$.
  Thus $y_d\in H^2(\Omega)\cap H_0^1(\Omega)$, and we get with the same
  reasoning, using the second inequality of
  Lemma \ref{lem:regularization-estimates},
  \begin{align*}
    \norm{\nabla \tilde p_\varrho}_{L_2(\Omega)}^2
    & = \frac{1}{\sqrt{\varrho}} \,
      \skpr{y_\varrho -y_d,\tilde p_\varrho}_{L_2(\Omega)}\\
    &\leq \frac{1}{\sqrt{\varrho}} \,
      \norm{y_\varrho-y_d}_{L_2(\Omega)}\norm{\tilde p_\varrho}_{L_2(\Omega)}
      \, \leq \, |y_d|_{H^2(\Omega)}\norm{\tilde p_\varrho}_{L_2(\Omega)}.
  \end{align*}
  Now, we recall that, from the optimality system
  \eqref{LLSY:Eqn:optimality-system1}-\eqref{LLSY:Eqn:VF-optimality-system2},
  we have
  \begin{align*}
    \tilde p_\varrho = \frac{1}{\sqrt{\varrho}} \, p_\varrho =
    - \sqrt{\varrho} \, u_\varrho =
    \sqrt{\varrho} \, \Delta y_\varrho \quad \text{in } \Omega,
  \end{align*}
  and thus, with Lemma \ref{lem:regularization-estimates}, we obtain
  \begin{align*}
    \norm{\tilde p_\varrho}_{L_2(\Omega)} =
    \sqrt{\varrho} \, \norm{\Delta y_\varrho}_{L_2(\Omega)} \leq
    \sqrt{\varrho} \, \norm{\Delta y_d}_{L_2(\Omega)} \leq
    \sqrt{\varrho} \, |y_d|_{H^2(\Omega)}.
  \end{align*}
  Thus, for $\varrho = h^4$ and $y_d \in H^2(\Omega)\cap H_0^1(\Omega)$,
  we arrive at the estimate
  \begin{align}\label{eq:thm1-ineq4}
    \norm{\nabla \tilde p_\varrho}_{L_2(\Omega)} \leq \,
    h \, |y_d|_{H^2(\Omega)} .
  \end{align}
  Now, combining \eqref{eq:thm1-ineq1} with \eqref{eq:thm1-ineq5},
  \eqref{eq:thm1-ineq2},\eqref{eq:thm1-ineq3}, and
  \eqref{eq:thm1-ineq4}, this gives
  \begin{align*}
    \norm{\hat y_{\varrho h}-y_{\varrho h}}_{L_2(\Omega)} \leq h \,
    \norm{\nabla \tilde p_{\varrho h}}_{L_2(\Omega)} \leq
    \begin{cases}
      c \, h \, |y_d|_{H^1(\Omega)},
      & \text{for }y_d\in H_0^1(\Omega),\\[1mm]
      c \, h^2 \, |y_d|_{H^2(\Omega)},
      & \text{for }y_d\in H^2(\Omega)\cap H_0^1(\Omega). 
    \end{cases}      
  \end{align*}
%Recall, that $\sqrt{\varrho} = h^2$, thus for the first term, we have that 
%\begin{align*}
%	\underbrace{\frac{c\varepsilon^2}{2}}_{:=\tilde c}h^2\norm{\nabla p_{\varrho h}}_{L_2(\Omega)}^2&\leq 
%	\tilde c c_I\norm{p_{\varrho h}}_{L_2(\Omega)}^2 =\tilde cc_I \skpr{p_{\varrho h},p_{\varrho h}}_{L_2(\Omega)}\\& = \tilde cc_I(-h^2)\skpr{\nabla y_{\varrho h},\nabla p_{\varrho h}}_{L_2(\Omega)} = \tilde cc_I\skpr{y_d-y_{\varrho h},y_{\varrho h}}_{L_2(\Omega)}\\
%	& = \tilde cc_I(-\norm{y_d-y_{\varrho h}}_{L_2(\Omega)}^2 + \skpr{y_d-y_{\varrho h},y_d}_{L_2(\Omega)})\\
%	&\leq \tilde cc_I\norm{y_{\varrho h}-y_d}_{L_2(\Omega)}\norm{y_d}_{L_2(\Omega)}. 
%\end{align*}
%and thus $$ \norm{p_{\varrho h}}_{L_2(\Omega)}$$
%
%Still open! bound $\norm{\nabla \tilde p_{\varrho}h}$ as Steinbach to get the rates
\end{proof}

\begin{lemma}\label{lem:l2-error-lumped}
  Let $(\hat y_{\varrho h},\hat p_{\varrho h})\in V_h\times V_h$ be the
  unique solution of the coupled variational formulation
  \eqref{LLSY:Eqn:DVF-optimality-system-scaled-lumped1} and
  \eqref{LLSY:Eqn:DVF-optimality-system-scaled-lumped2}.
  Then, for $y_d\in L_2(\Omega)$, we get the error estimate
  \begin{align*}
    \norm{\hat y_{\varrho h}-y_d}_{L_2(\Omega)} \leq \norm{y_d}_{L_2(\Omega)}.
  \end{align*}
\end{lemma}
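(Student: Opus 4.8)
The plan is to derive an energy identity by testing the two equations of the lumped variational formulation \eqref{LLSY:Eqn:DVF-optimality-system-scaled-lumped1}--\eqref{LLSY:Eqn:DVF-optimality-system-scaled-lumped2} with the solution components themselves, and then to read off the claimed bound by a plain expansion of the squared $L_2$ norm. No regularity of $y_d$ beyond $y_d\in L_2(\Omega)$ will be needed, and the balancing choice $\varrho=h^4$ plays no role whatsoever; the statement is a pure stability estimate valid for every $\varrho>0$.

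First I would choose $q_h=\hat p_{\varrho h}$ in \eqref{LLSY:Eqn:DVF-optimality-system-scaled-lumped1} and $v_h=\hat y_{\varrho h}$ in \eqref{LLSY:Eqn:DVF-optimality-system-scaled-lumped2}. This yields
\[
  \frac{1}{\sqrt{\varrho}}\,\norm{\hat p_{\varrho h}}_h^2
  + \skpr{\nabla \hat y_{\varrho h},\nabla \hat p_{\varrho h}}_{L_2(\Omega)} = 0
\]
and
\[
  -\skpr{\nabla \hat p_{\varrho h},\nabla \hat y_{\varrho h}}_{L_2(\Omega)}
  + \frac{1}{\sqrt{\varrho}}\,\norm{\hat y_{\varrho h}}_{L_2(\Omega)}^2
  = \frac{1}{\sqrt{\varrho}}\,\skpr{y_d,\hat y_{\varrho h}}_{L_2(\Omega)}.
\]
Adding the two relations, the symmetric gradient cross terms cancel, and multiplying by $\sqrt{\varrho}$ gives the energy identity
\[
  \norm{\hat y_{\varrho h}}_{L_2(\Omega)}^2 + \norm{\hat p_{\varrho h}}_h^2
  = \skpr{y_d,\hat y_{\varrho h}}_{L_2(\Omega)}.
\]

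Second, I would expand
\[
  \norm{\hat y_{\varrho h}-y_d}_{L_2(\Omega)}^2
  = \norm{\hat y_{\varrho h}}_{L_2(\Omega)}^2
  - 2\,\skpr{y_d,\hat y_{\varrho h}}_{L_2(\Omega)}
  + \norm{y_d}_{L_2(\Omega)}^2
\]
and substitute the energy identity for the cross term, obtaining
\[
  \norm{\hat y_{\varrho h}-y_d}_{L_2(\Omega)}^2
  = \norm{y_d}_{L_2(\Omega)}^2 - \norm{\hat y_{\varrho h}}_{L_2(\Omega)}^2
  - 2\,\norm{\hat p_{\varrho h}}_h^2.
\]
Since $\norm{\hat p_{\varrho h}}_h^2\ge 0$ (the lumped quadratic form is nonnegative, as guaranteed by the left inequality in \eqref{eq:bound L2 norm discrete inner product} together with Lemma \ref{lem:lumped-mass-and-interpolation}), dropping the two nonpositive terms on the right-hand side delivers the claim at once.

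There is essentially no serious obstacle here: the only points requiring care are the bookkeeping of signs so that the two gradient terms cancel after adding the tested equations, and the observation that $\norm{\cdot}_h^2$ is a genuine (nonnegative) quadratic form. The whole argument is the discrete, mass-lumped analogue of the standard stability bound for the continuous control problem, which follows by comparing the optimal cost with the trivial admissible pair $(y,u)=(0,0)$.
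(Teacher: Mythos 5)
Your proposal is correct and follows essentially the same route as the paper: test with $q_h=\hat p_{\varrho h}$, $v_h=\hat y_{\varrho h}$, add to cancel the gradient cross terms, and exploit the resulting energy identity $\norm{\hat y_{\varrho h}}_{L_2(\Omega)}^2+\norm{\hat p_{\varrho h}}_h^2=\skpr{y_d,\hat y_{\varrho h}}_{L_2(\Omega)}$. The only (immaterial) difference is the final step: the paper rewrites the identity as $\norm{\hat p_{\varrho h}}_h^2+\norm{\hat y_{\varrho h}-y_d}_{L_2(\Omega)}^2=\skpr{y_d-\hat y_{\varrho h},y_d}_{L_2(\Omega)}$ and applies Cauchy--Schwarz, whereas you eliminate the cross term completely and drop the nonnegative remainders, which yields the same bound.
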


\begin{proof}
  Choosing $q_h=\hat p_{\varrho h}$ and $v_h=\hat y_{\varrho h}$ in 
  \eqref{LLSY:Eqn:DVF-optimality-system-scaled-lumped1} and
  \eqref{LLSY:Eqn:DVF-optimality-system-scaled-lumped2},
  summing up the equations, and multiplying with $\sqrt{\varrho}$, this gives
  \begin{align*}
    \skpr{\hat p_{\varrho h},\hat p_{\varrho}}_h +
    \skpr{\hat y_{\varrho h},\hat y_{\varrho h}}_{L_2(\Omega)} =
    \skpr{y_d,\hat y_{\varrho h}}_{L_2(\Omega)}. 
  \end{align*}
  Rewriting this equality gives
  \begin{align*}
    \skpr{\hat p_{\varrho h},\hat p_{\varrho h}}_h +
    \skpr{\hat y_{\varrho h}-y_d,\hat y_{\varrho h}-y_d}_{L_2(\Omega)} =
    \skpr{y_d-\hat y_{\varrho h},y_d}_{L_2(\Omega)}, 
  \end{align*}
  which yields the desired estimate. 
\end{proof}

\begin{theorem}\label{thm:hserror-lumped}
  Let $(\hat y_{\varrho h},\hat p_{\varrho h})\in V_h\times V_h$ be the
  unique solution of the coupled variational formulation
  \eqref{LLSY:Eqn:DVF-optimality-system-scaled-lumped1}-\eqref{LLSY:Eqn:DVF-optimality-system-scaled-lumped2}.
  For $y_d\in H_0^s(\Omega)$, $s\in [0,1]$, and
  $y_d\in H^s(\Omega)\cap H_0^1(\Omega)$, $s\in (1,2]$, there holds
  the error estimate
  \begin{align}
    \label{LLSY:Eqn:HsDiscretizationErrorEstimate}
    \norm{\hat y_{\varrho h}-y_d}_{L_2(\Omega)} \, \leq \,
    c \, h^s \, \norm{y_d}_{H^s(\Omega)}. 	
  \end{align}
\end{theorem}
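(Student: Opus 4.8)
The plan is to regard $\hat y_{\varrho h}$ as the image of $y_d$ under a linear solution operator and to recover the full scale of estimates by interpolating between the three endpoint bounds already established. Since the system matrix of \eqref{LLSY:Eqn:DVF-optimality-system-scaled-lumped1}--\eqref{LLSY:Eqn:DVF-optimality-system-scaled-lumped2} does not depend on $y_d$ while the right-hand side depends on it linearly, the map $y_d\mapsto \hat y_{\varrho h}$ is linear; hence the error map $G_{\varrho h}\colon y_d\mapsto \hat y_{\varrho h}-y_d$ is a linear operator that is bounded on $L_2(\Omega)$.

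First I would collect the endpoint operator norms. Lemma~\ref{lem:l2-error-lumped} gives $\norm{G_{\varrho h}}_{L_2(\Omega)\to L_2(\Omega)}\le 1$, which is the case $s=0$. Theorem~\ref{thm:h1h2error-lumped} supplies the two further endpoints $\norm{G_{\varrho h}}_{H_0^1(\Omega)\to L_2(\Omega)}\le c\,h$ and, for convex $\Omega$, $\norm{G_{\varrho h}}_{H^2(\Omega)\cap H_0^1(\Omega)\to L_2(\Omega)}\le c\,h^2$, i.e.\ the cases $s=1$ and $s=2$.

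Next I would invoke the classical interpolation theorem for linear operators: if a fixed linear operator maps $X_0\to L_2(\Omega)$ with norm $M_0$ and $X_1\to L_2(\Omega)$ with norm $M_1$, then it maps the intermediate space $[X_0,X_1]_\theta$ into $L_2(\Omega)$ with norm at most $M_0^{1-\theta}M_1^{\theta}$. For $s\in[0,1]$ I would take $X_0=L_2(\Omega)$, $X_1=H_0^1(\Omega)$, $\theta=s$, using the identification $[L_2(\Omega),H_0^1(\Omega)]_s=H_0^s(\Omega)$, to get $\norm{G_{\varrho h}}_{H_0^s\to L_2}\le 1^{1-s}(c h)^{s}=c\,h^s$. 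For $s\in(1,2]$ I would take $X_0=H_0^1(\Omega)$, $X_1=H^2(\Omega)\cap H_0^1(\Omega)$, $\theta=s-1$, with $[H_0^1,H^2\cap H_0^1]_{s-1}=H^s(\Omega)\cap H_0^1(\Omega)$, to obtain $\norm{G_{\varrho h}}_{H^s\cap H_0^1\to L_2}\le (c h)^{2-s}(c h^2)^{s-1}=c\,h^s$. Applying the resulting operator bound to $y_d$ then yields \eqref{LLSY:Eqn:HsDiscretizationErrorEstimate}.

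The main obstacle is the precise identification of the intermediate spaces rather than any genuinely new estimate: one must confirm that $[L_2(\Omega),H_0^1(\Omega)]_s$ is exactly $H_0^s(\Omega)$ for $s\in[0,1]$ and that $[H_0^1(\Omega),H^2(\Omega)\cap H_0^1(\Omega)]_{s-1}$ is $H^s(\Omega)\cap H_0^1(\Omega)$ for $s\in(1,2]$, citing a standard reference, while keeping track of the delicate borderline $s=1/2$ (where the Lions--Magenes space must be handled by the standard convention) and of the matching of the two ranges at $s=1$. One must also carry along the convexity assumption, needed for the $s=2$ endpoint, throughout the range $s\in(1,2]$.
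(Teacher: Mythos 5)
Your proposal is correct and follows essentially the same route as the paper: the authors likewise obtain \eqref{LLSY:Eqn:HsDiscretizationErrorEstimate} as a direct consequence of Lemma~\ref{lem:l2-error-lumped} (the $s=0$ endpoint) and Theorem~\ref{thm:h1h2error-lumped} (the $s=1$ and $s=2$ endpoints), combined with a space interpolation argument for the linear error map. Your additional remarks on identifying the intermediate spaces and carrying the convexity assumption through $s\in(1,2]$ are exactly the points one must check to make that one-line argument rigorous.
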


\begin{proof}
  This is a direct consequence of Theorem~\ref{thm:h1h2error-lumped}
  and Lemma~\ref{lem:l2-error-lumped},
  together with a space interpolation argument. 
\end{proof}
	
%%%%%%%%%%%%%%%%%%%%%%%%%%%%%%%%%%%%%%%%%%%%%%%%%%%%%%%%%%%%%%%%%%%%%%%%%%%%

\section{Nested PCG Iteration}
\label{LLSY:sec:NestedPCGIteration}	
Finally, we have to solve the spd mass-lumped Schur-complement system \eqref{LLSY:Eqn:SchurComplementSystemLumpedMass}
that we now write in the compact from: find $\mathbf{y}_h \in \mathbb{R}^{n_h}$ such that
\begin{equation}
 \label{LLSY:Eqn:SchurComplementSystemMassLumpedCompact}
		%S_h \mathbf{y}  := (\varrho K_h^\top D_h^{-1} K_h + M_h) \mathbf{y} = \mathbf{f}, 
		%S_h \olaf{\hat{\mathbf{y}}_h} = \mathbf{y}_{dh},
		S_h \mathbf{y}_h = \mathbf{y}_{dh},
\end{equation}
%
%arising from the elimination of $\mathbf{p}$ from {\bf (sid system)},
where 
$S_h = \varrho K_h D_h^{-1} K_h + M_h$, and 
$D_h$ is the  lumped mass matrix $\text{lump}(M_h)$.
For simplicity, we omit the hat over  $\mathbf{y}_h$ in \eqref{LLSY:Eqn:SchurComplementSystemMassLumpedCompact}
and throughout this section.
The fast solution of the symmetric and indefinite system \eqref{LLSY:Eqn:DiscreteReducedOptimalitySystem} 
with the original mass matrix $M_h$ 
instead of $D_h$ 
was studied in \cite{LLSY:LangerLoescherSteinbachYang:2023}.
Since the matrix $D_h$ is diagonal, the matrix-by-vector multiplication $S_h * \mathbf{y}_h$ 
can now be performed efficiently. 
Therefore, we can use the PCG method for solving \eqref{LLSY:Eqn:SchurComplementSystemMassLumpedCompact}.
Moreover, it turns out that $D_h$ can also serve as preconditioner in the case 
$\varrho = h^4$ that leads to the optimally balanced estimate of $\|{\hat y}_{\varrho h}-y_d\|_{L_2(\Omega)}$
as was shown in Section~\ref{LLSY:sec:MassLumpingAndErroAnalysis}; 
see Theorem~\ref{thm:h1h2error-lumped} and Theorem~\ref{thm:hserror-lumped}.
First of all, we can easily show that $D_h$ is spectrally equivalent to $M_h$,
i.e., there exist positive, $h$-independent constants 
$\underline{c}_\text{\tiny MD}$ and $\overline{c}_\text{\tiny MD}$
such that

%\olaf{The next inequality is a direct consequence of (20), Lemma 1, page 5.}

\begin{equation}
\label{LLSY:Eqn:SpectralEquivalenceDMD}
 \underline{c}_\text{\tiny MD} D_h \le  M_h \le \overline{c}_\text{\tiny MD} D_h,
\end{equation}
where $\underline{c}_\text{\tiny MD} = \lambda_\text{\tiny min} = \lambda_\text{\tiny min}(D_\tau^{-1}M_\tau) = 1/(d+2)$
and  $\overline{c}_\text{\tiny MD} = \lambda_\text{\tiny max} = \lambda_\text{\tiny max}(D_\tau^{-1}M_\tau) = 1$
are the minimal eigenvalue and maximal eigenvalue of the small generalized eigenvalue problem
$M_\tau \mathbf{v}_\tau = \lambda D_\tau \mathbf{v}_\tau$ in $\mathbb{R}^{d+1}$, respectively.
$M_\tau$ and $D_\tau$ denote resp. the mass matrix and the lumped mass matrix 
corresponding to the reference element (unit simplex) $\tau$ to which every element $\tau_e$ from $\mathcal{T}_h$
is mapped by an affine-linear mapping $x=x_{e_1}+J_e\eta$.
We note that the spectral equivalence inequalities \eqref{LLSY:Eqn:SpectralEquivalenceDMD} 
are nothing but the algebraic version of the inequalities \eqref{eq:bound L2 norm discrete inner product} 
where the constants were already explicitly computed.

In order to estimate the Schur complement $S_h = \varrho K_h D_h^{-1} K_h + M_h$ by the 
mass matrix $M_h$ from below and above in the spectral sense, 
it is obviously enough to estimate $\varrho K_h D_h^{-1} K_h$ from above by $M_h$.
Using the spectral equivalence inequalities \eqref{LLSY:Eqn:SpectralEquivalenceDMD}, 
Cauchy's inequality, local inverse inequalities, and 
%the choise 
$\varrho = h^4$,
we get
\begin{eqnarray}\label{LLSY:Eqn:SpectralEquivalenceSD}
  \nonumber
  (\varrho K_h D_h^{-1} K_h\mathbf{v}_h,\mathbf{v}_h) 
  & = & \varrho \, (D_h^{-1} K_h\mathbf{v}_h,K_h\mathbf{v}_h)
        \, \le \, \overline{c}_\text{\tiny MD} \, \varrho \,
        (M_h^{-1} K_h\mathbf{v}_h,K_h\mathbf{v}_h)\\[1mm] \nonumber
  & = &  \overline{c}_\text{\tiny MD}
        (K_h (\varrho^{-1} M_h)^{-1} K_h\mathbf{v}_h,\mathbf{v}_h) \\[1mm]
  \nonumber
  & = & \overline{c}_\text{\tiny MD} \sup_{\mathbf{q}_h \in \mathbb{R}^{n_h}} 
        \frac{(K_h\mathbf{v}_h,\mathbf{q}_h)^2}
        {(\varrho^{-1} M_h\mathbf{q}_h,\mathbf{q}_h)} \\[1mm] \nonumber
  & = & \overline{c}_\text{\tiny MD} \sup_{q_h \in V_h}  
        \frac{\displaystyle \left[ \int_\Omega
        \varrho^{1/4} \nabla v_h \cdot \varrho^{-1/4} \nabla q_h dx
        \right]^2}{\displaystyle
        \int_\Omega \varrho^{-1}[q_h(x)]^2 dx}\\[2mm] \nonumber
  & \le & \overline{c}_\text{\tiny MD} \sup_{q_h \in V_h}
        \frac{\|\varrho^{1/4} \nabla v_h\|_{L_2(\Omega)}^2
        \|\varrho^{-1/4} \nabla q_h\|_{L_2(\Omega)}^2}
        {\displaystyle \int_\Omega \varrho^{-1}[q_h(x)]^2 dx}\\ \nonumber
  & = & \overline{c}_\text{\tiny MD} \,
       \|\varrho^{1/4} \nabla v_h\|_{L_2(\Omega)}^2 \sup_{q_h \in V_h}  
       \frac{\displaystyle \sum\limits_{\tau_e \in \mathcal{T}_h}
       h^{-2} \int_{\tau_e} |\nabla q_h|^2 dx}
       {\displaystyle \int_\Omega \varrho^{-1}[q_h(x)]^2 dx}\\ \nonumber
  & \le & \overline{c}_\text{\tiny MD} \,
          \|\varrho^{1/4} \nabla v_h\|_{L_2(\Omega)}^2 \sup_{q_h \in V_h}
          \frac{\displaystyle \sum\limits_{\tau_e \in \mathcal{T}_h}
          h^{-4} \,c_\text{\tiny inv}^2 \int_{\tau_e} (q_h)^2 \, dx}
          {\displaystyle \sum_{\tau_e \in \mathcal{T}_h} h^{-4}
          \int_{\tau_e} (q_h)^2 dx} \\ \nonumber
  & = & c_\text{\tiny inv}^2 \, \overline{c}_\text{\tiny MD} \,
        \|\varrho^{1/4} \nabla v_h\|_{L_2(\Omega)}^2\\[1mm] \nonumber
  & = &c_\text{\tiny inv}^2 \, \overline{c}_\text{\tiny MD} \, 
        \sum\limits_{\tau_e \in \mathcal{T}_h} h^2 \int_{\tau_e}
        |\nabla v_h|^2 dx \\
  & \le & c_\text{\tiny inv}^4 \,\overline{c}_\text{\tiny MD} \,  \nonumber
        \sum\limits_{\tau_e \in \mathcal{T}_h} \int_{\tau_e} (v_h)^2 dx \\ 
  & = & c_\text{\tiny inv}^4 \, \overline{c}_\text{\tiny MD} \,
        (M_h \mathbf{v}_h,\mathbf{v}_h),
      \;\; \forall \mathbf{v}_h \in V_h,
\end{eqnarray}
where $ c_\text{\tiny inv} $ is the universal positive constant in the local inverse inequalities
\begin{equation}
\label{LLSY:Eqn:LocalInverseInequalities}
\|\nabla w_h\|_{L_2(\tau_e)}\le c_\text{\tiny inv} \, h_e^{-1} \,
\|w_h\|_{L_2(\tau_e)}\quad 
                   \forall w_h \in V_h, \; \forall \tau_e \in \mathcal{T}_h.
\end{equation}
Here the local mesh size $h_e$ can be replaced by the global mesh size $h$
since we assumed quasi-uniform and shape-regular mesh $\mathcal{T}_h$.
The local inverse inequalities \eqref{LLSY:Eqn:LocalInverseInequalities} 
can again be proved by mapping $\tau_e$ to the unit simplex $\tau$.
In this way the constant $ c_\text{\tiny inv} $ can even be computed explicitly 
in dependence of the mesh characteristics \cite{LLSY:ChenZhao:2013JCM}.
Therefore, we have just proved the spectral equivalence theorem
that is fundamental for the efficient solution of the spd mass-lumped
Schur-complement system 
\eqref{LLSY:Eqn:SchurComplementSystemMassLumpedCompact} 
by means of PCG iteration.

\begin{theorem}
\label{LLSY:Thm:SpectralEquivalenceDSD}
Let us assume that the mesh $\mathcal{T}_h$ is 
globally quasi-uniform with the global mesh-size $h$,
%UL: Hier brauchen wir nur shape regular !!!
and $\varrho = h^4$. Then the spectral equivalence inequalities
\begin{equation}
 \label{LLSY:Eqn:SpectralEquivalenceDSD}
 \underline{c}_\text{\tiny SD} D_h \le \underline{c}_\text{\tiny SM} M_h 
 \le  S_h = \varrho K_h D_h^{-1} K_h + M_h \le 
 \overline{c}_\text{\tiny SM} M_h \le    \overline{c}_\text{\tiny SD} D_h,
\end{equation}
%
%are valid 
hold
with the spectral equivalence constants
\[
\underline{c}_\text{\tiny SM} = 1, \quad
\overline{c}_\text{\tiny SM} = c_\text{\tiny inv}^4
\overline{c}_\text{\tiny MD} + 1, \quad
\underline{c}_\text{\tiny SD} = \underline{c}_\text{\tiny MD} =
\lambda_\text{\tiny min} = \lambda_\text{\tiny min}(D_T^{-1}M_T) =
\frac{1}{d+2},
\]
and $\overline{c}_\text{\tiny SD} = \overline{c}_\text{\tiny MD}^2 c_\text{\tiny inv}^4 + \overline{c}_\text{\tiny MD}$,
where $\overline{c}_\text{\tiny MD} =  \lambda_\text{\tiny max} = \lambda_\text{\tiny max}(D_T^{-1}M_T) = 1$.
\end{theorem}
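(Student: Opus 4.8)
The plan is to assemble the five-term chain in \eqref{LLSY:Eqn:SpectralEquivalenceDSD} out of three ingredients that are already in hand: the upper bound $(\varrho K_h D_h^{-1} K_h\mathbf{v}_h,\mathbf{v}_h) \le c_\text{\tiny inv}^4 \overline{c}_\text{\tiny MD}(M_h\mathbf{v}_h,\mathbf{v}_h)$ established by the computation \eqref{LLSY:Eqn:SpectralEquivalenceSD}, the trivial lower bound coming from positive semidefiniteness of $\varrho K_h D_h^{-1} K_h$, and the $D_h$--$M_h$ spectral equivalence \eqref{LLSY:Eqn:SpectralEquivalenceDMD}. Since every matrix occurring is spd, the inequalities are read in the L\"owner (energy) sense, so it suffices to verify each of the four links as a scalar estimate for the associated quadratic forms evaluated at an arbitrary $\mathbf{v}_h\in\mathbb{R}^{n_h}$.

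First I would settle the two inner inequalities, i.e., the $M_h$-bounds on the Schur complement. Writing $S_h = M_h + \varrho K_h D_h^{-1} K_h$ and using that the second summand is spd gives at once $(S_h\mathbf{v}_h,\mathbf{v}_h) \ge (M_h\mathbf{v}_h,\mathbf{v}_h)$, which is the lower bound with $\underline{c}_\text{\tiny SM} = 1$. For the upper bound I would simply add $(M_h\mathbf{v}_h,\mathbf{v}_h)$ to both sides of the final estimate in \eqref{LLSY:Eqn:SpectralEquivalenceSD}, obtaining $(S_h\mathbf{v}_h,\mathbf{v}_h) \le (c_\text{\tiny inv}^4\overline{c}_\text{\tiny MD}+1)(M_h\mathbf{v}_h,\mathbf{v}_h)$, which fixes $\overline{c}_\text{\tiny SM} = c_\text{\tiny inv}^4\overline{c}_\text{\tiny MD}+1$.

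The two outer inequalities are then obtained by composing with \eqref{LLSY:Eqn:SpectralEquivalenceDMD}. For the outermost lower bound I would use $\underline{c}_\text{\tiny MD} D_h \le M_h$ to pass from $\underline{c}_\text{\tiny SM} M_h$ down to $\underline{c}_\text{\tiny SD} D_h$ with $\underline{c}_\text{\tiny SD} = \underline{c}_\text{\tiny MD} = 1/(d+2)$; for the outermost upper bound I would use $M_h \le \overline{c}_\text{\tiny MD} D_h$ to pass from $\overline{c}_\text{\tiny SM} M_h$ up to $\overline{c}_\text{\tiny SD} D_h$, which after multiplying out yields $\overline{c}_\text{\tiny SD} = \overline{c}_\text{\tiny SM}\,\overline{c}_\text{\tiny MD} = (c_\text{\tiny inv}^4\overline{c}_\text{\tiny MD}+1)\overline{c}_\text{\tiny MD} = c_\text{\tiny inv}^4\overline{c}_\text{\tiny MD}^2 + \overline{c}_\text{\tiny MD}$, exactly the claimed constant. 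Chaining the four links produces \eqref{LLSY:Eqn:SpectralEquivalenceDSD}.

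I do not expect a genuine obstacle here: the only technically demanding step, namely the upper estimate of $\varrho K_h D_h^{-1} K_h$ by $M_h$ via the local inverse inequality \eqref{LLSY:Eqn:LocalInverseInequalities} and the balancing choice $\varrho = h^4$, has already been carried out in \eqref{LLSY:Eqn:SpectralEquivalenceSD}. The remaining work is purely the bookkeeping of the four constants and checking that they compose correctly; the one point to watch is that the upper $M_h$-bound and the upper $D_h$-bound must be \emph{multiplied} rather than added, which is precisely what produces the product $\overline{c}_\text{\tiny SM}\,\overline{c}_\text{\tiny MD}$ appearing in $\overline{c}_\text{\tiny SD}$.
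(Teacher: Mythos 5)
Your proposal is correct and follows essentially the same route as the paper, whose proof simply states that \eqref{LLSY:Eqn:SpectralEquivalenceDSD} follows immediately from \eqref{LLSY:Eqn:SpectralEquivalenceDMD} and \eqref{LLSY:Eqn:SpectralEquivalenceSD}; you merely spell out the bookkeeping (positive semidefiniteness of $\varrho K_h D_h^{-1}K_h$ for the lower bound, adding $M_h$ for the upper bound, and composing multiplicatively with the $D_h$--$M_h$ equivalence), arriving at exactly the stated constants.
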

\begin{proof}
The  spectral equivalence inequalities \eqref{LLSY:Eqn:SpectralEquivalenceDSD} 
immediately follow from the
inequalities \eqref{LLSY:Eqn:SpectralEquivalenceDMD},
%\eqref{LLSY:Eqn:Lambda_min_max}, 
and \eqref{LLSY:Eqn:SpectralEquivalenceSD}.
\end{proof}
\begin{remark}
%\red{Generalization: $D_{\varrho h}$, $M_{\varrho h}$ $K_{\varrho h}$, $K_h \neq K_h^T$, $B_h^T A_{\varrho h} B_h + M_h$ etc.}
The spectral estimate \eqref{LLSY:Eqn:SpectralEquivalenceSD} can also be proved by Fourier analysis 
when one expands the vectors $\mathbf{v}_h$ into the orthonormal eigenvector basis corresponding 
to the eigenvalue problem $K_h \mathbf{e}_h = \lambda D_h  \mathbf{e}_h$ 
as it was done in {\rm
  \cite{LLSY:LangerLoescherSteinbachYang:2023}} for $D_h = M_h$.
In {\rm \cite{LLSY:LangerLoescherSteinbachYang:2023LSSC}},
we provide a rigorous analysis 
of the variable $L_2$ regularization with a technique that is different from %to
the technique 
used for proving \eqref{LLSY:Eqn:SpectralEquivalenceSD}. 
We note that the latter technique can be used to analyse the case of constant and variable 
energy regularizations for state equations leading to non-symmetric fe
stiffness matrices $K_h$ such as 
convection-diffusion problems as well as parabolic and hyperbolic problems 
when using space-time finite element discretizations; see 
{\rm \cite{LLSY:LangerSteinbachYang:2022arXiv,LLSY:LoescherSteinbach:2022arXiv:2211.02562}}.
%{\bf [REF]}.
\end{remark}

\noindent
Now we can efficiently solve the mass-lumped Schur-complement system \eqref{LLSY:Eqn:SchurComplementSystemLumpedMass}
respectively \eqref{LLSY:Eqn:SchurComplementSystemMassLumpedCompact} by means of 
the PCG methods because, thanks to mass lumping, the matrix-vector multiplication $S_h * \mathbf{y}_h^k$ 
can be performed in asymptotically optimal complexity $O(n_h)$, and, 
at the same time,
%moreover,
the lumped mass matrix $D_h = \text{lump}(M_h)$ is a perfect preconditioner.
More precisely, let $\mathbf{y}_h^k \in \mathbb{R}^{n_h}$ be the $k$th PCG iterate.
Due to the spectral equivalence inequalities 
\eqref{LLSY:Eqn:SpectralEquivalenceDMD} and \eqref{LLSY:Eqn:SpectralEquivalenceDSD},
and the well-known convergence rate estimate for the PCG method 
(see, e.g., \cite[Chapter~13]{LLSY:Steinbach:2008Monograph}),
we can estimate the $L_2$ error $\|\hat y_{\varrho h}- y_{\varrho h}^k\|_{L_2(\Omega)}$ between
the fe functions 
$\hat y_{\varrho h}(x) = \sum_{i=1}^{n_h} y_i \varphi_i^h(x) \in V_h$ and
$y_{\varrho h}^k(x) = \sum_{i=1}^{n_h} y_i^k \varphi_i^h(x) \in V_h$ 
corresponding to the solution
$\mathbf{y}_h = (y_i)_{i=1,\ldots,n_h} \in \mathbb{R}^{n_h}$ of the Schur
complement system \eqref{LLSY:Eqn:SchurComplementSystemMassLumpedCompact} 
and the $k$-th PCG iterate
$\mathbf{y}_h^k = (y_i^k)_{i=1,\ldots,n_h} \in \mathbb{R}^{n_h}$, respectively,
as follows:
\begin{eqnarray}
\label{LLSY:Eqn:IterationErrorEstimate}
 \nonumber
  \|\hat y_{\varrho h} - y_{\varrho h}^k \|_{L_2(\Omega)} 
  &=& \| \mathbf{y}_h -  \mathbf{y}_h^k \|_{\mathbf{M}_h} 
      := (\mathbf{M}_h (\mathbf{y}_h -  \mathbf{y}_h^k), \mathbf{y}_h -  \mathbf{y}_h^k)^{1/2}    \\ \nonumber
  %&\le& \underline{c}_{\text{MS}}^{-1/2} (\mathbf{S}_h (\mathbf{y}_h -  \mathbf{y}_h^k),\mathbf{y}_h -  \mathbf{y}_h^k)^{1/2}\\ \nonumber
  &\le& (\mathbf{S}_h (\mathbf{y}_h -  \mathbf{y}_h^k),\mathbf{y}_h -  \mathbf{y}_h^k)^{1/2}\\ \nonumber
  &=& \| \mathbf{y}_h -  \mathbf{y}_h^k \|_{\mathbf{S}_h} \le \, 2 \, q^k \, \| \mathbf{y}_h -  \mathbf{y}_h^0 \|_{\mathbf{S}_h} \\
  &\le&  2\, \overline{c}_{\text{\tiny SM}}^{1/2}\, q^k \, \| \mathbf{y}_h -  \mathbf{y}_h^0 \|_{\mathbf{M}_h}
        = 2\, \overline{c}_{\text{\tiny SM}}^{1/2}\, q^k \, \| \hat y_{\varrho h} - y_{\varrho h}^0 \|_{L_2(\Omega)},
\end{eqnarray} 
where $q =(\sqrt{\text{cond}_2(\mathbf{D}_h^{-1}\mathbf{S}_h)}-1)/(\sqrt{\text{cond}_2(\mathbf{D}_h^{-1}\mathbf{S}_h)}+1) < 1$,
and $\text{cond}_2(\mathbf{D}_h^{-1}\mathbf{S}_h) = 
        \lambda_\text{max}(\mathbf{D}_h^{-1}\mathbf{S}_h)/\lambda_\text{min}(\mathbf{D}_h^{-1}\mathbf{S}_h)$
denotes the spectral condition number of $\mathbf{D}_h^{-1}\mathbf{S}_h$ 
that can be bounded by the constant 
%
%$\overline{c}_\text{\tiny SD}/\underline{c}_\text{\tiny SD} = (\overline{c}_\text{\tiny MD}^2 c_\text{\tiny inv}^4 + \overline{c}_\text{\tiny MD})/ \underline{c}_\text{\tiny MD}$
\begin{equation*}
\frac{\overline{c}_\text{\tiny SD}}{\underline{c}_\text{\tiny SD}} 
= \frac{\overline{c}_\text{\tiny MD}^2 c_\text{\tiny inv}^4 + \overline{c}_\text{\tiny MD}}{\underline{c}_\text{\tiny MD}}
= \frac{\lambda_\text{\tiny max}(D_T^{-1}M_T)^2 c_\text{\tiny inv}^4+ \lambda_\text{\tiny max}(D_T^{-1}M_T)}
            {\lambda_\text{\tiny min}(D_T^{-1}M_T)}
= (d+2)(c_\text{\tiny inv}^4 + 1)
\end{equation*}
that is independent of $h$.
Using the triangle inequality, 
the $L_2$-norm discretization error estimate 
\eqref{LLSY:Eqn:HsDiscretizationErrorEstimate} from Theorem~\ref{thm:hserror-lumped}, 
the $L_2$-norm iteration error estimate
\eqref{LLSY:Eqn:IterationErrorEstimate}, the inequality
\begin{equation}
 \| \hat y_{\varrho h}\|_{L_2(\Omega)} \le \| y_d\|_{L_2(\Omega)}
\end{equation}
that follow from \eqref{LLSY:Eqn:DVF-optimality-system-scaled-lumped1}-\eqref{LLSY:Eqn:DVF-optimality-system-scaled-lumped2}   
when we choose the test functions $q_h = \hat p_{\varrho h}$ and $v_h = \hat y_{\varrho h}$,
we finally arrive  at $L_2$-norm estimate between desired state $y_d$ and the $k$th PCG iterate
$y_{\varrho h}^k$ computed by the PCG method:
\begin{eqnarray}
\label{LLSY:eqn:L2IterationError}
\nonumber
  \| y_d - y_{\varrho h}^k \|_{L_2(\Omega)}
  &\le& \| y_d - \hat y_{\varrho h} \|_{L_2(\Omega)} + \|  \hat y_{\varrho h} - y_{\varrho h}^k \|_{L_2(\Omega)}\\\nonumber
  &\le& c \, h^s \, \|y_d\|_{H^s(\Omega)} +
        2\, \overline{c}_{\text{\tiny SM}}^{1/2}\, q^k \, \| \hat y_{\varrho h} - y_{\varrho h}^0 \|_{L_2(\Omega)}   \\\nonumber
  &\le& h^s \Big( c \,\|y_d\|_{H^s(\Omega)} 
        + 2\, \overline{c}_{\text{\tiny SM}}^{1/2} \, \| \hat y_{\varrho h} - y_{\varrho h}^0 \|_{L_2(\Omega)} \Big)\\
  &\le& h^s \Big( c \,\|y_d\|_{H^s(\Omega)} 
        + 2\, \overline{c}_{\text{\tiny SM}}^{1/2} \, \| y_d\|_{L_2(\Omega)} \Big) = h^s c(y_d)
\end{eqnarray}
provided that $q^k \le h^s$ and that the initial guess $y_{\varrho h}^0$ is chosen to be zero. 
Therefore, $k \ge  \ln h^{-s} / \ln q^{-1}$ ensures that the PCG computes an approximation
$y_{\varrho h}^k$ to the desired state $y_d$ that differs from $y_d$ 
in the same order $O(h^s)$ as the discretization error $\| y_d - \hat y_{\varrho h} \|_{L_2(\Omega)}$
in the $L_2$ norm. 
Moreover, this can be done with $O(n_h  \ln h^{-1}) = O(h^{-d} \ln h^{-1}) $ arithmetical operations,
i.e. the complexity is asymptotically optimal up to the logarithmical 
%logarithmic
factor $\ln h^{-1}$.

This logarithmical factor can be avoided in a nested iteration setting 
on a sequence of refined (nested) meshes. Indeed, let us consider a sequence of uniformly 
refined meshes $\mathcal{T}_\ell = \mathcal{T}_{h_\ell}$ with the mesh size $h_\ell$ 
and the optimally balanced regularization parameter $\varrho_\ell = h_\ell^4$, $\ell=1,\ldots,L$,
where $h_\ell = h_{\ell-1}/2$, $\ell=2,\ldots,L$. 
Thus, the coarsest mesh corresponds to the subindex $1$, whereas the finest mesh is related to $L$.
On every mesh $\mathcal{T}_\ell$,  $\ell = 1,\ldots,L$, we have to solve the mass-lumped Schur-complement system 
\eqref{LLSY:Eqn:SchurComplementSystemMassLumpedCompact}
that we now write in form:
find $\mathbf{y}_\ell = \mathbf{y}_{h_\ell} \in \mathbb{R}^{n_\ell}=\mathbb{R}^{n_{h_\ell}}$ such that
\begin{equation}
 \label{LLSY:Eqn:SchurComplementSystemMassLumpedCompact_l}
		%S_\ell \mathbf{y}_\ell  := (\varrho K_\ell^\top D_\ell^{-1} K_\ell + M_\ell) \mathbf{y}_\ell = \mathbf{f}_\ell, 
		S_\ell \mathbf{y}_\ell = \mathbf{y}_{d\ell}
\end{equation}
where 
$S_\ell = \varrho_\ell K_\ell D_\ell^{-1} K_\ell + M_\ell$, $K_\ell = K_{h_\ell}$. $D_\ell = D_{h_\ell}$,
$M_\ell = M_{h_\ell}$, $\mathbf{y}_{d\ell}=\mathbf{y}_{dh_\ell}$, and $\varrho_\ell = h_\ell^4$. 

%Nested Iteration: $(h_\ell,\varrho_\ell= h_\ell^4), \ell = 1,2,...,L$ producing a final iterate
%$y_L^{k_*}$ with $\| y_d - y_L^{k_*} \|_{L_2(\Omega)}$ in $O(h_L^{-d})$ arithmetical operations !!!
%
Now the {\it nested iteration algorithm} works as follows. 
First we solve the coarse-mesh problem \eqref{LLSY:Eqn:SchurComplementSystemMassLumpedCompact_l}, $\ell =1$,
sufficiently accurate. More precisely, we compute an iterate $\mathbf{y}_1^{k_1} \in \mathbb{R}^{n_1}$
corresponding to the fe function  $y_1^{k_1} = y_{\varrho_1 h_1}^{k_1} \in V_1 = V_{h_1}$ 
(short: $\mathbf{y}_1^{k_1}\leftrightarrow y_1^{k_1}$) such that
\begin{equation}
\|y_d - y_1^{k_1}\|_{L_2(\Omega)} \le h_1^s c(y_d).
\end{equation}
Due to \eqref{LLSY:eqn:L2IterationError}, this can be done with $k_1 \ge \ln h_1^{-s} / \ln q^{-1}$
PCG iterations starting with $y_1^{0} =0$.
%$\mathbf{y}_1^{0} = \mathbf{0}_1$.
Now, let us assume that, on level $\ell-1 \in \{1,\ldots,L-1\}$,
the iterate $y_{\ell-1}^{k_{\ell-1}} \in V_{\ell-1}$ fulfills the estimate
\begin{equation}
\label{LLSY:Eqn:L2NestedIterationErrorIV}
\|y_d - y_{\ell-1}^{k_{\ell-1}}\|_{L_2(\Omega)} \le h_{\ell-1}^s c(y_d).
\end{equation}
Let 
$y_\ell^0 = I_{\ell-1}^\ell y_{\ell-1}^{k_{\ell-1}}$ 
be the affine-linear interpolate of $y_{\ell-1}^{k_{\ell-1}}$.
We note that $y_\ell^0 = I_{\ell-1}^\ell y_{\ell-1}^{k_{\ell-1}} = y_{\ell-1}^{k_{\ell-1}} \in V_{\ell-1} \subset V_\ell$
since the meshes are nested.
Then we get the estimate 
\begin{eqnarray}
\label{LLSY:Eqn:L2NestedIterationErrorl_1}
\nonumber
 \| y_d - y_\ell^{k_\ell} \|_{L_2(\Omega)}
  &\le& \| y_d - \hat y_\ell \|_{L_2(\Omega)} + \|  \hat y_\ell -  y_\ell^{k_\ell}\|_{L_2(\Omega)}\\ %\nonumber
  &\le& c h_\ell^s \|y_d\|_{H^s(\Omega)} +    
            2\, \overline{c}_{\text{\tiny SM}}^{1/2}\, q^{k_\ell} \, \| \hat y_\ell - y_\ell^0 \|_{L_2(\Omega)}, %\\
 % &\le& ...         
\end{eqnarray}
where $\hat y_\ell = \hat y_{\varrho_\ell h_\ell} \in V_\ell$ is the exact state solution of the 
finite element scheme 
\eqref{LLSY:Eqn:DVF-optimality-system-scaled-lumped1}-\eqref{LLSY:Eqn:DVF-optimality-system-scaled-lumped2} 
corresponding to the solution $\mathbf{y}_\ell \in \mathbb{R}^{n_\ell}$ of the mass-lumped Schur-complement system 
\eqref{LLSY:Eqn:SchurComplementSystemMassLumpedCompact_l}.
Now using $y_\ell^0 = I_{\ell-1}^\ell y_{\ell-1}^{k_{\ell-1}}$, the triangle inequality, Theorem~\ref{thm:hserror-lumped},
and estimate \eqref{LLSY:Eqn:L2NestedIterationErrorIV},  we can continue to estimate the last 
term in \eqref{LLSY:Eqn:L2NestedIterationErrorl_1} as follows:
\begin{eqnarray}
\label{LLSY:Eqn:L2NestedIterationErrorl_2}
\nonumber
 \|  \hat y_\ell - y_\ell^0\|_{L_2(\Omega)}
  &\le& \| \hat y_\ell - y_d\|_{L_2(\Omega)} + \|y_d - I_{\ell-1}^\ell y_{\ell-1}^{k_{l-1}}\|_{L_2(\Omega)}\\ \nonumber
  &\le& \| \hat y_\ell - y_d\|_{L_2(\Omega)} + \|y_d -  y_{\ell-1}^{k_{l-1}}\|_{L_2(\Omega)}\\ \nonumber
  &\le& c h_\ell^s \|y_d\|_{H^s(\Omega)} +  h_{\ell-1}^s c(y_d) \\%\nonumber
  &\le& h_\ell^s (c \|y_d\|_{H^s(\Omega)} +  2^s c(y_d))       
\end{eqnarray}
Inserting \eqref{LLSY:Eqn:L2NestedIterationErrorl_2} into \eqref{LLSY:Eqn:L2NestedIterationErrorl_1},
we get 
\begin{eqnarray}
\label{LLSY:Eqn:L2NestedIterationErrorl}
\nonumber
 \| y_d - y_\ell^{k_\ell} \|_{L_2(\Omega)}
     &\le& h_\ell^s \left[ c \|y_d\|_{H^s(\Omega)} +  
            2\, \overline{c}_{\text{\tiny SM}}^{1/2}\, q^{k_\ell} (c \|y_d\|_{H^s(\Omega)} +  2^s c(y_d))\right] \\
     &\le&  h_{\ell}^s c(y_d)      
\end{eqnarray}
provided that $q^{k_\ell}(c \|y_d\|_{H^s(\Omega)} +  2^s c(y_d)) \le \|y_d\|_{L_2(\Omega)}$.
The latter inequality is ensured if we performed not more than
\begin{equation}
\label{LLSY:Eqn:NestedIterationNumber_k*}
 k_\ell = k_* \ge \ln (q(y_d)^{-1}) / \ln q^{-1}
\end{equation}
nested iterations, where 
$
q(y_d) = \|y_d\|_{L_2(\Omega)}/((1+2^s)c \|y_d\|_{H^s(\Omega)} + 2^{1+s}\|y_d\|_{L_2(\Omega)}) < 1.
$
Here we exclude the trivial case that $y_d=0$.
Therefore, we have proved the following nested iteration theorem by induction.
\begin{theorem}
\label{LLSY:Thm:NestedIteration}
If the coarse mesh problem on level $l=1$ is solved by $k_1$ PCG iterations with the initial guess 
$\mathbf{y}_1^0 = \mathbf{0}_1$
such that \eqref{LLSY:Eqn:L2NestedIterationErrorl_1} holds, and if $k_*$ nested PCG iterations 
are used on all nested levels $\ell = 2,\ldots,L$, i.e. $k_2=\ldots=k_L= k_*$ defined by \eqref{LLSY:Eqn:NestedIterationNumber_k*},
then the last iterate $y_L^{k_L} \leftrightarrow \mathbf{y}_L^{k_L}$ on the finest level $\ell = L$
differs from given desired state $y_d$ in the order of the discretization error $O(h_L^s)$ 
with respect to the $L_2(\Omega)$ norm. More precisely, we get the estimate 
\begin{equation}
\label{LLSY:Eqn:L2NestedIterationErrorL}
\|y_d - y_L^{k_L}\|_{L_2(\Omega)} \le h_L^s c(y_d).
\end{equation}
The computation of $y_L^{k_L} \leftrightarrow \mathbf{y}_L^{k_L}$ requires not more 
than $O(n_L) = O(h_L^{-d})$ arithmetical operations and memory,
i.e. the nested iteration procedure proposed is asymptotically optimal.
\end{theorem}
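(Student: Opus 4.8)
The plan is to prove the error estimate \eqref{LLSY:Eqn:L2NestedIterationErrorL} by induction over the levels $\ell = 1,\ldots,L$, and then to account for the arithmetical cost by a geometric summation. The induction is essentially the one already assembled in \eqref{LLSY:eqn:L2IterationError}--\eqref{LLSY:Eqn:L2NestedIterationErrorl}; the proof proper only has to organize these estimates and append the complexity count.

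First I would establish the base case $\ell = 1$. Starting from $\mathbf{y}_1^0 = \mathbf{0}_1$ and performing $k_1 \geq \ln h_1^{-s}/\ln q^{-1}$ PCG iterations, the combined discretization-plus-iteration bound \eqref{LLSY:eqn:L2IterationError} yields $\|y_d - y_1^{k_1}\|_{L_2(\Omega)} \leq h_1^s\, c(y_d)$, which is exactly \eqref{LLSY:Eqn:L2NestedIterationErrorIV} for $\ell = 1$. For the inductive step I would assume that \eqref{LLSY:Eqn:L2NestedIterationErrorIV} holds on level $\ell-1$ and take the initial guess on level $\ell$ as the prolongation $y_\ell^0 = I_{\ell-1}^\ell y_{\ell-1}^{k_{\ell-1}}$. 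The key observation is that, by the nestedness $V_{\ell-1}\subset V_\ell$, this prolongation reproduces the coarse function exactly, so $y_\ell^0 = y_{\ell-1}^{k_{\ell-1}}$ and no extra interpolation error is introduced. Inserting the bound \eqref{LLSY:Eqn:L2NestedIterationErrorl_2} on the initial iteration error $\|\hat y_\ell - y_\ell^0\|_{L_2(\Omega)}$ (which couples the discretization estimate of Theorem~\ref{thm:hserror-lumped} with the induction hypothesis) into the iteration estimate \eqref{LLSY:Eqn:L2NestedIterationErrorl_1} produces \eqref{LLSY:Eqn:L2NestedIterationErrorl}. The prescription \eqref{LLSY:Eqn:NestedIterationNumber_k*} for $k_*$ guarantees $q^{k_*}\,(c\|y_d\|_{H^s(\Omega)} + 2^s c(y_d)) \leq \|y_d\|_{L_2(\Omega)}$, which is precisely what is needed to absorb the iteration contribution back into the same constant $c(y_d)$, thereby closing the induction and giving \eqref{LLSY:Eqn:L2NestedIterationErrorL} at $\ell = L$.

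Finally I would turn to the complexity. The crucial input here is Theorem~\ref{LLSY:Thm:SpectralEquivalenceDSD}: the spectral condition number $\text{cond}_2(D_\ell^{-1}S_\ell)$ is bounded by $(d+2)(c_\text{\tiny inv}^4+1)$ independently of $\ell$, so the contraction factor $q$, and hence the per-level iteration count $k_*$ of \eqref{LLSY:Eqn:NestedIterationNumber_k*}, are fixed constants that do not grow with $\ell$ or $L$. Since $D_\ell$ is diagonal and, thanks to mass lumping, a matrix-vector product $S_\ell * \mathbf{v}$ costs only $O(n_\ell)$ operations, each level requires $O(k_* n_\ell) = O(n_\ell)$ work and memory. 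Summing over the levels and using that uniform refinement gives $n_\ell \simeq 2^d n_{\ell-1}$, the total effort is $\sum_{\ell=1}^{L} O(n_\ell) = O(n_L)$ by the geometric series, which is the claimed asymptotically optimal complexity $O(n_L) = O(h_L^{-d})$.

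The main obstacle is not any individual estimate, since those are already collected in \eqref{LLSY:eqn:L2IterationError}--\eqref{LLSY:Eqn:L2NestedIterationErrorl}; rather, it is to verify that $k_*$ can genuinely be chosen uniformly in $\ell$ and $L$ (which rests entirely on the $h$- and $\varrho$-robust condition number bound of Theorem~\ref{LLSY:Thm:SpectralEquivalenceDSD}) and that the constant $c(y_d)$ reproduces itself from one level to the next, so that the induction is truly self-sustaining and the geometric summation for the total cost does not pick up an unbounded factor in $L$.
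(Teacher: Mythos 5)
Your proposal is correct and follows essentially the same route as the paper: the authors' proof is a one-line pointer back to the inductive chain \eqref{LLSY:eqn:L2IterationError}--\eqref{LLSY:Eqn:L2NestedIterationErrorl} together with a geometric-series count, which is exactly what you reconstruct and make explicit, including the level-independence of $q$ and $k_*$ via Theorem~\ref{LLSY:Thm:SpectralEquivalenceDSD}. No gaps.
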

\begin{proof}
The proof of estimate \eqref{LLSY:Eqn:L2NestedIterationErrorL} follows from above by induction.
The complexity analysis is based on simple use of the geometric series.
\end{proof}

%\uli{
%\begin{enumerate}
% \item Nested Iteration: $(h_\ell,\varrho_\ell= h_\ell^4), \ell = 1,2,...,L$ producing a final iterate
%       $y_L^{k_*}$ with $\| y_d - y_L^{k_*} \|_{L_2(\Omega)}$ in $O(h_L^{-d})$ arithmetical operations !!!
 %\item Cascadic Nested Iteration: $(h_\ell,\varrho_\ell= h_\ell^4), \ell = 1,2,...,L$,
%       then fix $\varrho_L= h_L^4$ (cost), and refine the mesh only $h_\ell, \ell = L+1,...,L+J$ 
%       if neccessary in order to compute a more accurate control !
%\end{enumerate}
%
%where $h_{\ell+1} = h_\ell/2$ in a uniform refinement setting or
%$n_{\ell+1} >> n_\ell$ in an adaptive setting.
%Item 2. can also be moved to the Conclusion ! 
%}

%\red{
%\noindent 
%{\bf Cascadic Nested Iteration:} $(h_\ell,\varrho_\ell= h_\ell^4), \ell = 1,2,...,L$,
%                              then fix $\varrho_L= h_L^4$ (cost), and refine the mesh only
%                              $h_\ell, \ell = L+1,...,L+J$ if neccessary in order 
%                              to compute a more accurate control,
%where $h_{\ell+1} = h_\ell/2$ in a uniform refinement setting or
% $n_{\ell+1} >> n_\ell$ in an adaptive setting.
%This should be moved to the Conclusion if we have no numerical results !\\[2ex] 
%}

%\uli{ 
We stop the nested iteration process as soon as we arrive at some desired relative 
accuracy $\varepsilon \in (0,1)$ such that
\begin{equation}
\label{LLSY:Eqn:DesiredL2Error}
 \|y_d - y_L^{k_L}\|_{L_2(\Omega)} \le \varepsilon \, \|y_d\|_{L_2(\Omega)}.
\end{equation}
The apriori estimate \eqref{LLSY:Eqn:L2NestedIterationErrorL} immediately yields that 
estimate \eqref{LLSY:Eqn:DesiredL2Error} is guaranteed when 
$ch_L^s \|y_d\|_{H^s(\Omega)} \le \varepsilon \, \|y_d\|_{L_2(\Omega)}$,
but in practice we directly check \eqref{LLSY:Eqn:DesiredL2Error} 
because all quantities are computable.

We will also stop the nested iteration if the cost for the control $u_\ell^{k_\ell}$ 
becomes too large, where $u_\ell^{k_\ell}\leftrightarrow \mathbf{u}_\ell^{k_\ell}$ 
is computed from the fe state equation
\begin{equation*}
\mathbf{u}_\ell^{k_\ell} = - \varrho_\ell^{-1}\mathbf{p}_\ell^{k_\ell} = D_\ell^{-1} K_\ell \mathbf{y}_\ell^{k_\ell}
\end{equation*}
More precisely, let $c_u > 0$ be a given threshold for the control cost that we are willing to pay. 
Then we stop the nested iteration if
\begin{equation*}
\|u_\ell^{k_\ell}\|_{L_2(\Omega)} = (M_\ell \mathbf{u}_\ell^{k_\ell},\mathbf{u}_\ell^{k_\ell})
\le \overline{c}_\text{\tiny MD} (D_\ell \mathbf{u}_\ell^{k_\ell},\mathbf{u}_\ell^{k_\ell})
=  \overline{c}_\text{\tiny MD} (K_\ell \mathbf{y}_\ell^{k_\ell},\mathbf{y}_\ell^{k_\ell})
%=  \overline{c}_\text{\tiny MD} \|\nabla y_\ell^{k_\ell}\|^2_{L_2(\Omega)}
\le c_u,
\end{equation*}
but $\|u_{\ell+1}^{k_{\ell+1}}\|_{L_2(\Omega)} > c_u$, where  $\overline{c}_\text{\tiny MD}=1$. 
Then we set $L=\ell$.

Now we may proceed with {\it cascadic nested iteration}
freezing the cost (regularization) parameter $\varrho_L = h_L^4$ 
and refining the mesh  only , i.e.
\begin{equation}
 \varrho_{\ell+1} = \varrho_L = h_L^4 = \text{const.} \; \; \mbox{and} \; \; h_{\ell+1}= h_\ell / 2 
                        \;\mbox{for}\; \ell = L,\ldots,L+J-1,
\end{equation}
in order to improve the approximation of the control. 
We note that this further mesh refinement will not improve the approximation 
to the desired state $y_d$ 
since this error 
%$\|y_d - y_{L+J}^{k_{L+J}} \|_{L_2(\Omega)} = O(h_L^s)$
is defined by the frozen cost parameter $\varrho_L$. %\\
%
%$\|u_{\varrho_L} - u_{\varrho_L h_{L+j}} \|_{L_2(\Omega)} \le ??$\\
%  $\|u_{\varrho_L} - u_{\varrho_L h_{L+j}}^{k_{l+j}} \|_{L_2(\Omega)} \le ??$
%
Since we only use a few additional levels for the improvement of the 
control, we can proceed with the PCG 
%with the diagonal preconditioner 
preconditioned by 
$D_\ell$ 
as before
as nested iteration, but replacing $\overline{c}_\text{\tiny SD}$ by 
%\red{
$\overline{c}_{\text{\tiny SD},\ell} = \overline{c}_\text{\tiny SD} 2^{4(\ell - L)}$
%}
for $\ell = L+1,\ldots,L+J$. 
If we want to add many  levels, i.e. $J>>1$, then we 
%%CCG has problems with $L_2$ convergence !!!
%can use the cascadic CG iteration starting on level $L+1$  with the initial guess 
%$y_{L+1}^0 = I_L^{L+1} y_L^{k_L} = y_L^{k_L} \in V_L \subset V_{L+1}$ 
%\cite{LLSY:Deuflhard:1993SC93-23,LLSY:Shaidurov:1996ComputersMathApplic,LLSY:ShaidurovTobiska:1999MathCom},
%or the 
may use some 
cascadic full multigrid Schur complement iteration using level $L$ as coarse mesh
and $y_{L+1}^0 = I_L^{L+1} y_L^{k_L} = y_L^{k_L} \in V_L \subset V_{L+1}$ as initial guess;
see \cite{LLSY:Hackbusch:1985Book,LLST:Braess:2007a} 
for $L_2$ convergent multigrid methods. 
%\red{\bf [REF] $L_2$ convergent mg}.
% Braess, D., Peisker, P.: On the numerical solution of the biharmonic equation and the role of
%squaring matrices for preconditioning. IMA J. Numer. Anal. 6, 393-404 (1986).

%To solve the Schur complement systems on levels $\ell = L+1,\ldots,L+J$,
%we can use the cascadic PCG iteration starting on level $L+1$ 
%with the initial guess $y_{L+1}^0 = I_L^{L+1} y_L^{k_L} = y_L^{k_L} \in V_L \subset V_{L+1}$ 
%\red{[Ref: Deuflhard; Shaidurov]},
%or the cascadic full multigrid iteration using level $L$ as coarse mesh.

%Adaptivity and variable regularization will be shortly discussed in Section 4 !!!
%}

%\newpage
%%%%%%%%%%%%%%%%%%%%%%%%%%%%%%%%%%%%%%%%%%%%%%%%%%%%%%%%%%%%%%%%%%%%%%%%%%%%%%%%%%%%%
%
\section{Numerical Results}
\label{LLSY:sec:NumericalResults}	
In our numerical experiments, we consider the discontinuous
desired state 
\begin{equation*}
  y_d=
  \begin{cases}
    1 & \textup{ in }\; (0.25, 0.75)^3,\\
    0 & \textup{ in }\; \overline{\Omega}\setminus(0.25, 0.75)^3,
  \end{cases}
\end{equation*}
in the computational domain $\Omega=(0, 1)^3 \subset \mathbb{R}^{d=3}$.
This discontinuous desired state $y_d$ does not belong to $Y=H^1_0(\Omega)$,
and has a rather low Sobolev regularity. 
More precisely, $y_d \in H^{1/2 - \varepsilon}(\Omega)$ for any $\varepsilon > 0$.
This discontinuous target has been utilized
in the work \cite{LLSY:LangerLoescherSteinbachYang:2023, LLSY:LangerSteinbachYang:2022b,
  LLSY:NeumuellerSteinbach:2021a} in both the cases of $L_2$ and energy ($H^{-1}$)
regularization for distributed elliptic optimal control problems.
So, we can easily compare the numerical results presented below for 
the mass-lumping discretization of the control term 
%\huidong{(adjoint?)} 
in the reduced optimality system 
with those of the $L_2$ regularization without mass lumping and the $H^{-1}$ regularization.

We decompose the domain $\Omega=(0,1)^3$ into uniformly refined tetrahedral
elements $\tau_e$,
and start with an initial mesh that contains $384$ tetrahedral
elements and $125$ vertices, leading to the mesh size $h=2^{-2}$. From such a
mesh, we make successive refinements on the levels 
$\ell = 1,...,8$.
%$L_i$, $i=1,...,8$. 
On the finest level $\ell=L=8$, we have $135,005,697$ vertices, $h=2^{-9}=1.9531$e$-3$, and
$\varrho=h^4=2^{-36}=1.4552$e$-11$. Further, we run tests on the adaptively
refined meshes, in which we have employed the standard red-green refinement of
tetrahedral elements, and we have chosen the locally varying regularization parameter
$\varrho_\tau=h_e^4$ on each tetrahedral element $\tau_e$. 
%\uli{
The adaptive procedure is simply based on the localization of the  
error $\|y_d - {\tilde y}_\ell\|_{L_2(\Omega)}$ that is explicitly computable 
for any known fe approximation ${\tilde y}_\ell$ to the given desired state $y_d$;
see \cite{LLSY:LangerLoescherSteinbachYang:2022b} for a detailed description.
%}
%\red{\bf Here we need a short description of the adaptive procedure that is 
%based on the localization of the computable error $\|y_d - {\hat y}_\ell\|_{L_2(\Omega)}$ 
%or, more precisely, $\|y_d - y_\ell^{k_\ell}\|_{L_2(\Omega)}$ !!!}

%Using the technique of mass lumping,  the mass matrix $M_{\varrho h}$ in the
%Schur complement is replaced by the lumped mass of $M_{\varrho h}$, and the
%preconditioner is the lumped mass of $M_h$.Therefore, the inverse operation
%becomes trivial in contrast to the inverse of the mass matrix.
%

%\huidong{As described in Section \ref{LLSY:sec:NestedPCGIteration}, the inverse
%operation becomes trivial in the spd mass-lumped Schur-complement system}
%(\ref{LLSY:Eqn:SchurComplementSystemMassLumpedCompact}), we first solve
%it with the PCG until the preconditioned residual is
%reduced by a factor $10^6$.
%}
As described in Section \ref{LLSY:sec:NestedPCGIteration}, thanks to the replacement 
of the mass matrix  $M_h$ by its diagonal approximation $D_h = \text{lump}(M_h)$, 
we can efficiently solve the spd mass-lumped Schur-complement system 
\eqref{LLSY:Eqn:SchurComplementSystemMassLumpedCompact} 
by means of the PCG preconditioned by $D_h$. 
We first use the initial guess $\mathbf{y}_\ell^0= \mathbf{0}$, and terminate the iteration 
as soon as the preconditioned residual is reduced by a factor $10^6$.
The  number of PCG iterations (Its)and the computational time (Time) in seconds (s)
%on both uniform and adaptive refinements 
are provided in Table \ref{tab:solver} for both uniform and adaptive refinements. 

Therein, we observe the robustness of  our proposed preconditioner for
%the 
%modified 
%mass-lumped
%Schur complement equation
(\ref{LLSY:Eqn:SchurComplementSystemMassLumpedCompact})
%wrt
with respect to both the mesh size and local adaptivity under the choice of
$\varrho_\tau=h_e^4$. We only see slightly more iterations for the adaptive
refinements in comparison to the uniform refinements. 
\begin{table}
%
%{\footnotesize
{\small
    \begin{tabular}{|l|rlr|rlr|}
      \hline
      \multirow{2}{*}{$\ell$}&\multicolumn{3}{c|}{Adaptive} &
      \multicolumn{3}{c|}{Uniform} \\  \cline{2-7}
      &\#Dofs& error &Its (Time)& \#Dofs &error&Its (Time) \\
      \hline
      $1$&$125$ &$3.26$e$-1$&$10$ ($6.3$e$-4$)&$125$&$3.26$e$-1$ &$10$ ($6.4$e$-4$)\\
      $2$&$223$&$2.35$e$-1$&$62$ ($6.5$e$-3$)&$729$&$2.25$e$-1$&$55$ ($1.8$e$-2$)\\
      $3$&$1,044$&$1.86$e$-1$&$106$ ($5.5$e$-2$)&$4,913$&$1.59$e$-1$&$79$ ($1.9$e$-1$)\\
      $4$&$4,548$&$1.32$e$-1$&$123$ ($2.8$e$-1$)&$35,937$&$1.12$e$-1$&$85$ ($1.7$e$-0$)\\
      $5$&$10,524$&$1.05$e$-1$&$116$ ($6.3$e$-1$)&$274,625$&$7.96$e$-2$&$81$ ($2.3$e$+1$)\\
      $6$&$25,807$&$8.35$e$-2$&$113$ ($1.6$e$-0$)&$2,146,689$&$5.62$e$-2$&$74$ ($1.8$e$+2$)\\
      $7$&$91,520$&$6.03$e$-2$&$100$ ($5.7$e$-0$)&$16,974,593$&$3.97$e$-2$&$68$ ($1.4$e$+3$)\\
      $8$&$118,334$&$5.62$e$-2$&$102$ ($7.7$e$-0$)&$135,005,697$&$2.81$e$-2$&$66$ ($1.3$e$+4$)\\
      $9$&$432,195$&$4.08$e$-2$&$93$ ($3.5$e$+1$)&&& \\
      ${10}$&$473,638$&$3.97$e$-2$&$95$ ($6.3$e$+1$)&&&\\
      ${11}$&$1,843,740$&$2.84$e$-2$&$91$ ($2.5$e$+2$)&&&\\
      ${12}$&$1,937,983$&$2.79$e$-2$&$92$ ($3.2$e$+2$)&&&\\
      ${13}$&$7,681,306$&$1.99$e$-2$&$91$ ($6.3$e$+2$)&&&\\
      ${14}$&$7,922,574$&$1.96$e$-2$&$93$ ($1.0$e$+3$)&&&\\
      ${15}$&$31,496,575$&$1.39$e$-2$&$83$ ($3.6$e$+3$)&&&\\
      ${16}$&$32,000,845$&$1.38$e$-2$&$84$ ($5.4$e$+3$)&&&\\
      ${17}$&$127,607,911$&$9.84$e$-3$&$68$ ($1.6$e$+4$)&&&\\
      \hline
    \end{tabular}
\caption{Comparison of the PCG iterations (Its) and computational time (Time) in
  seconds (indicated in the parentheses) for
  %the 
  %modified 
  %mass-lumped
  %Schur complement
  %equation
  solving
  (\ref{LLSY:Eqn:SchurComplementSystemMassLumpedCompact}) 
  %on 
  for
  both adaptive and
  uniform refinements using the non-nested iterations,
  %where the preconditioned residual is reduced by a factor $10^6$,
  %\red{error = $\|\hat{y}_{\varrho h}-{y}_d\|_{L_2(\Omega)}$}
  %\uli{
  where error = $\|y_d - y_\ell^{k_\ell}\|_{L_2(\Omega)}$.
  %} 
  } 
\label{tab:solver}
}
\end{table}
%
%---------------------------------------------------------------------------------------------------

As shown in the theoretical part, solving the 
%modified 
%mass-lumped
Schur complement equation with the lumped mass does not
deteriorate the convergence of our finite element approximation. This is confirmed
in our numerical experiments. The comparison of convergence on both uniform and adaptive refinements
is given in Figure \ref{fig:errorcomp}.
%It is easy to see that we obtain
We observe the convergence rate
$h^{0.5}$ for the uniform refinement as predicted by Theorem\ref{thm:hserror-lumped}, and a much better convergence rate
$h^{0.75}$ for the adaptive refinements;
%\uli{
  see \cite{LLSY:LangerLoescherSteinbachYang:2022b} for the case of variable energy regularization.
There one can also find an explanation of the convergence rate that can be achieved via 
this adaptive procedure.
%}

%
\begin{figure}
  \centering
    \includegraphics[width=1.0\textwidth]{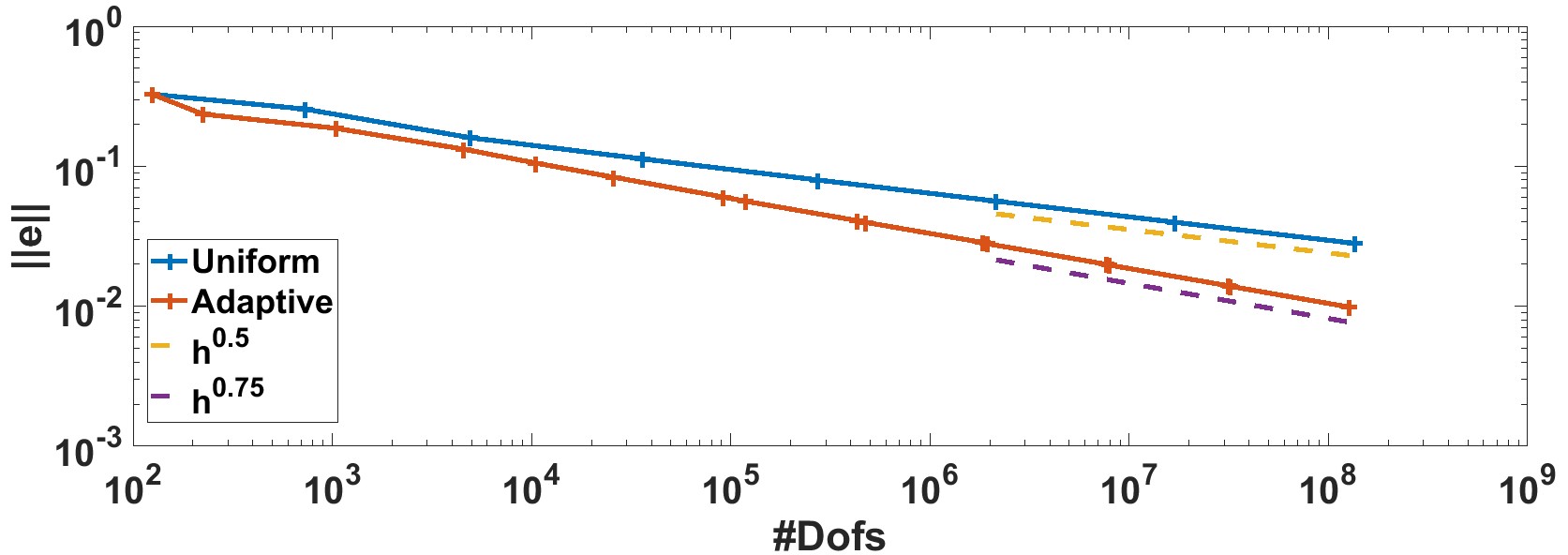}
    \caption{Comparison of the convergence history obtained from non-nested iterations for uniform and adaptive refinements,
      %\red{$\|e\|=\|\hat{y}_{\varrho h}-y_d\|_{L_2(\Omega)}$}
      where %\uli{
        $\|e\| = \|y_d - y_\ell^{k_\ell}\|_{L_2(\Omega)}$.
      %}. 
      }
    \label{fig:errorcomp}
\end{figure}

In order to further reduce the computational cost, we utilize 
%the nested iterations in the PCG solver. 
nested PCG iterations as described in Section~\ref{LLSY:sec:NestedPCGIteration}.
Here, on the coarsest level 
$\ell = 1$,
%$L_1$, 
we run the
PCG iterations until the relative preconditioned residual reaches $10^{-6}$. 
On the refined levels 
$\ell = 2,3,...$,
%$L_i$, $i=2,...$, 
we have utilized an adaptive tolerance
\begin{equation}\label{eq:adapttol}
  \alpha\left[n_\ell/n_{\ell-1}\right]^{-\frac{\beta}{3}},\; \ell =2,3,\dots,
  %\alpha\left[N_{L_i}/N_{L_{i-1}}\right]^{-\frac{\beta}{3}},\; i>1
\end{equation}
for the relative preconditioned residual, with $\alpha$ being a scaling factor, $\beta=0.5$ and
$0.75$ for the uniform and adaptive refinement, respectively,  
and
$n_\ell$
%$N_{L_i}$, 
the number of degrees of freedom (\#Dofs) 
on the mesh level 
$\ell = 1,2,\ldots$ .
%$L_i$, $i=1,...$ . 
The solution on the 
%coarse level $L_{i-1}$
level $\ell-1$
is used as an initial guess for the PCG iteration on the next finer level $\ell$.
%for the solution on the fine level $L_i$  in the PCG solver. 
The reduced number of nested iterations (Its) on both uniform and
adaptive refinements is given in Table \ref{tab:solver_nested}, where we
have chosen $\alpha=0.5$ and $\alpha=1$ for the adaptive and uniform
refinement, respectively. From this, we easily see much fewer iteration
numbers and significantly less computational time in seconds in comparison with the case of
non-nested iterations as shown in Table \ref{tab:solver}, without loss of
accuracy of the numerical approximations; see Figure~\ref{fig:errorcomp_nested} 
for a comparison of convergence history  
%on 
for
both uniform and
adaptive refinements using the nested iterations.  
\begin{table}[h]
{\small
    \begin{tabular}{|l|rlr|rlr|}
      \hline
      \multirow{2}{*}{$\ell$}&\multicolumn{3}{c|}{Adaptive} &
      \multicolumn{3}{c|}{Uniform} \\  \cline{2-7}
      &\#Dofs& error &Its (Time)& \#Dofs & error  &Its (Time)\\
      \hline
      $1$&$125$ &$3.26$e$-1$&$10$ ($6.3$e$-4$)&$125$&$3.26$e$-1$ &$10$ ($6.3$e$-4$)\\
      $2$&$223$&$3.30$e$-1$&$1$ ($2.6$e$-4$)&$729$&$2.27$e$-1$&$8$ ($2.9$e$-3$)\\
      $3$&$1,067$&$1.84$e$-1$&$19$ ($1.1$e$-2$)&$4,913$&$2.25$e$-1$&$1$ ($4.6$e$-3$)\\
      $4$&$4,705$&$1,28$e$-1$&$13$ ($3.3$e$-2$)&$35,937$&$1.08$e$-1$&$9$ ($1.9$e$-1$)\\
      $5$&$15,368$&$1.00$e$-1$&$17$ ($1.4$e$-1$)&$274,625$&$8.22$e$-2$&$8$ ($1.5$e$-0$)\\
      $6$&$30,996$&$8.45$e$-2$&$17$ ($4.0$e$-1$)&$2,146,689$&$5.60$e$-2$&$9$ ($1.4$e$+1$)\\
      $7$&$94,176$&$6.30$e$-2$&$19$ ($1.3$e$-0$)&$16,974,593$&$3.98$e$-2$&$9$ ($2.1$e$+2$)\\
      $8$&$129,760$&$5.68$e$-2$&$18$ ($1.7$e$-0$)&$135,005,697$&$2.81$e$-2$&$9$ ($2.2$e$+3$)\\
      $9$&$440,572$&$4.18$e$-2$&$17$ ($1.2$e$+1$)&&& \\
      ${10}$&$488,124$&$4.03$e$-2$&$17$ ($1.3$e$+1$)&&& \\
      ${11}$&$1,860,339$&$2.90$e$-2$&$18$ ($6.1$e$+1$)&&&\\
      ${12}$&$1,958,388$&$2.85$e$-2$&$16$ ($5.9$e$+1$)&&&\\
      ${13}$&$7,254,384$&$2.06$e$-2$&$18$ ($2.6$e$+2$)&&&\\
      ${14}$&$7,408,106$&$2.04$e$-2$&$16$ ($2.1$e$+2$)&&&\\
      ${15}$&$29,094,073$&$1.47$e$-2$&$17$ ($6.9$e$+2$)&&&\\
      ${16}$&$29,682,531$&$1.44$e$-2$&$16$ ($7.6$e$+2$)&&&\\
      ${17}$&$116,229,104$&$1.04$e$-2$&$16$ ($3.7$e$+3$)&&&\\
      \hline
    \end{tabular}
}
\caption{Comparison of the PCG iterations (Its) and computational time (Time) in
  seconds (indicated in the parentheses) 
  %for the
 % modified Schur complement equation (\ref{LLSY:Eqn:SchurComplementSystemLumped}) 
  %mass-lumped Schur complement equation
  for solving 
  (\ref{LLSY:Eqn:SchurComplementSystemMassLumpedCompact})
  %on 
  for
  both adaptive ($\alpha=0.5$,
  $\beta=0.75$) and uniform ($\alpha=1$, $\beta=0.5$) refinements using
  %(\ref{eq:adapttol}) for the relative preconditioned residual on the refined
  %levels,
  the nested iteration approach,
  %\red{error = $\|\hat{y}_{\varrho h}-{y}_d\|_{L_2(\Omega)}$}
  %\uli{
  where error = $\|y_d - y_\ell^{k_\ell}\|_{L_2(\Omega)}$
%}. 
  } 
  \label{tab:solver_nested}
\end{table}
%
%---------------------------------------------------------------------------------------------------
%
\begin{figure}
  \centering
    \includegraphics[width=1.0\textwidth]{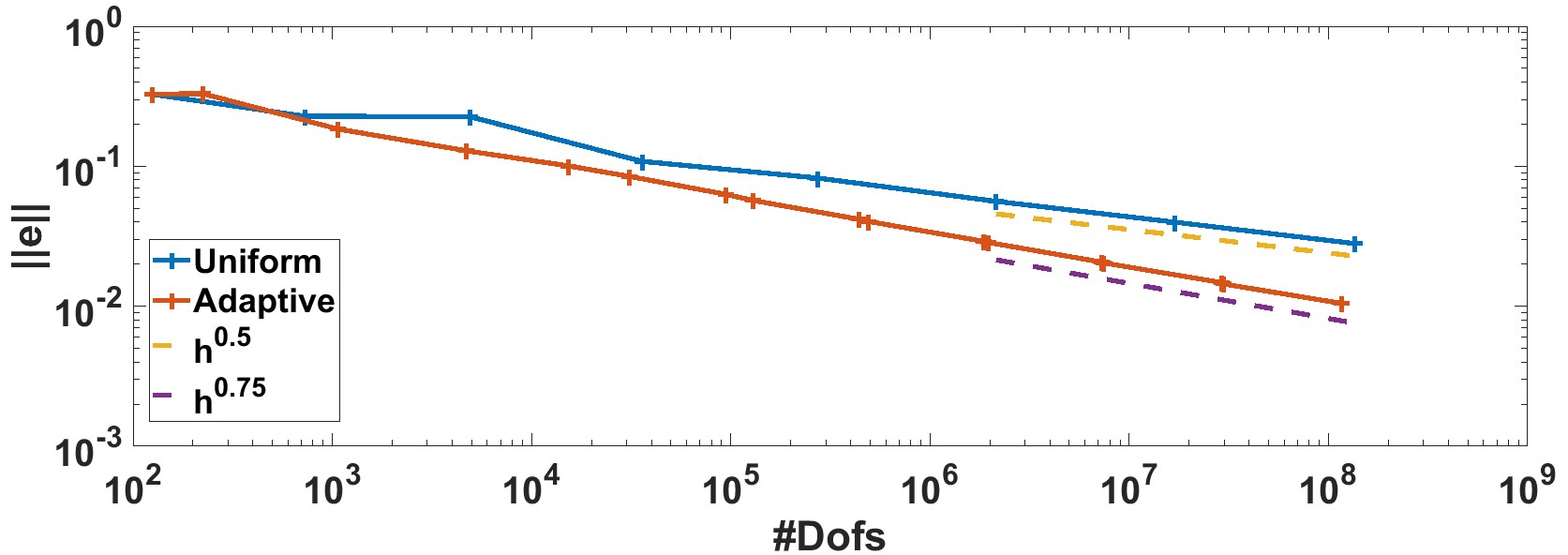}
    \caption{
     %Comparison of convergence (obtained from nested iterations) on uniform and adaptive refinements:
     Comparison of the convergence history obtained from nested iterations for uniform and adaptive refinements, where
      %\red{$\|e\|=\|\hat{y}_{\varrho h}-y_d\|_{L_2(\Omega)}$}
     %\uli{
     $\|e\| = \|y_d - y_\ell^{k_\ell}\|_{L_2(\Omega)}$.
    %}. 
      }
    \label{fig:errorcomp_nested}
\end{figure}

%\uli{
  Another approach to reduce the computational time, especially, in the case of uniform refinement 
is the parallelization of the PCG solver. The parallelization of the conjugate gradient algorithm is now a standard 
procedure \cite{LLST:DouglasHaaseLanger:2003Book}. The crucial point is always the preconditioner.
It is clear that the parallelization of a diagonal preconditioner is much easier 
than the parallelization of a multigrid preconditioner. More precisely, 
the parallelization of a diagonal preconditioner such as $D_h$ is trivial.
%}
%
%\huidong{
For parallel performance studies, we have utilized the open source MFEM\footnote{https://mfem.org/}.
%}
We observe from the diagonals of Table~\ref{tab:parallel_solver_nonnested},
e.g. from level $7$ with $16$ cores to level $8$ with $512$ cores (always factor $8$), 
almost constant time, i.e. a good weak scaling behavior, 
whereas the horizontal lines show an almost perfect strong scaling.
The latter one is also illustrated in Figure~\ref{fig:cpu_nonnested} 
for $\ell = 7$ and $\ell = 8$ corresponding to $16,974,593$ and $135,005,697$ Dofs,
respectively. The largest problem with $135,005,697$ Dofs can be solved in $6.4$ seconds 
using $512$ cores. 
%\huidong{
%Such similar 
Similar scaling behaviors are also observed for the nested iterations approach; 
see Table~\ref{tab:parallel_solver_nested} and Figure~\ref{fig:cpu_nested}.
%, respectively. 
The computational time in
seconds (s) using nested iterations is further reduced by a factor of about $7$ in
comparison with the non-nested iterations. 
Using $512$ cores, the largest problem with $135,005,697$ Dofs is solved in $1$ second. 
Finally, we made some performance tests for the
adaptive refinement using the nested iteration setting. 
The results are given in Table~\ref{tab:parallel_solver_adaptive_nested}. 
We observe relatively good scaling in this case as well. 
Here, we have used the non-conforming simplicial complex and load balance from the open source MFEM.
%}

%\red{
We note that we used different computers and different codes for the single-core and 
parallel computations. More precisely, we used the shared-memory computer 
MACH2\footnote{https://www3.risc.jku.at/projects/mach2/},
%MACH2\footnote{https://de.wikipedia.org/wiki/MACH-2}
that provides a big memory,
and the distributed-memory computer RADON1\footnote{https://www.oeaw.ac.at/ricam/hpc} 
for the single-core and 
parallel computations, respectively. 
%\huidong{The iteration numbers obtained from the sequential and parallel codes are slightly different, 
%but stay in a very similar range.}
%}

\begin{table}
  {\small
    \begin{tabular}{|c|c|c|c|c|c|c|}
    \hline
    \multirow{2}{*}{$\ell$}&\multicolumn{6}{c|}{\#Cores}\\  \cline{2-7}
    &16&32&64&128&256&512\\
    \hline
    4&$85$ ($4.0$e$-2$)&-&-&-&-&-\\ 
    5&$84$ ($3.4$e$-1$)&$84$ ($1.6$e$-1$)&$84$ ($6.4$e$-2$)&-&-&-\\
    6&$82$ ($2.9$e$-0$)&$82$ ($1.4$e$-0$)&$82$ ($7.3$e$-1$)&$82$ ($3.7$e$-1$)&$82$ ($1.8$e$-1$)&$82$ ($8.0$e$-2$)\\
    7&$80$ ($2.5$e$+1$)&$80$ ($1.2$e$+1$)&$80$ ($6.3$e$-0$)&$80$ ($3.0$e$-0$)&$80$ ($1.5$e$-0$)&$80$ ($8.3$e$-1$)\\
    8&-&-&$77$ ($5.1$e$+1$)&$77$ ($2.5$e$+1$)&$77$ ($1.3$e$+1$)& $77$ ($6.4$e$-0$) \\ \hline
    \end{tabular}
    \caption{Parallel performance on a distributed computer system for uniform refinement and non-nested iterations.
    %\red{in comparison with performance on single core with large shared memory in Table \ref{tab:solver}.}
    }
    \label{tab:parallel_solver_nonnested}
  }
\end{table}
\begin{table}
  {\small
    \begin{tabular}{|c|c|c|c|c|c|c|c|}
      \hline\multirow{2}{*}{$\ell$}&\multicolumn{6}{c|}{\#Cores}\\  \cline{2-7}
      &16&32&64&128&256&512\\
      \hline
      4&$9$ ($5.8$e$-3$)&-&-&-&-&-\\ 
      5&$9$ ($5.0$e$-2 $)&$9$ ($2.7$e$-2 $)&$9$ ($1.1$e$-2$)&-&-&-\\
      6&$8$ ($3.5$e$-1$)&$8$ ($1.8$e$-1$)&$8$ ($9.4$e$-2$)&$8$ ($5.2$e$-2$)&$8$ ($2.8$e$-2$)&$8$ ($1.2$e$-2$)\\
      7&$9$ ($3.1$e$-0$)&$9$ ($1.6$e$-0$)&$9$ ($8.1$e$-1$)&$9$ ($4.2$e$-1$)&$9$ ($2.2$e$-1$)&$9$ ($1.2$e$-1$)\\
      8&-&-&$11$ ($7.9$e$-0$)&$11$ ($4.0$e$-0$)&$11$ ($2.0$e$-0$)&$11$ ($1.0$e$-0$) \\ \hline
    \end{tabular}
    \caption{Parallel performance on a distributed computer system for uniform refinement and nested iterations.}
    \label{tab:parallel_solver_nested}
  }
\end{table}
\begin{figure}
  \centering
    \includegraphics[width=1.0\textwidth]{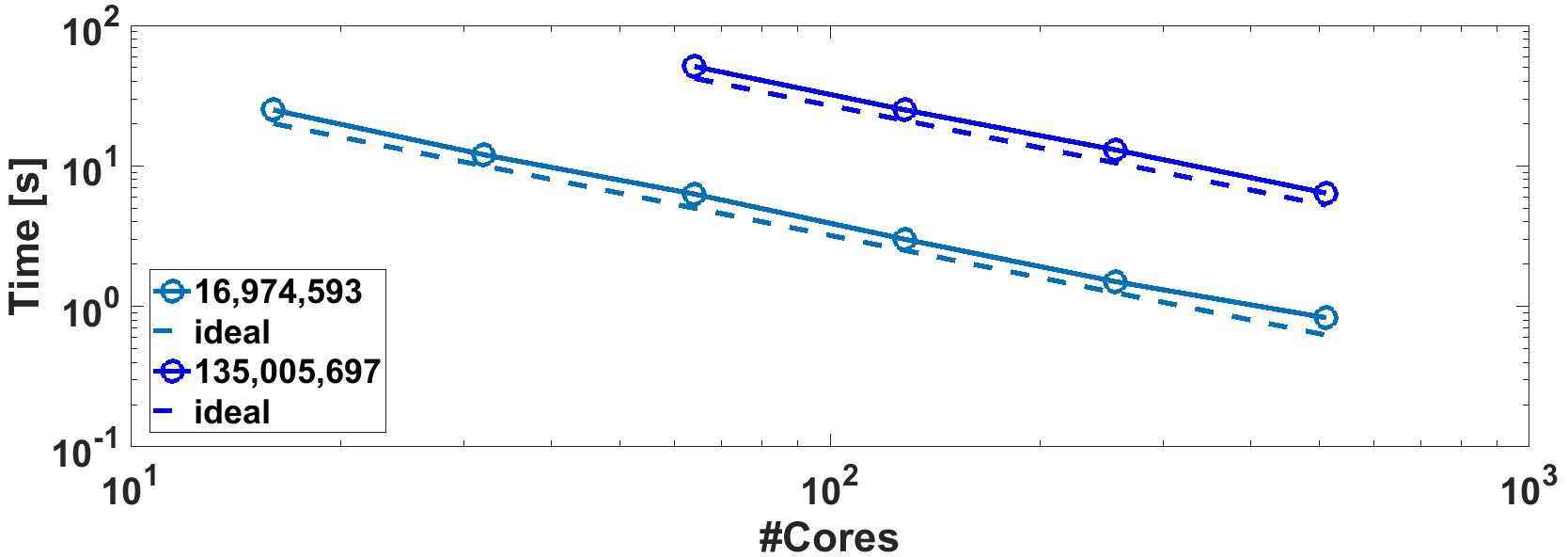}
    \caption{Strong scalability and computational time in seconds (s) with respect to
      the number of cores for uniform refinement and non-nested iterations}
    \label{fig:cpu_nonnested}
\end{figure}
\begin{figure}
  \centering
    \includegraphics[width=1.0\textwidth]{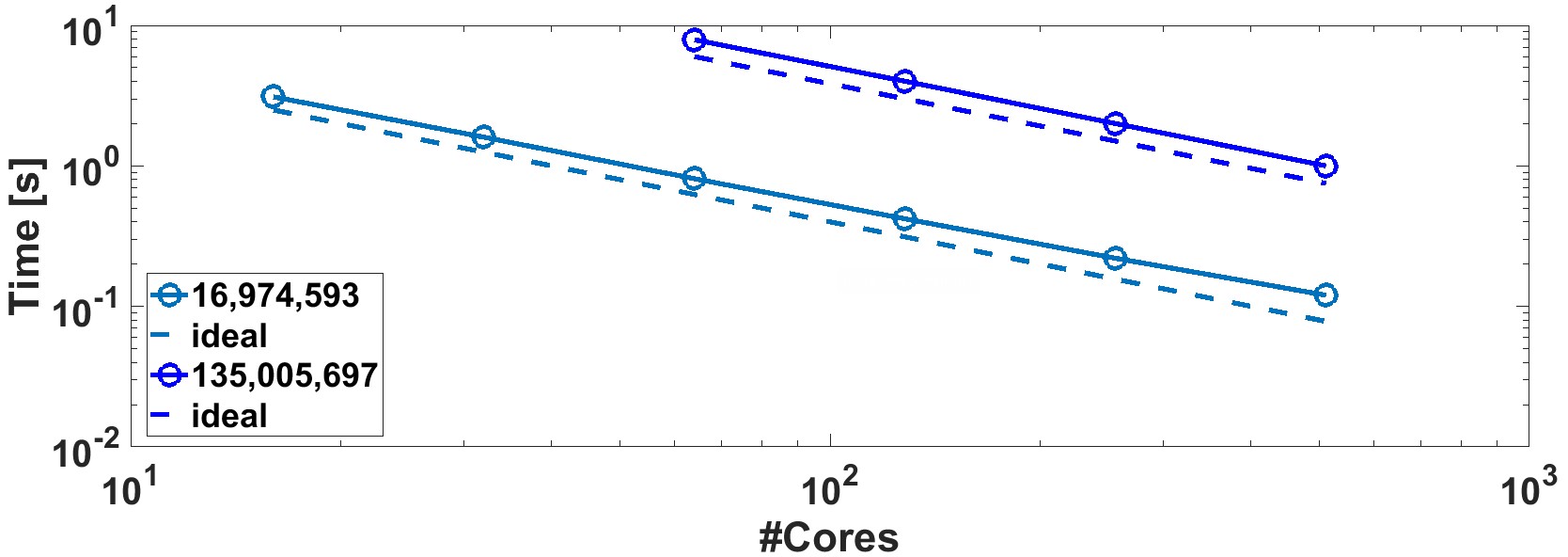}
    \caption{Strong scalability and computational time in seconds (s) with respect to
      the number of cores for uniform refinement and nested iterations}
    \label{fig:cpu_nested}
\end{figure}

\begin{table}
  {\small
    \begin{tabular}{|c|c|c|c|c|c|}
    \hline
    \multirow{2}{*}{\#Dofs}&\multicolumn{5}{c|}{\#Cores}\\  \cline{2-6}
    &16&32&64&128&256\\
    \hline
    $2.76154$e+$6$&$16$ ($1.0$e$-0$)&$16$ ($5.3$e$-1$)&16 ($2.8$e$-1$)&16 ($1.6$e$-1$)&16 ($1.0$e$-1$)\\ \hline
    $1.06728$e+$7$&-&17 ($2.3$e$-0$)&17 ($1.2$e$-0$)&16 ($6.2$e$-1$)&17 ($3.3$e$-1$)\\ \hline
    \end{tabular}
    \caption{Parallel performance on a distributed computer system for adaptive refinement and nested iterations.
    %\red{in comparison with performance on single core with large shared memory in Table \ref{tab:solver}.}
    }
    \label{tab:parallel_solver_adaptive_nested}
  }
\end{table}
%
%%%%%%%%%%%%%%%%%%%%%%%%%%%%%%%%%%%%%%%%%%%%%%%%%%%%%%%%%%%%%%%%%%%%%%%%%%%%%%%%%%%%%
%
\section{Conclusions and Outlook}
\label{LLSY:sec:Conclusions}	
%	
%
%\uli{ 
We provide a rigorous analysis of the discretization error $\|y_d - {\hat y}_{\varrho h}\|_{L_2(\Omega)}$
when replacing the mass matrix $M_h$ arising from the regularization term 
in the reduced optimality system by its lumped version $D_h = \text{lump}(M_h)$.
It turns out that the 
asymptotic behavior of the 
error is not affected by mass lumping when using affine-linear finite elements.
%,and 
More precisely, we again get the upper bound $c h^s \|y_d\|_{H^s(\Omega)}$, $s\in [0,2]$, for the choice $\varrho = h^4$ 
that provides the optimal balance  between the regularization parameter $\varrho$ 
and the mesh-size $h$.
Moreover, this replacement of $M_h$ by $D_h$ opens 
the way to reduce the discrete reduced optimal optimality system further to a spd 
Schur complement problem that can efficiently be solved by PCG 
since now the matrix-by-vector multiplication is cheap and, surprisingly,
$D_h$ is a  diagonal preconditioner that is spectrally equivalent to the Schur complement $S_h$.
This PCG can 
%easily 
efficiently
be parallelized as the numerical results show.
These findings provide the perfect ingredients for a nested PCG iteration 
%that produces 
producing 
iterates $y_\ell^{k_\ell}$ that differ from the desired state $y_d$ in the order $O(h_\ell^s)$ 
of the discretization error in asymptotically optimal complexity $O(h_\ell^{-d})$.
The nested iteration process will be stopped when some relative accuracy $\varepsilon \in (0,1)$ 
of the error is reached, or the cost we are willing to pay in terms of 
the control energy density $\|u_L\|^2_{L_2(\Omega)}$ becomes too large.
In this case, we can freeze the regularization (cost) parameter $\varrho_L = h_L^4$,
and continue the nested iteration process with mesh refinement only 
in order to improve to the approximation of the control.

We provide not only numerical results for the case of uniform refinement 
that nicely demonstrated the theoretical predictions 
but also for adaptive refinement when using variable regularization.
The numerical results show that this adaptive approach works well,
but a rigorous numerical analysis is still missing.
Further investigation comprises this analysis, and the generalization 
to larger classes of PDEs like elliptic diffusion-convection-reaction,
parabolic and hyperbolic state equations. 
We refer to  
%\red{\bf [REF]} 
\cite{LLSY:LangerSteinbachTroeltzschYang:2020SISC,LLSY:LangerSteinbachTroeltzschYang:2020SINUM,LLSY:LangerSteinbachYang:2022arXiv} 
and \cite{LLSY:LoescherSteinbach:2022arXiv:2211.02562} when using space-time fe discretization 
for parabolic and hyperbolic initial-boundary value problems, respectively.
Another future research topic are the consideration of control and state 
(box)
constraints 
%in form of box constraints 
in the framework discussed here;
see 
%\red{\bf [REF]} 
\cite{LLSY:GanglLoescherSteinbach:2023InPreparation}
for first results.
Finally, we mention that singular-perturbed problems 
%like 
as
discussed here
also appear  in fluid mechanics where they are known as 
(discrete) differential filter that provide 
approximate deconvolution models of turbulence
\cite{LLSY:BerselliIliescuLayton:2006Book,LLSY:LaytonRebholz:2012:Book,LLSY:John:2016Book}.
%}

%%%%%%%%%%%%%%%%%%%%%%%%%%%%%%%%%%%%%%%%%%%%%%%%%%%%%%%%%%%%%%%%%%%%%%%%%%%%%%%
	
	\bibliography{LLSY2023MassLumping}
	\bibliographystyle{abbrv} 
	
%%%%%%%%%%%%%%%%%%%%%%%%%%%%%%%%%%%%%%%%%%%%%%%%%%%%%%%%%%%%%%%%%%%%%%%%%%%%%% 

\end{document}